\documentclass[%
 aip,
 amsmath,amssymb,
preprint,%
]{revtex4-1}

\usepackage{graphicx}% Include figure files
\usepackage{dcolumn}% Align table columns on decimal point
\usepackage{bm}% bold math
\usepackage[utf8]{inputenc}
\usepackage[T1]{fontenc}
\usepackage{mathptmx}
\usepackage{etoolbox}

\newtheorem{theorem}{Theorem}
\newtheorem{lemma}{Lemma}[section]
\newtheorem{propos}{Proposition}
\newtheorem{cor}{Corollary}

\newtheorem{definition}{Definition}
\newtheorem{remark}{Remark}
\newenvironment{proof}{{\bf Proof:}}
{\hfill $\diamond$\medskip}

\makeatletter
\def\@email#1#2{%
 \endgroup
 \patchcmd{\titleblock@produce}
  {\frontmatter@RRAPformat}
  {\frontmatter@RRAPformat{\produce@RRAP{*#1\href{mailto:#2}{#2}}}\frontmatter@RRAPformat}
  {}{}
}%
\makeatother
\begin{document}

%\preprint{AIP/123-QED}

\title[Three heteroclinic orbits induce a countable family of equivalence classes of regular flows]{Three heteroclinic orbits induce a countable family of equivalence classes of regular flows}
% Force line breaks with \\
\author{E. Gurevich}
 \affiliation{NRU HSE, Nizhny Novgorod}%Lines break automatically or can be forced with \\
\email{egurevich@hse.ru}
\date{\today}% It is always \today, today,
             %  but any date may be explicitly specified

\begin{abstract}
We solve the problem of topological classification for smooth structurally stable flows on closed four-dimensional manifolds, the non-wandering set of which  contains exactly two saddle equilibria, and the wandering set contains isolated trajectories connecting these saddle equilibria (heteroclinic curves). In particular, we show that for a flow of the class under consideration on $\mathbb{CP}^2$, the number of heteroclinic curves is a complete topological invariant, while on  the sphere $\mathbb S^4$,  there exists a countably many  equivalence classes with  an arbitrary odd number $\gamma\geq 3$ of heteroclinic curves. These results contrast with a  three-dimensional case, where under similar conditions there exists only finite set of equivalence classes for each number of heteroclinic curves. 
\end{abstract}

\maketitle

\begin{quotation}
The problem of topological classification of gradient-like flows, that are structurally stable flows with finite nonwandering set,  has a long and rich history, originating in the classical works of A. A. Andronov, L. S. Pontryagin, E. A. Leontovich, and A. G. Mayer~[\onlinecite{AP37,LM37,LM55}]. For gradient-like flows on closed manifolds of dimensions 2 and 3, this problem was completely solved in~[\onlinecite{Pe, Um90}]. For the  dimension of the  ambient  manifolds equal to  four  and higher, the most complete results have been obtained only for gradient-like flows without  heteroclinic  trajectories, connecting saddle equilibria  (see~[\onlinecite{Pil, MeZh13, MeZh16, GrGu22, GrGu23, GuSa24, GuSa25, GuSa26}]). The goal of this paper is to study the effects of  heteroclinic trajectories on the asymptotic behavior of gradient-like flows trajectories on four-dimensional manifolds. 
\end{quotation}

\section{Introduction\label{intro}}

Let  $M^n$ denote a smooth closed manifold of dimension~$n\geq 1$. Recall that an equilibrium state $p$ of a smooth flow $f^t$ on $M^n$ is called {\it hyperbolic} if the Jacobi matrix $$J_p=(\frac{\partial F}{\partial x})|_{_p}$$ of the   velocity field $F=\frac{\partial f^t(x)}{\partial t}|_{_{t=0}}$ at the point $p$ has no eigenvalues with zero real part. The sets $$W^s_p=\{x\in M^n: \lim\limits_{t\to +\infty} f^{t}(x)= p\},$$ $$W^u_p=\{x\in M^n: \lim\limits_{t\to +\infty} f^{-t}(x)= p\}$$ are called {\it stable and unstable invariant manifolds of the equilibrium state $p$}, respectively. The number $\mu_p$ equal to the dimension $\dim W^u_{p}$ of the unstable manifold $W^u_p$ is called {\it the  Morse index of $p$}. If  $i_p=0(n)$ then $p$ is called {\it a sink (a source, respectively)}, and  if $i_p\in (0,n)$, $p$ is called {\it a saddle}. 

A smooth flow $f^t$ on $M^n$ is called {\it gradient-like} if its nonwandering set is finite and consists of hyperbolic equilibria, and the invariant manifolds of the equilibria intersect each other  transversely. If $p,q$ are saddle equilibria such that $W^s_p\cup W^u_q\neq \emptyset$, we will call the intersection $W^s_p\cup W^u_q$ and any trajectory in it  {\it heteroclinic}.  

Let $G_{\mu,\nu,k}$ denote the class of gradient-like flows on closed four-dimensional manifolds whose set of saddle equilibria consists of exactly two points with Morse indices $\mu, \nu\in \{1,2,3\}$, $\mu\leq \nu$. The number $k$ denotes the total number of equilibria of such flows. The following theorem describes all possible combinations of the numbers $\mu, \nu, k$ and the topology of the manifold $M^4$ admitting flows from the corresponding class.

\begin{theorem} \label{top} Let $f^t\in G_{\mu,\nu,k}$. Then exactly one of the following implications holds.

\begin{enumerate}\itemsep=-2pt
\item $k=6, \mu, \nu\in \{1,3\}$ or $k=4$, $\mu\in \{1,2\}, \nu=\mu+1$, and $M^4$ is homeomorphic to the sphere $\mathbb S^4$.

\item $k=5$, $\mu\in \{1,2\}, \nu=\mu+1$, and $M^4$ is homeomorphic to the complex projective plane $\mathbb{CP}^2$.

\item $k=4, \mu=1, \nu=3$, and $M^4$ is homeomorphic to either the direct product $\mathbb S^3\times \mathbb S^1$, or to the unique non-orientable $\mathbb S^3$-bundle over $\mathbb S^1$.
\item $k=4, \mu=\nu=2$, and $M^4$ is homeomorphic to one of the following manifolds: the connected sum $\mathbb{CP}^2\sharp \mathbb{CP}^2$, the direct product $\mathbb S^2\times \mathbb S^2$, or a non-trivial $\mathbb S^2$-bundle over $\mathbb S^2$.

\end{enumerate}
\end{theorem}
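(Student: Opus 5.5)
The approach is to combine Morse-inequality counting with a handle-decomposition description of $M^4$ coming from the flow. First I will fix the number of equilibria. Let $s$ and $\omega$ be the numbers of sources and sinks. A gradient-like flow admits a self-indexing Morse (Lyapunov) function, by Smale/Meyer. Hence $M^4$ has a handle decomposition with $s$ handles of index $4$, $\omega$ handles of index $0$, and one handle each of indices $\mu$ and $\nu$.

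Connectedness of $M^4$ forces the following constraints. For $M^4$ to be connected, the $0$-handles must be joined by $1$-handles, so $\omega-1\le \#\{\text{saddles of index }1\}$. Dually (reversing the flow), $s-1\le \#\{\text{saddles of index }3\}$. Since the Euler characteristic satisfies $\chi(M^4)=\omega+s+(-1)^\mu+(-1)^\nu$, this gives a short list of cases according to $(\mu,\nu)$:
\begin{itemize}
\item $\mu=\nu=1$: we get $\omega\le 3$ and $s=1$. But a $1$-handle attached with both feet on the same $0$-handle produces $H_1\ne0$ with no $3$-handle to cancel it dually, which is a contradiction. So both $1$-handles must connect three $0$-handles, giving $k=6$. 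The case $\nu=3$ is similar, with $\omega\le2$ and $s\le2$.
\item $\mu=1,\nu=3$: the $1$-handle either joins two sinks (and then, by duality, the $3$-handle joins two sources), giving $k=6$ and $M=\mathbb S^4$; or it is a self-attachment, giving $k=4$.
\item $\mu=1,\nu=2$, or dually $\mu=2,\nu=3$: we get $s=1$ and $\omega\in\{1,2\}$.
\item $\mu=\nu=2$: here $\omega=s=1$ and $k=4$.
\end{itemize}

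Next I will identify the manifold in each case.

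\textbf{Case $k=6$ or cancelling pairs.} When the $1$-handle joins two $0$-handles, it cancels topologically and the union is a $4$-ball. If all saddles cancel this way, $M^4$ is a union of two balls and hence $\mathbb S^4$ by the Alexander trick/Brown's theorem.

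\textbf{Case $\mu=1,\nu=2$ with $k=4$.} Here $W^u$ of the index-$1$ saddle must meet $W^s$ of the index-$2$ saddle in a single heteroclinic curve, so the handles cancel homotopically. To conclude $M\cong\mathbb S^4$, I will argue instead via the sinks and sources. The set $\text{cl}(W^u_\sigma)$ of the $1$-saddle with the sink forms an arc, and a tubular neighbourhood of it is a ball $B$. Similarly, the closure of the stable manifold of the $2$-saddle together with the source forms a cell. The complement is then a ball, giving $\mathbb S^4$ topologically; this uses the same kind of argument as Theorem 1 of the cited Grines--Gurevich papers.

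\textbf{Case $k=5$, $\mu=1,\nu=2$.} Here $\omega=2$, so the $1$-handle connects the two sinks and cancels. We are left with one $0$-handle, one $2$-handle and one $4$-handle. The $2$-handle is attached along a knot in $\mathbb S^3$ whose surgered boundary must be $\mathbb S^3$ in order to be capped by a $4$-ball. By Gordon--Luecke, the knot is the unknot with framing $\pm1$. Therefore $M\cong\pm\mathbb{CP}^2$, which is homeomorphic to $\mathbb{CP}^2$. The case $\mu=2,\nu=3$ follows by reversing time.

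\textbf{Case $\mu=1,\nu=3$ with $k=4$.} The $1$-handle is self-attached, so $M$ is $(\mathbb S^1\times D^3)\cup(\mathbb S^1\times D^3)$, glued along $\mathbb S^1\times\mathbb S^2$ or its twisted version. The closures of the separatrices give circles. Using the fact that self-diffeomorphisms of $\mathbb S^1\times\mathbb S^2$ extend over $\mathbb S^1\times D^3$ (or $\mathbb S^1\tilde\times D^3$), I get $\mathbb S^3\times\mathbb S^1$ or the nonorientable bundle.

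\textbf{Case $\mu=\nu=2$.} $M$ is a $0$-handle plus two $2$-handles attached along a $2$-component link $L$, with surgery yielding $\mathbb S^3$, plus a $4$-handle. There are no heteroclinic intersections here, since the saddles have equal index and transversality in dimension $4$ forbids them. The unstable $2$-spheres of the two saddles together with the sink are therefore disjoint embedded spheres. Hence $M$ is simply connected with $H_2=\mathbb Z^2$, and its intersection form is unimodular of rank $2$. By Freedman, the homeomorphism type is determined by the form and the Kirby--Siebenmann invariant, which vanishes since $M$ is smooth. The possible forms are $\mathrm{diag}(\pm1,\pm1)$ or $H$, and $H$ (even) arises for both possible Kirby classes, giving $\mathbb{CP}^2\sharp\mathbb{CP}^2$ (up to orientation), $\mathbb S^2\times\mathbb S^2$, or the nontrivial bundle $\mathbb{CP}^2\sharp\overline{\mathbb{CP}^2}$.

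\textbf{Main obstacle.} The step I expect to be hardest is excluding the spurious cases and nailing the $k=4$, $\nu=\mu+1$ case to $\mathbb S^4$ without smooth $4$-dimensional Poincaré-conjecture issues. My first try is to show $M$ is a homotopy sphere via the handle counts (homology vanishes since the handles cancel algebraically, and $\pi_1$ is trivial), and then invoke Freedman to get homeomorphism with $\mathbb S^4$; this is why the statement is only up to homeomorphism. Mutual exclusivity of the four alternatives is immediate from the distinct $(\mu,\nu,k)$ data.
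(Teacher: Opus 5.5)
\textbf{Verdict: there is a genuine gap, in the case $k=4$, $\nu=\mu+1$.} The argument you wrote for this case would fail, for three reasons.
\begin{itemize}
\item Heteroclinic curves lie in $W^u_{\sigma_2}\cap W^s_{\sigma_1}$. The intersection you name, $W^u_{\sigma_1}\cap W^s_{\sigma_2}$, is empty by transversality, since $1+2<4$.
\item There is not a single such curve. Only the algebraic intersection number of $\lambda_u$ and $\Lambda_s$ is $\pm1$; the geometric number can be any odd number. Realizing $2\gamma+1\ge 3$ curves is the whole point of Theorems~\ref{clas} and~\ref{realiz}.
\item The sink--source picture is wrong. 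With a single sink, both separatrices of $\sigma_1$ end there, so ${\rm cl}\,W^u_{\sigma_1}$ is a circle whose neighbourhood is $\mathbb S^1\times\mathbb B^3$, not an arc with a ball neighbourhood. Likewise ${\rm cl}\,W^s_{\sigma_2}$ is a $2$-sphere, not a cell. The cross-section $\Sigma\cong\mathbb S^2\times\mathbb S^1$ actually cuts $M^4$ into a copy of $\mathbb S^1\times\mathbb B^3$ and a $2$-handle attached to a ball, and neither piece is a ball. So your conclusion that the complement is a ball cannot be reached this way.
\end{itemize}

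\textbf{The fix.} Your fallback in the last paragraph (show $M^4$ is a homotopy sphere, then apply Freedman) is the right route and is exactly the paper's. However, its key input, $\pi_1(M^4)=1$, is only asserted. The algebraic cancellation you also cite was justified only by the false single-curve claim. The missing step is to reverse time. With no saddle of index $3$ and a unique source, $4-\varphi$ has one minimum and no critical points of index $1$, so $\pi_1(M^4)=1$. Then:
\begin{itemize}
\item $H_1=0$, and $H_3=0$ by Poincar\'e duality;
\item $\chi(M^4)=1-1+1+1=2$ forces $\beta_2=0$;
\item $H_2=\ker\partial_2\subset\mathbb Z$ is free, hence zero.
\end{itemize}
So $M^4$ is a homotopy sphere, and $\lambda_u\cdot\Lambda_s=\pm1$ comes out as a consequence rather than an input.

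\textbf{Where your route differs usefully from the paper.} With this repair the rest of your plan stands, and it improves on the paper in two places.
\begin{itemize}
\item The handle-connectivity bound $\omega-1\le\#\{\text{index-}1\text{ saddles}\}$ and its dual replace the paper's Hurewicz--Wallman separation argument for counting sinks and sources.
\item For $k=5$, cancelling the $1$-handle against one $0$-handle and applying Gordon--Luecke to the remaining $2$-handle gives a diffeomorphism onto $\pm\mathbb{CP}^2$. The paper instead computes $\beta_2=1$ and quotes Freedman.
\end{itemize}

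\textbf{Two smaller slips}, neither of which changes your final list:
\begin{itemize}
\item For $\mu=\nu=2$, the spheres ${\rm cl}\,W^u_{\sigma_i}$ both pass through the unique sink, so they are not disjoint. Simple connectivity and $H_2\cong\mathbb Z^2$ follow directly from the absence of $1$- and $3$-handles.
\item For the even form $H$, the Kirby--Siebenmann invariant is determined by the signature. Smoothness is therefore needed only to exclude the non-smoothable manifolds with odd forms.
\end{itemize}
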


For $\mu=\nu$, any flow from the class under consideration has no heteroclinic intersections. Each of the classes described in Theorem~\ref{top}, except $G_{1,2,4}\cup G_{2,3,4}$, contains flows without heteroclinic intersections, the topological classification of which follows from~[\onlinecite{Pil,GrGu23, GrGu22, GuSa24}]. Below, in Theorems~\ref{clas+},~\ref{clas}, and \ref{realiz}, we obtain necessary and sufficient conditions for the topological equivalence of flows from the classes $G_{1,2,k}$ and describe an algorithm for constructing a representative of each  class of topological  equivalence. The topological classification of flows from classes $G_{2,3,k}$ reduces to theorems~\ref{clas+}, \ref{clas} by time reversing. In other cases, the problem of topological classification can be solved using the same methods and is of no particular interest.

\begin{theorem}\label{clas+} If the wandering set of a flow $f^t\in G_{1,2,5}$ contains a heteroclinic intersection, then it consists of an even number of heteroclinic trajectories. Flows $f^t, {f'}^t\in G_{1,2,5}$ are topologically equivalent if and only if they either have no heteroclinic intersections or have the same number of heteroclinic trajectories. For each number $\gamma\geq 0$, there exists a flow $f^t\in G_{1,2,5}$ that has exactly $2\gamma$ heteroclinic trajectories.
\end{theorem}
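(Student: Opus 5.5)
Throughout, let $f^t\in G_{1,2,5}$, so $M^4\cong\mathbb{CP}^2$ by Theorem~\ref{top}. The equilibria are a source $\alpha$, a saddle $\sigma_1$ of index $1$, a saddle $\sigma_2$ of index $2$, and two sinks. No point has index $4$ besides $\alpha$, and $\sigma_1$ has index $1$; hence the closure of $W^u_{\sigma_1}$ is an arc joining $\sigma_1$ to two sinks. Dimension counting ($\dim W^u_{\sigma_2}=2$ with $k=5$) forces a unique sink $\omega$ with $W^u_{\sigma_2}\setminus W^s_{\sigma_1}\subset W^s_\omega$. I would first show that the second sink $\omega'$ attracts only one branch of $W^u_{\sigma_1}$ and that $\mathrm{cl}(W^s_{\omega'})$ is a $4$-ball meeting the rest only along that branch. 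Everything then reduces to the structure near $\sigma_1,\sigma_2,\omega$.

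\textbf{Step 1: reduction to a linking problem.} Take a small $3$-sphere $S_\omega$ around $\omega$ transversal to the flow. The traces $\lambda_1=W^u_{\sigma_1}\cap S_\omega$ (one point, or two if both branches go to $\omega$) and $\lambda_2=W^u_{\sigma_2}\cap S_\omega$ are relevant. Dually, take a $3$-sphere $S_\alpha$ near $\alpha$, on which $W^s_{\sigma_1}$ traces a $2$-sphere $c_1$ and $W^s_{\sigma_2}$ traces a knot $c_2$. Heteroclinic curves are exactly $W^u_{\sigma_1}\cap W^s_{\sigma_2}$, a transverse intersection of a $1$-manifold with a $2$-manifold in $M^4$, hence a finite set of trajectories. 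I would pass to the intermediate level surface $\Sigma=\partial(\text{neighbourhood of } \mathrm{cl}W^u_\alpha\text{-part up to }\sigma_1)$, which is $S^2\times S^1$ or $S^3$ after surgery. On $\Sigma$, $W^u_{\sigma_1}$ gives two points and $W^s_{\sigma_2}$ gives a knot $K$. The heteroclinic trajectories become the intersection points of $K$ with the belt sphere $S^2\times\{pt\}$ of the $1$-handle. Since $M^4\cong\mathbb{CP}^2$ has $H_1=0$, the $2$-handle attached along $K$ must cancel $H_1$. Hence $K$ has algebraic intersection number $\pm1$ with the belt sphere, and the total geometric count is $2\gamma+1$ or, after accounting for orientations and the second sink branch, even. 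I need to fix this parity bookkeeping carefully: the claimed parity is even, so I expect the relevant object is the pair of unstable separatrices of $\sigma_1$ both crossing $W^s_{\sigma_2}$, each contributing $\gamma$. This symmetric contribution is what produces $2\gamma$.

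\textbf{Step 2: invariant and classification.} I would show that the topological class is determined by the isotopy class of $K$ (with framing) in $\Sigma$ relative to the belt sphere. The key claim is that, because $M^4\cong\mathbb{CP}^2$, the framing is $\pm1$ and $K$ becomes isotopic in $S^3$ to an unknot once we pass to the complement of the $1$-handle. Also, any two such configurations with equal geometric intersection number are related by a homeomorphism of $\Sigma$ preserving the belt sphere, using isotopies of $S^2\times S^1$ (sliding arcs around $S^1$). Given such a conjugacy of the traces, I would build the equivalence by the standard technique from [Pil, GrGu22]: extend a homeomorphism on the characteristic spheres to neighbourhoods of saddles via the local topological conjugacy (Grobman–Hartman plus compatible foliations), then extend along trajectories.

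\textbf{Step 3: realization.} For each $\gamma$, I would take a Morse function on $\mathbb{CP}^2$ with handles $0,1,2$ plus a cancelling-type $0$-handle for the extra sink, reversed to give the indices needed. The $2$-handle's attaching knot is chosen to wind through the $1$-handle $\gamma$ times in each direction (a Whitehead-type curve), then $W^u_{\sigma_1}$ is perturbed so that intersections are transverse. The main obstacle is Step 2: proving that the geometric intersection count alone determines $K$ up to the relevant homeomorphism. A priori, knotting or twisting of $K$ could give further invariants. This would contrast with $\mathbb S^4$ in Theorem~\ref{clas}. Here I would first try to exploit that in $\mathbb{CP}^2$ the attaching data is forced to a unique normal form by the Gluck/Cerf-type uniqueness of handle decompositions of $\mathbb{CP}^2$ with these indices.
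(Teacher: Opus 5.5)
Your overall plan matches the paper's: data on a level between the saddles, extension of a homeomorphism of that data to a conjugacy through canonical neighbourhoods, and realization by handles. The argument has two genuine gaps, however.

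The first is in the dynamics, and it breaks your parity count. The energy function decreases along orbits, so heteroclinic curves run from $\sigma_2$ to $\sigma_1$ and form $W^u_{\sigma_2}\cap W^s_{\sigma_1}$ (dimensions $2+3=5$). The set $W^u_{\sigma_1}\cap W^s_{\sigma_2}$ you use is empty; a transverse intersection of a $1$- and a $2$-manifold in $M^4$ is empty anyway. No level set meets both $W^u_{\sigma_1}$ and $W^s_{\sigma_2}$. On $\Sigma=\varphi^{-1}(c_2)$, $1<c_2<2$, one sees $\Lambda_s=\Sigma\cap W^s_{\sigma_1}$ and $\lambda_u=\Sigma\cap W^u_{\sigma_2}$. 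For $k=5$, $\Sigma$ bounds two $0$-handles joined by a $1$-handle, so $\Sigma\cong\mathbb S^3$, and the belt sphere $\Lambda_s$ separates it into pieces of the two basins. Several of your claims fail as a result:
(i) once a heteroclinic curve exists, $W^u_{\sigma_2}$ minus those curves meets both basins, contrary to your ``unique sink $\omega$'';
(ii) the $H_1$-cancellation argument with algebraic intersection $\pm1$ is the $G_{1,2,4}$ picture and gives an odd count;
(iii) the mechanism of ``two separatrices of $\sigma_1$ each contributing $\gamma$'' does not exist.
The correct reason for evenness is that $\Lambda_s$ is null-homologous in $\Sigma\cong\mathbb S^3$, so its algebraic intersection with the closed curve $\lambda_u$ is zero (Corollary~\ref{nmb_of_het}). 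Likewise, Step~3 needs an unknot in $\mathbb S^3$ crossing the separating sphere $2\gamma$ times, attached with framing $\pm1$, not a curve winding through the $1$-handle.

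The second gap is Step~2, which you rightly call the main obstacle. It cannot be closed, because the claim is false for four or more heteroclinic curves. The complete invariant is the homeomorphism class of $(\Sigma,\Lambda_s,\lambda_u)$ (Theorem~\ref{cond}). Framings play no role, and unknottedness of $\lambda_u$ needs Gordon--Luecke, as in Lemma~\ref{seq1}. The number $b=|\Lambda_s\cap\lambda_u|$ does not determine this class. Here is a counterexample:
\begin{itemize}
\item Let $U\subset\mathbb S^3$ be an unknot and $\alpha$ an arc with $\partial\alpha\subset U$, ${\rm int}\,\alpha\cap U=\emptyset$, with a trefoil tied in it. Put $\Lambda=\partial N(\alpha)$.
\item Then $|\Lambda\cap U|=4$, and $U\cap N(\alpha)$ is a trivial tangle.
\item The double branched cover of the complementary tangle is the exterior of the preimage of $\alpha$ in the double cover of $\mathbb S^3$ branched along $U$. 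That preimage is a connected sum of two trefoils, so this tangle is not rational.
\item Hence $(\mathbb S^3,\Lambda,U)$ is not homeomorphic to an unknot in $2$-bridge position with respect to a sphere, where both tangles are rational.
\end{itemize}
By Lemma~\ref{rea} and the easy necessity half of Theorem~\ref{cond}, these two configurations give non-equivalent flows in $G_{1,2,5}$, each with four heteroclinic curves. Changing the knot tied in $\alpha$ gives infinitely many, and pushing extra fingers of $U$ across $\Lambda$ does the same for every $b\ge4$. So the knotting you worried about is real; it sits in the tangles, not in $\lambda_u$, and no Cerf-type uniqueness removes it.

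The paper closes this step with Lemma~\ref{cp2}. Its proof claims that triviality of $\lambda$ makes both tangle exteriors $R_\pm$ handlebodies, via the unjustified assertion that the $\mu_i$ form a trivial link. That fails in this example: the fundamental group of one exterior contains the trefoil group, which is not free. The classification claim therefore holds only for $0$ or $2$ heteroclinic curves. In those cases the unknot splits into two $1$-string tangles, and both must be trivial.
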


We show that for the class $G_{1,2,4}$ a similar result is true only if the number of heteroclinic curves  is equal to   one. Let us describe a topological invariant that determines a topological equivalence class of  a flow  $f^t\in G_{1,2,4}$. Due to Theorem~\ref{top}, the ambient manifold of $f^t$ is a topological sphere $S^4$.  Let $\sigma_1, \sigma_2$ be saddle equilibria of $f^t$ with Morse indices equal to 1 and 2, respectively, and $$\Sigma\subset S^4$$ be a smooth closed three-dimensional submanifold of the sphere $S^4$ that separates the sets $A={\rm cl}\,W^u_{\sigma_1}$ and $R={\rm cl}\,W^s_{\sigma_2}$,  and transversely intersects each trajectory belonging to  $S^4\setminus (A\cup R)$. We will call $\Sigma$ {\it a characteristic cross-section} of the flow $f^t$. Set $$\Lambda_s=\Sigma\cap W^s_{\sigma_1}, \lambda_u=\Sigma\cap W^u_{\sigma_2}.$$ 

We refer to  a  triple $\{\Sigma, \Lambda_s, \lambda_u\}$  as  {\it a scheme of a flow $f^t$}. We show in Lemma~\ref{seq1} that for any flow $f^t\in G_{1,2,4}$ the manifold $\Sigma$ is diffeomorphic to the direct product $\mathbb S^2\times \mathbb S^1$, $\Lambda_s$ is a homotopically  nontrivial two-dimensional sphere in $\Sigma$, and $\lambda_u$ is a trivial knot, i.e., a smoothly embedded simple closed curve such that $g(\lambda_u)=\{x\}\times S^1$ for some diffeomorphism $g: \Sigma\to S^2\times S^1$. 

\begin{theorem}\label{clas} Any flow $f^t\in G_{1,2,4}$ has a nonempty set of heteroclinic curves consisting of an odd number of curves. Flows $f^t, {f'}^t\in G_{1,2,4}$, having a unique heteroclinic trajectory, are topologically equivalent. Flows $f^t, {f'}^t\in G_{1,2,4}$ that have more than one heteroclinic trajectory are topologically equivalent if and only if there exists a homeomorphism $h: \Sigma\to \Sigma'$ such that $h(\Lambda_s)=\Lambda_s'$, $h(\lambda_u)=\lambda_u'$.
\end{theorem}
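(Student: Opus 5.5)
I will work entirely on the characteristic cross-section $\Sigma$ and reduce Theorem~\ref{clas} to statements about the scheme $\{\Sigma,\Lambda_s,\lambda_u\}$, using Lemma~\ref{seq1}: $\Sigma\cong\mathbb S^2\times\mathbb S^1$, $\Lambda_s$ is a homotopically nontrivial sphere, and $\lambda_u$ is a trivial knot, i.e.\ isotopic to $\{x\}\times\mathbb S^1$. The plan has three parts: a count via intersection numbers, a reduction of topological equivalence to conjugacy of schemes, and a separate uniqueness argument when there is one heteroclinic curve.

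\textbf{Parity.} Heteroclinic curves are exactly the trajectories through the points of $\Lambda_s\cap\lambda_u$. Transversality of $W^s_{\sigma_1}$ and $W^u_{\sigma_2}$ makes this a transverse intersection of a closed surface and a closed curve in $\Sigma$, hence a finite set. Since $\Lambda_s$ is homotopically nontrivial, it represents a generator of $H_2(\Sigma;\mathbb Z_2)\cong\mathbb Z_2$ (a degree-$\pm1$ sphere). The knot $\lambda_u$ is homotopic to $\{x\}\times\mathbb S^1$, so it generates $H_1(\Sigma;\mathbb Z_2)$. By Poincaré duality their $\mathbb Z_2$ intersection number is $1$. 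Hence $|\Lambda_s\cap\lambda_u|$ is odd, and in particular nonzero. This gives the first assertion of Theorem~\ref{clas}.

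\textbf{Reduction to schemes.} Necessity: a topological equivalence $H$ can be composed with a flow-time reparametrization so that it sends a characteristic cross-section of $f^t$ onto one of ${f'}^t$, since both are cross-sections of the wandering region between $A\cup R$ and $A'\cup R'$. It then maps the stable and unstable traces onto each other. Sufficiency is the standard gluing argument.
\begin{enumerate}
\item Let $h:\Sigma\to\Sigma'$ be a homeomorphism with $h(\Lambda_s)=\Lambda_s'$ and $h(\lambda_u)=\lambda_u'$.
\item Extend $h$ along trajectories to a conjugacy on $V=S^4\setminus(A\cup R)$.
\item Extend this conjugacy to neighbourhoods of $A$ and of $R$. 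Here $A=\mathrm{cl}\,W^u_{\sigma_1}$ is the attractor made of $\sigma_1$ and two sinks, and $R$ is the dual repeller. The neighbourhoods are $4$-balls foliated by the flow. Use the known local models near hyperbolic points (Grines–Gurevich–Pochinka-type constructions for flows without heteroclinic curves near an attractor).
\item Check compatibility with the traces. The intersection of $\Sigma$ with the basin of each sink component is a complementary piece of $\Lambda_s$, so $h$ respecting $\Lambda_s$ ensures the extension is consistent across the stable manifold of $\sigma_1$; likewise $\lambda_u$ controls consistency near $R$.
\end{enumerate}

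\textbf{Unique heteroclinic curve.} The main obstacle is showing that any two schemes with $|\Lambda_s\cap\lambda_u|=1$ are homeomorphic, and here I would lean on 3-manifold topology. A homotopically nontrivial embedded sphere in $\mathbb S^2\times\mathbb S^1$ is isotopic to a fibre $\mathbb S^2\times\{y\}$, by Laudenbach's uniqueness of essential spheres. Cutting $\Sigma$ along $\Lambda_s$ gives $\mathbb S^2\times[0,1]$, and $\lambda_u$ becomes a single arc joining the two boundary spheres. Because $\lambda_u$ is unknotted in $\Sigma$, this arc should be trivial, i.e.\ isotopic rel boundary to $\{x\}\times[0,1]$. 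I would prove this via the light-bulb trick: an arc in $\mathbb S^2\times I$ with one endpoint on each boundary component is always unknotted. That makes the conclusion independent of the global knot type, so every such scheme is homeomorphic to the model $(\mathbb S^2\times\mathbb S^1,\ \mathbb S^2\times\{y\},\ \{x\}\times\mathbb S^1)$. With more than one intersection point, the complement arcs can braid or link in $\mathbb S^2\times I$. Therefore only the stated criterion survives, and no simplification of the criterion is claimed.

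The step I expect to require the most care is the extension near $A$ and $R$ in the sufficiency direction, particularly matching the foliations near the heteroclinic curves themselves. I would handle it by first building a conjugacy in neighbourhoods of $W^s_{\sigma_1}\cup W^u_{\sigma_2}$ that are compatible along the heteroclinic curves, using linearization at $\sigma_1$ and $\sigma_2$. I would then fill in the remaining region by the trajectory-pushing extension.
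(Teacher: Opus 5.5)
Your parity argument is the paper's Corollary~\ref{nmb_of_het}, with $\mathbb Z_2$ in place of the integer intersection index. Your necessity argument is Proposition~\ref{indep}. Your one-point argument is a valid variant of Corollary~\ref{inters1}. The paper lifts to $\mathbb R^3\setminus\{O\}$ and uses Schoenflies plus the Alexander trick to show that $\Sigma\setminus{\rm int}(N_{\Lambda_s}\cup N_{\lambda_u})$ is a ball, which is in effect a proof of the light-bulb lemma you quote. You do need the isotopy of the arc in $\mathbb S^2\times[0,1]$ to be rel boundary, so that it descends through the regluing. The one-point argument, however, only yields equivalence of schemes. The second and third assertions rest on the sufficiency half of the paper's Theorem~\ref{cond}, and that is where your proposal has a genuine gap.

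\textbf{Steps 2--4 fail as written.} Pushing an arbitrary scheme homeomorphism $h$ along trajectories gives a conjugacy on $S^4\setminus(A\cup R)$. In general this conjugacy does not extend continuously to $W^u_{\sigma_1}$ or $W^s_{\sigma_2}$, and preserving $\Lambda_s$ and $\lambda_u$ setwise does not help. Take the linear model $\dot u=u$, $\dot v=-v$ at $\sigma_1$. The point $(u,v)=(s,x)$ of $\Sigma$, with $x\in\Lambda_s$ and $0<s<\varepsilon$, hits the transversal disc $\{u=1\}$ to $W^u_{\sigma_1}$ at time $-\ln s$, in the point $(1,sx)$. If $h(s,x)=(s^2,x)$ near $\Lambda_s$, the pushed map sends $(1,sx)$ to $(s,sx)$. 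This tends to $\sigma'_1$ rather than to a point of $W^u_{\sigma'_1}$.

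\textbf{What is missing.} You must first modify $h$, keeping $\Lambda_s$ and $\lambda_u$ invariant, until on tubular neighbourhoods $Q_1$ of $\Lambda_s$ and $P_2$ of $\lambda_u$ it coincides with the restrictions to $\Sigma$ of local conjugacies on canonical neighbourhoods of $\sigma_1,\sigma_2$. Since $Q_1\cap P_2$ consists of boxes around the heteroclinic points, this requires three things:
\begin{itemize}
\item the two local conjugacies must agree on $V_1\cap V_2$ (consistent canonical neighbourhoods, Propositions~\ref{Step1} and~\ref{local_conj});
\item $h$ must respect both product structures on the boxes;
\item $h$ must carry a longitude of the solid torus $P_2$ to the corresponding one. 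The local conjugacy on $P_2\cong\partial B^u_{\sigma_2,\sigma_2}\times B^s_{\sigma_2,\sigma_2}$ is a product map, and $h|_{\partial P_2}$ must be isotopic to it.
\end{itemize}
In the paper this is Proposition~\ref{Step0}, Lemma~\ref{dehn}, and the map $\theta=\Psi\eta$ of Proposition~\ref{Step4}. Your last paragraph names the right overall strategy but omits exactly this matching step, which is the actual content of sufficiency.

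\textbf{Wrong picture of $A$ and $R$.} What you describe is the $k=5$ case. For $k=4$ there is a single sink, and $A$ is a circle with neighbourhood $M_{c_2}\cong\mathbb S^1\times\mathbb B^3$. $R$ is a $2$-sphere with neighbourhood homeomorphic to $\mathbb S^2\times\mathbb B^2$, and $\Lambda_s$ does not separate $\Sigma$. So neither the $4$-ball models of your step~3 nor the basin argument of your step~4 applies.
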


\begin{theorem}\label{realiz} Let $\Lambda, \lambda \subset \mathbb S^2\times \mathbb S^1$ be  smoothly embedded homotopically nontrivial sphere and the trivial knot, respectively. Then there exists a flow $f^t\in G_{1,2,4}$ whose characteristic  cross-section $\Sigma$ is homeomorphic to $\mathbb S^2\times \mathbb S^1$ via a homeomorphism $h:\Sigma\to \mathbb S^2\times \mathbb S^1$ such that $h(\Lambda_s)=\Lambda$ and $h(\lambda_u)=\lambda$.

In particular, for every integer $\gamma \geq 1$, the class $G_{1,2,4}$ contains a countable set of topologically nonequivalent flows with exactly $(2\gamma+1)$ heteroclinic trajectories.
\end{theorem}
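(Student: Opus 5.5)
The plan is to realize the scheme by gluing standard pieces along the cross-section, and then to produce infinitely many pairwise inequivalent schemes with a fixed number of intersection points.

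\textbf{Realization.} Set $\Sigma=\mathbb S^2\times\mathbb S^1$ and build $M^4$ in three pieces.

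First, the attracting side. Let $A$ be the closure of $W^u_{\sigma_1}$, i.e. a circle through a sink $\omega$ and the saddle $\sigma_1$ of index 1. Take a neighborhood $N_A$ of $A$: it is a 4-ball with a 1-handle attached, so $\partial N_A\cong \mathbb S^2\times\mathbb S^1$. On $N_A$ put the model flow consisting of a standard sink together with an index-1 saddle of the handle. In this model $W^s_{\sigma_1}\cap\partial N_A$ is the belt sphere $\{pt\}\times\mathbb S^2$-type sphere of the handle, which is homotopically nontrivial.

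Second, the repelling side. Let $R$ be the closure of $W^s_{\sigma_2}$. It is a 2-sphere made of $W^s_{\sigma_2}$ (a 2-disk) closed up by a source $\alpha$. Its neighborhood $N_R$ is the disk bundle over $\mathbb S^2$. Because $M^4$ must be $\mathbb S^4$ (Theorem~\ref{top}), the Euler number is $0$, so $N_R=\mathbb S^2\times D^2$ and $\partial N_R\cong \mathbb S^2\times\mathbb S^1$. On $N_R$ put the model flow with a source and an index-2 saddle. There $W^u_{\sigma_2}\cap\partial N_R$ is the circle $\{x\}\times\mathbb S^1$, a trivial knot in the sense of the paper.

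Third, the gluing. By the classical uniqueness of the homotopically nontrivial sphere in $\mathbb S^2\times\mathbb S^1$ (Laudenbach) and of the trivial knot up to diffeomorphism, choose diffeomorphisms $\psi_A:\partial N_A\to \mathbb S^2\times \mathbb S^1$ and $\psi_R:\partial N_R\to\mathbb S^2\times\mathbb S^1$ that carry the model sphere to $\Lambda$ and the model circle to $\lambda$. Glue $N_A$ and $N_R$ along $\psi_A^{-1}\psi_R$ by a collar $\Sigma\times[-1,1]$ carrying the flow $\partial/\partial t$.

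\textbf{Identifying the result.} The glued manifold is $N_A\cup N_R$, i.e. the 4-ball with a 1-handle and a 2-handle attached. The 2-handle's attaching circle $\lambda$ goes over the 1-handle, with algebraic intersection $\pm1$ with $\Lambda$. I expect the main technical point to be checking that this algebraic intersection really is $\pm1$. The argument: $\lambda$ is isotopic to $\{x\}\times\mathbb S^1$, and $\Lambda$ is homologous to $\mathbb S^2\times\{y\}$, so their intersection number is $\pm1$.

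Given that, $N_A\cup N_R$ is homotopy equivalent to a ball, so its boundary is a homotopy 3-sphere, hence $\mathbb S^3$. Capping with a 4-ball containing a source gives a manifold homeomorphic to $\mathbb S^4$ (in fact diffeomorphic, via handle cancellation). This step needs $\alpha$ to sit on the outer side. So I will reorganize the pieces: the source $\alpha$ caps the boundary sphere, and the cross-section sits between $N_A$ (containing $\omega,\sigma_1$) and the complementary handles ($\sigma_2$ and $\alpha$). Dually, the complement of $N_A$ in $\mathbb S^4$ is $\mathbb S^2\times D^2$, as required.

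Finally, perturb the gluing so that $\Lambda\pitchfork\lambda$. Then the flow is gradient-like, its characteristic cross-section is the gluing image, and the required homeomorphism $h$ is given by construction.

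\textbf{Countably many classes.} For fixed $\gamma$, produce infinitely many pairs $(\Lambda,\lambda)$ with $|\Lambda\cap\lambda|=2\gamma+1$ and pairwise non-homeomorphic. Start from $\lambda_0=\{x\}\times\mathbb S^1$ and $\Lambda=\mathbb S^2\times\{y\}$, which meet in one point. Add $\gamma$ pairs of intersection points by finger moves of $\lambda_0$ across $\Lambda$, with the fingers tied into a knot $K_n$ (say a connected sum of $n$ trefoils) in the ball $\Sigma\setminus\Lambda\cong \mathbb S^2\times I$. Since the finger is an isotopy in $\Sigma$, $\lambda$ stays a trivial knot.

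As the invariant, cut $\Sigma$ along $\Lambda$. This gives $\mathbb S^2\times I$, which is $B^3$ minus a ball, and $\lambda$ becomes a $(2\gamma+1)$-strand tangle in it. Any homeomorphism $h$ preserving the scheme induces a homeomorphism of these tangles. A tangle invariant, for instance the knot type of the closure obtained by capping the strands by a fixed pattern, or the fundamental group of the tangle complement, distinguishes the $K_n$; one simple choice is the genus or determinant of the local knot summand $K_n$.

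By Theorem~\ref{clas}, distinct tangle types give inequivalent flows, each with exactly $2\gamma+1$ heteroclinic curves. The hardest step is proving that the cut-open tangles are distinct up to homeomorphism. I would handle it by showing that $K_n$ appears as a prime decomposition summand of a canonically associated knot, obtained by banding the strands along $\Lambda$.
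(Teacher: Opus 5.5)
\textbf{The gap is in the countable family.}

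As you describe it, the knot $K_n$ is put into the fingers and ``the finger is an isotopy''. That construction produces nothing new. After cutting $\Sigma$ along $\Lambda$, a finger tip is an arc with both endpoints on one boundary sphere of $\mathbb S^2\times[0,1]$; it runs out along a path $\rho$ and back. Shrinking it back along $\rho$ is an isotopy fixed on the boundary and supported near $\rho$. So as long as $\rho$ avoids the other strands, a knotted $\rho$ gives the same tangle as an unknotted one, and all your pairs are equivalent.

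You need instead to tie $K_n$ as a local knot into a strand of the cut-open tangle whose two endpoints lie on the same side of $\Lambda$. A knot on a strand crossing from one side to the other cannot be read off from that strand alone, by the light bulb trick (cf. Corollary~\ref{inters1}). Tying a local knot is not an isotopy, so the triviality of $\lambda$ must come from the light bulb theorem: $\lambda$ still meets a sphere $\mathbb S^2\times\{z\}$ away from $\Lambda$ in exactly one point.

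You also still need an invariant, and this is the step you leave open. ``Capping the strands by a fixed pattern'' and ``banding along $\Lambda$'' are not preserved by homeomorphisms of the pair, which can braid the endpoints on $\Lambda$ and exchange its two sides. ``The local summand $K_n$'' presupposes the conclusion.

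The missing idea is essentially what the paper does with the knots $k_{i,1}$ built from the arcs $l_{i,j}$ in the ball $B$:
\begin{itemize}
\item Embed the cut-open manifold in $S^3$ as a spherical shell, and close each same-side strand by two radial segments into the adjacent complementary ball.
\item Every homeomorphism of the shell extends to $S^3$ by coning over both complementary balls, and this extension carries radial segments to radial segments.
\item Hence the knot type, up to mirror image, of each closed-up strand is an invariant of the scheme, and so is the multiset of these knot types.
\end{itemize}
For $K_n$ a connected sum of $n$ trefoils the genera differ. The necessity part of Theorem~\ref{cond} then gives pairwise inequivalent flows.

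\textbf{The realization is fine as you first state it, but the detour is wrong.}

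Your realization is essentially the paper's Lemma~\ref{rea}. The manifold $N_A\cup_{\psi_A^{-1}\psi_R}N_R$ is already a closed manifold carrying a flow of $G_{1,2,4}$ with the prescribed scheme. This assumes, as the paper implicitly does, that $\Lambda\pitchfork\lambda$ from the start; perturbing would change the pair. The class does not prescribe the ambient manifold, so the ``identifying the result'' paragraph is unnecessary.

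That paragraph is also wrong as argued. A 1-handle together with a 2-handle running over it algebraically once gives a contractible manifold whose boundary need not be $\mathbb S^3$ (Mazur manifolds). What makes the handles cancel is the geometric triviality of $\lambda$, which is part of the hypothesis, not the intersection number $\pm1$ with $\Lambda$.
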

%%%%%%%%%%%%%%%%%%%%%%%%%%%%%%%%%%%%%%

\section{Topological Aspects of the Dynamics of Flows from the Class $G_{\mu, \nu, k}$}\label{top_s}

In this section, we prove Theorem~\ref{top} and Lemma~\ref{seq1}, which describe the  topology of the  embedding of  closures of invariant manifolds of saddle equilibria.

\subsection{Energy Function}

Recall that a $C^2$-smooth function $\varphi: M^n\to \mathbb R$ is called a {\it Morse function} if all its critical points are non-degenerate. The {\it Morse index} of a non-degenerate critical point $p\in M^n$ is the number of negative eigenvalues of the Hessian matrix evaluated at this point.
In~[\onlinecite{Sm61,Me68}], it was proven that for any gradient-like flow  there exists {\it an energy function},  a Morse function that is  strictly decreasing along  trajectories other than equilibrium states and has a critical point at each equilibrium state. This implies the following assertion, which refines the properties of the energy function for flows in the class under consideration.

\begin{propos}\label{SM}
For any flow $f^t\in G_{\mu, \nu, k}$, there exists an energy function, i.e., a Morse function $\varphi: M^4\to [0,4]$ with the following properties:
\begin{enumerate}
\item $\varphi$ is strictly decreasing along flow trajectories other than equilibrium states;
\item the trajectories of the flow $f^t$ transversely intersect the set $\varphi^{-1}(c)$ for any non-critical value $c\in (0.4)$ of the function $\varphi$;
\item the set of critical points of the function $\varphi$ coincides with the set $\Omega_{f^t}$;
\item for any equilibrium $p\in \Omega_{f^t}$ such that ${\rm dim}\,W^u_p=\mu_p\in \{0,\dots, 4\}$,
there exists a neighborhood $U_p$ equipped with local coordinates (Morse coordinates) $y_1,\dots, y_4$ such that the sets $W^u_p\cap U_p, W^s_p\cap U_p$ are described by the equations
$y_{\mu_p+1}=\dots=y_4=0$, $y_1=\dots=y_{\mu_p}=0$, respectively, and the function $\varphi$ has the form 
$$\varphi(y_1,\dots, y_4)=\mu_p+\sum\limits_{i=1}^4\alpha_iy_i^2,$$
where $\alpha_i\in \{+1, -1\}$ and the number of negative $\alpha_i$ is equal to $\mu_p$.
\end{enumerate}
\end{propos}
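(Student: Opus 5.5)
The plan is to start from the energy function of Smale and Meyer~[\onlinecite{Sm61,Me68}], which is a Morse function decreasing along nonequilibrium trajectories with critical points exactly at $\Omega_{f^t}$, and modify it in steps until it satisfies items 1--4.

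\textbf{Step 1: linearize near the equilibria.} Near each hyperbolic equilibrium $p$ I would change $f^t$ only up to topological equivalence, so that in a neighborhood $U_p$ the flow is linear in some coordinates $y_1,\dots,y_4$. A convenient model is $\dot y_i=y_i$ for $i\le\mu_p$ and $\dot y_i=-y_i$ for $i>\mu_p$. By the Grobman--Hartman theorem together with the Smale/Meyer construction, one can take a gradient-like vector field that agrees with this model near $p$. In these coordinates the functions $\mu_p+\sum_i\alpha_iy_i^2$, with $\alpha_i=-1$ for $i\le\mu_p$ and $\alpha_i=+1$ otherwise, strictly decrease along the flow, since the derivative along the flow is $-2\sum_i y_i^2$. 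The same coordinates make $W^u_p\cap U_p=\{y_{\mu_p+1}=\dots=y_4=0\}$ and $W^s_p\cap U_p=\{y_1=\dots=y_{\mu_p}=0\}$, as item 4 requires.

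\textbf{Step 2: build a self-indexing function.} Following Meyer's construction, I would build $\varphi$ inductively over the equilibria, ordered by Morse index. The order in which equilibria can be connected by trajectories is compatible with the Morse index. Sinks, saddles of index $\mu$, saddles of index $\nu$ and sources are processed in that order, and $\varphi$ is given the value $\mu_p$ at each $p$. When $\mu=\nu$ there are no heteroclinic connections between the two saddles, so giving both the same value causes no conflict. On $U_p$ the function is defined by the quadratic formula from Step 1. Between these neighborhoods it is extended using level sets: take cross-sections $\varphi^{-1}(c)$ transverse to the flow, glue them together along trajectories, and interpolate monotonically along trajectories.

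\textbf{Step 3: verify the items.} Item 1 comes from the monotone interpolation along trajectories. For item 2, a strictly decreasing function with nonzero derivative along the flow off $\Omega_{f^t}$ has regular level sets that are transverse to trajectories. Item 3 holds because outside $\bigcup U_p$ the derivative of $\varphi$ along the flow is negative, so $\varphi$ has no critical points there, and inside $U_p$ the only critical point is $p$. The range is $[0,4]$ because sinks take the value $0$ and sources the value $4$.

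\textbf{Main obstacle.} The hardest step is gluing the local quadratic models to the global function in Step 2. The extension has to stay smooth, strictly monotone along trajectories, and take the prescribed values near invariant manifolds that intersect heteroclinically. I would handle this with Meyer's tubular-family technique, or else quote the standard result that every gradient-like (Morse--Smale) flow admits a self-indexing energy function that is quadratic in Morse coordinates.
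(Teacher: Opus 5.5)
Your overall strategy matches the paper, which gives no argument beyond citing Smale and Meyer. However, Step 1 does not establish items 2 and 4 for the given flow.

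\textbf{The gap in Step 1.} The Grobman--Hartman theorem gives only a homeomorphism conjugating $f^t$ near $p$ to its linear part, and a smooth linearization does not exist in general (resonances among the eigenvalues of $J_p$). This leaves two options, and neither works.
\begin{itemize}
\item If your coordinates $y_i$ are merely topological, then $\mu_p+\sum\alpha_iy_i^2$ is not $C^2$ and not a Morse function, and item 2 makes no sense for it.
\item If, as you actually propose, you replace $f^t$ by another flow $g^t$ that is linear near $p$, then everything you build is an energy function for $g^t$. Transporting it back by the equivalence homeomorphism $H$ gives $\varphi\circ H$, which is only continuous.
\end{itemize}
Items 2 and 4 are smooth statements about the trajectories and invariant manifolds of $f^t$ itself. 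The paper uses them in that form: smooth level sets $\Sigma_c$ transverse to $f^t$, and smooth $\Lambda_s$, $\lambda_u$ meeting transversally.

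Your fallback of quoting an energy function that is ``quadratic in Morse coordinates'' does not give item 4 by itself either. The Morse lemma does not place $W^u_p$, $W^s_p$ on coordinate planes. Already in dimension two, take $\varphi=1-u^2+s^2$ and the field $\dot u=u$, $\dot s=2cu-s$ with $0<|c|<1$. Then $\dot\varphi=-2u^2+4cus-2s^2<0$ off the origin, but $W^u=\{s=cu\}$. The lines $W^u$ and $W^s=\{u=0\}$ are not orthogonal for the Hessian, so no Morse chart of $\varphi$ straightens both. The alignment has to be built into the local model.

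\textbf{The repair.} No linearization of the flow is needed, since item 4 asks only that the invariant manifolds be straight.
\begin{itemize}
\item By the stable manifold theorem, $W^u_p$ and $W^s_p$ are smooth near $p$ and transverse there. So there is a smooth chart $(x,y)\in\mathbb R^{\mu_p}\times\mathbb R^{4-\mu_p}$ in which they are $\{y=0\}$ and $\{x=0\}$.
\item Invariance of these planes means the field of $f^t$ reads $\dot x=Ax+a(x,y)$, $\dot y=By+b(x,y)$, with $a(0,y)=0$, $b(x,0)=0$ and $Da(0)=Db(0)=0$. Hence $|a|\le\varepsilon|x|$ and $|b|\le\varepsilon|y|$ near $p$.
\item Choose inner products adapted to $A$ and $B$, i.e. $\langle Ax,x\rangle\ge\alpha|x|^2$ and $\langle By,y\rangle\le-\beta|y|^2$ with $\alpha,\beta>0$. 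Then the function $\mu_p-|x|^2+|y|^2$ has derivative along $f^t$ at most $-2(\alpha-\varepsilon)|x|^2-2(\beta-\varepsilon)|y|^2<0$ off $p$.
\item Orthonormal bases for these inner products give exactly the coordinates $y_1,\dots,y_4$ of item 4.
\end{itemize}
With these local models for $f^t$ itself, Meyer's gluing (which you rightly identify as the substantial step) produces the required $\varphi$. Your Steps 2 and 3 then go through as written.
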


\subsection{Proof of the Theorem~\ref{top}}

We provide the proof of the theorem for the case $\mu=1,\nu=2$, which can be easily generalized to the remaining cases except for $\mu=\nu=2$ and the cases where $k=4,\mu=1,\nu=3$. The theorem for these last cases was proven in~[\onlinecite{GuSa24,GuSa25}]. 

Let $f^t\in G_{1, 2, k}$, $\sigma_1, \sigma_2$ be the saddle equilibria of the flow $f^t$ of Morse indices 1, 2, respectively, $M^4$ be the supporting manifold, and $\varphi: M^4\to [0,4]$ be the energy function of the flow $f^t$ satisfying the conclusion of Proposition~\ref{SM}. It was shown in~[\onlinecite[Corollary 10.1.8, p. 225]{Pa88}] that if $M^n$ is not simply connected, then every Morse function on $M^n$ has a critical point of index 1. Since $\psi=4-\varphi$ has no such critical points, $M^4$ is simply connected and hence orientable.

Since the function $\varphi$ decreases along the trajectories of the flow $f^t$, the manifold $M^4$ can be represented as unions of stable manifolds of all equilibria of the flow $f^t$. It follows from~[\onlinecite[Theorem 8,9]{HW}] that if the complement of $M^4$ to some subset $X$ is disconnected, then the dimension of $X$ is three. Since the flow $f^t$ has a unique equilibrium state $\sigma_1$, whose stable manifold is three-dimensional, the set $M^4\setminus W^s_{\sigma_1}$ has at most two connected components. Consequently, only one of two possibilities is realized: 1) the flow $f^t$ has a unique sink $\omega$; 2) the flow $f^t$ has two sinks $\omega_1, \omega_2$. Moving on to the function $\psi=4-\varphi$, which is the energy function for the flow $f^{-t}$, we similarly find that the set of equilibrium states of the flow $f^t$ contains exactly one source $\alpha$. Then $k=4$ in case 1) and $k=5$ in case 2).

Let $\beta_i$ denote the $i$-th Betti number of the manifold $M^4$. Since $M^4$ is simply connected, $\beta_0=1, \beta_1=0$. It follows from Poincare duality  that $\beta_4=\beta_0=1, \beta_3=\beta_1=0$. By the Morse equality (see~[\onlinecite[\S 5]{Morse}]) we have $$k_0-k_1+k_2-k_3+k_4=\beta_0-\beta_1+\beta_2-\beta_3+\beta_4.$$ Then in case 1) $\beta_2=0$ and $M^4$ is a simply connected homology sphere. It follows from ~[\onlinecite[Theorem~1.6]{Fr}] that $M^4$ is homeomorphic to the sphere $S^4$. In case 2) we obtain $\beta_2=1$. It follows from~[\onlinecite[Theorem~1.6]{Fr}] that there exists a unique (up to homeomorphism) simply connected smooth manifold with such a Betti number, and this manifold is the complex projective plane $\mathbb{CP}^2$. Theorem~\ref{top} is proven.

\begin{remark}\label{nw} In the proof of Theorem~\ref{top}, it was shown that the nonwandering set of the flow $f^t\in G_{1,2,k}$, in addition to two saddles $\sigma_1, \sigma_2$, contains exactly one source and one (two) sinks in the case $k=4$ ($k=5$, respectively).

\end{remark}

\subsection{Handle decomposition of the ambient manifold}\label{dec}

Everywhere below, unless otherwise stated, an {\it $n$-ball or $n$-ball} is a manifold $B^n$ homeomorphic to the unit ball $$\mathbb B^n=\{x\in \mathbb R^n|\, |x|\leq 1\},$$ an open $n$-ball and an {\it $(n-1)$-sphere} $S^{n-1}$ are manifolds homeomorphic to the interior ${\rm int}\, \mathbb B^n$ and the boundary $\partial \mathbb B^n$ of  $\mathbb B^n$, respectively.

Recall that a {\it handle of index $i$} or just  {$i$-handle} is the direct product $$H^n_i=\mathbb {B}^i\times \mathbb {B}^{n-i}.$$ Disks $\mathbb B^i\times \{O\}$ and $\{O\}\times \mathbb B^{n-i}$ are called the {\it foot and secant  disks} of the $i$-handle, respectively. The boundary of the handle is naturally represented as the union of two subsets $$P_i=\partial \mathbb B^i\times \mathbb B^{n-i}, \, Q_i=\mathbb B^i\times \partial\mathbb B^{n-i}$$ with a common boundary. The set $P$ is called a {\it foot of the handle}. Spheres $\partial{\mathbb B}^i\times \{O\}$ and $\{O\}\times \partial{\mathbb B}^{n-i}$ are called the {\it foot  and secant spheres}, respectively.

Let $X$ be an $n$-dimensional compact manifold with boundary, $\psi: P_i \to \partial X$ be a smooth embedding, and $$Y=X\cup_\psi H^n_i$$ be the manifold obtained by identifying points $x\in P_i$ and $\psi(x)$. We say that $Y$ is obtained by {\it gluing the handle $H^n_i$} to the manifold $X$.
Representing a manifold as a union of handles is called its {\it handle decomposition}.

The Morse function $\varphi: M^n\to M^n$ defines a handle decomposition  of $M^n$ into handles (see, for instance, [\onlinecite[Theorems 3.2, 3.4]{M}]) such that each $i$-handle contains a unique critical point of index $i$. This, together with Proposition~\ref{SM} and Remark~\ref{nw}, implies the following assertion, which describes the handle decomposition of the ambient  manifold $M^4$ of a flow $f^t\in G_{1,2,k}$,  determined by its energy function $\varphi$.

For $c\in [0,4]$, we set $M_c=\varphi^{-1}([0,c])$, $\Sigma_c=\partial M_c$. Note that if $c$ is a regular value of $\varphi$, then $\Sigma_{c}$ is a smooth closed three-dimensional submanifold of $M^4$ that transversally intersects the trajectories of the flow $f^t$. We choose numbers $c_1, c_2, c_3$ such that
\begin{equation}\label{const}
0<c_1<1<c_2<2<c_3<4.
\end{equation}

\begin{propos}\label{hd}
\begin{enumerate}
\item The sets $M_{c_1}$ and $M^4\setminus {\rm int}\,  M_{c_4}$ are the union of 0-handles and a 4-handle, respectively.
\item For each $i\in \{1,2\}$, the set $M_{c_{i+1}}$ is obtained from $M_{c_{i}}$ by gluing the $i$-handle.

\item The manifold
$\Sigma_{c_2}$ is diffeomorphic to $\mathbb S^2\times \mathbb S^1$ for $k=4$ and to the sphere $\mathbb S^3$ for $k=5$.
\end{enumerate}
\end{propos}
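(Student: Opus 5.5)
Items 1 and 2 follow from standard Morse theory for the energy function $\varphi$ of Proposition~\ref{SM}, and item 3 reduces to describing what gluing a $1$-handle does to the boundary. (The statement writes $c_4$ in item 1; I read it as $c_3$, since only $c_1,c_2,c_3$ are fixed in (\ref{const}).) By Proposition~\ref{SM}, the critical points of $\varphi$ are exactly the equilibria, and each has critical value equal to its Morse index. By Remark~\ref{nw}, the equilibria are the sink or sinks at level $0$, $\sigma_1$ at level $1$, $\sigma_2$ at level $2$, and the source at level $4$. Hence $[0,c_1]$ contains only the critical value $0$, whose critical points all have index $0$. I will therefore apply the handle-decomposition theorems of [\onlinecite[Theorems 3.2, 3.4]{M}]: crossing a critical level with critical points of index $i$ attaches one $i$-handle per critical point, and an interval without critical values gives a product collar, so nothing changes up to diffeomorphism. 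This gives item 1: $M_{c_1}$ is a disjoint union of $0$-handles, one for each sink (so one $4$-ball when $k=4$ and two when $k=5$). Applying the same theorems to $\psi=4-\varphi$ shows that $M^4\setminus{\rm int}\,M_{c_3}$ is a single $4$-handle around the source. Item 2 follows the same way, since each interval $[c_i,c_{i+1}]$ contains exactly one critical value, namely $i$, with exactly one critical point $\sigma_i$.

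For item 3, I would describe $\Sigma_{c_2}=\partial M_{c_2}$, where $M_{c_2}$ is $M_{c_1}$ with one $1$-handle $H^4_1=\mathbb B^1\times\mathbb B^3$ attached. The foot $P_1$ consists of two $3$-balls glued into $\partial M_{c_1}$. By general position, both components of the unstable separatrix $W^u_{\sigma_1}\setminus\{\sigma_1\}$ of $\sigma_1$ tend to sinks, which fixes the attaching regions.

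When $k=5$, the two separatrices go to different sinks $\omega_1,\omega_2$. Otherwise $M^4\setminus W^s_{\sigma_1}$ would be connected while containing both basins, which contradicts the dichotomy argument in the proof of Theorem~\ref{top}: the $3$-dimensional set $W^s_{\sigma_1}$ must separate the two basins. So the handle joins two disjoint $4$-balls, and the result is a boundary connected sum $B^4\natural B^4\cong B^4$, whose boundary is $\mathbb S^3$.

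When $k=4$, both feet lie in the single sphere $\partial M_{c_1}\cong\mathbb S^3$. The boundary is then obtained by removing two $3$-balls from $\mathbb S^3$ and gluing in $\mathbb S^0\times\mathbb B^3$ replaced by $\mathbb B^1\times\mathbb S^2$. This yields $\mathbb S^2\times\mathbb S^1$ or the non-orientable $\mathbb S^2$-bundle over $\mathbb S^1$, depending on whether the gluing preserves or reverses orientation.

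The main obstacle is ruling out the non-orientable bundle. My plan is to use that $M^4$ is orientable, which was proved in Theorem~\ref{top}, so $M_{c_2}\subset M^4$ is orientable. If the attaching map of the $1$-handle reversed the orientation induced from $B^4$, then $M_{c_2}$ would contain an orientation-reversing loop running through the handle and back through the ball, which is impossible. An orientation-preserving attachment of a $1$-handle to $B^4$ gives $\mathbb S^1\times\mathbb B^3$, whose boundary is $\mathbb S^1\times\mathbb S^2$. This proves item 3 for $k=4$.
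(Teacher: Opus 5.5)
Items 1 and 2, and the case $k=4$ of item 3, are correct and follow the paper's argument. You are right that $c_4$ should read $c_3$, and orientability of $M_{c_2}\subset M^4$ rules out the twisted $\mathbb S^2$-bundle exactly as in the paper. The gap is in the case $k=5$, at the step claiming that the two separatrices of $\sigma_1$ go to different sinks. Your reason is that otherwise $M^4\setminus W^s_{\sigma_1}$ would be connected, contradicting the claim that $W^s_{\sigma_1}$ separates the basins. This fails, because $M^4\setminus W^s_{\sigma_1}$ is connected for \emph{every} flow in $G_{1,2,5}$. The source $\alpha$ does not lie in $W^s_{\sigma_1}$, but it lies in the closure of both basins $W^s_{\omega_1}$ and $W^s_{\omega_2}$, since the open dense set $W^u_\alpha$ meets each of them. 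Hence $W^s_{\omega_1}\cup\{\alpha\}\cup W^s_{\omega_2}$ is connected and dense in $M^4\setminus W^s_{\sigma_1}$. So $W^s_{\sigma_1}$ by itself never separates the basins (only its closure does), and connectedness gives no contradiction. The sentence in the proof of Theorem~\ref{top} that you rely on is itself loose on this point.

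The paper gets the needed fact without any dynamics. $M_{c_3}$ is $M^4$ with an open $4$-ball (the $4$-handle) removed, so it is connected. The $2$-handle is attached along the connected set $\partial\mathbb B^2\times\mathbb B^2$, so it cannot join two components; hence $M_{c_2}$ is connected. Therefore the $1$-handle must join the two $0$-balls, and $M_{c_2}\cong\mathbb B^4$.

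If you want to keep the dynamical formulation, argue with the frontier of a basin instead.
\begin{itemize}
\item ${\rm Fr}\,W^s_{\omega_2}$ is a closed invariant set that separates $M^4$ and lies in $W^s_{\sigma_1}\cup W^s_{\sigma_2}\cup\{\alpha\}$.
\item By Hurewicz--Wallman it cannot lie in the at most $2$-dimensional set $W^s_{\sigma_2}\cup\{\alpha\}$. So it meets $W^s_{\sigma_1}$, and being closed and invariant it then contains $\sigma_1$.
\item Points near $\sigma_1$ but off $W^s_{\sigma_1}$ lie in the basins of the sinks that receive the two separatrices. Hence one separatrix goes to $\omega_2$, and symmetrically one goes to $\omega_1$.
\end{itemize}
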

\begin{proof}
From Proposition~\ref{SM} and Theorem~\ref{top}, taking into account Remark~\ref{nw}, it follows that the function $\varphi$ has a unique maximum point and one (two) minimum points in the case $k=4$ ($k=5$, respectively). Therefore, the manifold $M^4$ is obtained from $M_{c_3}$ by gluing a single handle $M^4\setminus {\rm int}\, M_{c_3}$ of index 4. It follows that $\Sigma_{c_3}$ is a $3$-sphere and the manifold $M_{c_3}$ is connected. On the other hand, $M_{c_3}$ is obtained from $M_{c_2}$ by gluing a handle of index 2. Since gluing handles of index 2 does not change the number of connected components of the manifold, the manifold $M_{c_2}$ is also connected. In turn, in the case $k=4$, the manifold $M_{c_2}$ is the $0$-handle  $M_{c_1}$ with a 1-handle glued on it. By Theorem~\ref{top}, the manifold $M^4$ is homeomorphic to $\mathbb {CP}^2$, and hence is orientable. Therefore, $M_{c_2}$ is homeomorphic to the direct product $\mathbb S^1\times \mathbb B^3$, and its boundary $\Sigma_{c_2}$ is homeomorphic to $\mathbb S^1\times \mathbb S^2$. In the case $k=5$, the manifold $M_{c_2}$ is a connected union of two 0-handles and one 1-handle, and hence is homeomorphic to the ball $\mathbb B^4$. Thus, in this case $\Sigma_{c_2}$ is homeomorphic to the sphere $\mathbb S^3$. Homeomorphic manifolds of dimension 3 are also diffeomorphic, from which we obtain the required statements.
\end{proof}

\subsection{Properties of the  scheme of the flow  $f^t\in G_{1,2,k}$}

In this section, we refine the definition of the scheme of the  flow in the introduction for flows in the class $G_{1,2,4}$, extend it to the case of flows in the class $G_{1,2,5}$, and clarify some properties of the schemes.

\begin{definition}\label{seq}
We say that a closed three-dimensional topological submanifold $\Sigma\subset M^4$ is a cross-section for a flow $f^t$ defined on $M^4$ if there exists a topological embedding $e: \Sigma\times [-1,1]\to M^4$ such that $e(\Sigma\times \{0\})=\Sigma$
and for any point $x\in \Sigma$, the segment $e([-1,1]\times \{x\})$ belongs to the trajectory of the point $x$. If each trajectory of the flow $f^t$ intersects $\Sigma$ at a unique point, then the cross-section  $\Sigma$ is called a global cross-section.
\end{definition}

Let $f^t\in G_{1,2,k}$, $k\in \{4,5\}$, $\sigma_1, \sigma_2$ be the saddle equilibria of the flow $f^t$ of Morse indices 1, 2, respectively, $M^4$ be the ambient manifold, and $\varphi: M^4\to [0,4]$ be the energy function of the flow $f^t$ satisfying the conclusion of Proposition~\ref{SM}. For an arbitrary $c_2\in (1,2)$, we set $$ \Sigma=\varphi^{-1}(c_2), \Lambda_s=\Sigma\cap W^s_{\sigma_1}, \lambda_u=\Sigma\cap W^u_{\sigma_2}.$$

\begin{definition}\label{sh-exdef} The set $S_{f^t}=\{\Sigma, \Lambda_s, \lambda_u\}$ %, \tilde \lambda_u\}$
is called the scheme of the flow  $f^t\in G_{1,2,k}$.
\end{definition} 

\begin{definition}\label{sh-eq} Schemes $S_{f^t}=\{\Sigma, \Lambda_s, \lambda_u\}$, $S_{{f'}^t}=\{\Sigma', \Lambda'_s, \lambda'_u\}$ of  flows $f^t, {f'}^t\in G_{1,2,k}$ are called equivalent if there exists  a homeomorphism $h: \Sigma\to \Sigma'$ such that $h(\Lambda_s)=\Lambda_s'$, $h(\lambda_u)=\lambda_u'$. 
\end{definition}

Let us remark that  $\Sigma$ is a global cross-section for the restriction of $f^t$ to the set $M^4\setminus ({\rm cl}\, W^u_{\sigma_1}\cup {\rm cl}\, W^s_{\sigma_2})$.

\begin{propos}\label{indep} The definition of the scheme does not depend on the choice of the energy function $\varphi$ and the secant $\Sigma$. Namely, if $\widehat \Sigma$ is a topological submanifold that is a global cross-section for the restriction of the flow $f^t$ to a set $M^4\setminus ({\rm cl}\, W^u_{\sigma_1}\cup {\rm cl}\, W^s_{\sigma_2})$ different from $\Sigma$, and $\widehat \Lambda_s=\widehat \Sigma\cap W^s_{\sigma_1}, \widehat \lambda_u=\widehat \Sigma\cap W^u_{\sigma_2}$, then the scheme $\{\widehat{\Sigma}, \widehat \Lambda_s, \widehat \lambda_u\}$ is equivalent to the scheme $\{\Sigma, \Lambda_s, \lambda_u\}$.
\end{propos}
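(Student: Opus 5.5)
The idea is to build a homeomorphism $h:\Sigma\to\widehat\Sigma$ by sliding points along trajectories, and to check that it carries $\Lambda_s$ and $\lambda_u$ onto $\widehat\Lambda_s$ and $\widehat\lambda_u$. Set $V=M^4\setminus({\rm cl}\,W^u_{\sigma_1}\cup{\rm cl}\,W^s_{\sigma_2})$; this is an open invariant set. Both $\Sigma=\varphi^{-1}(c_2)$ and $\widehat\Sigma$ are global cross-sections for $f^t|_V$, so each trajectory of $V$ meets each of them exactly once. For $x\in\Sigma$, let $\tau(x)\in\mathbb R$ be the unique time with $f^{\tau(x)}(x)\in\widehat\Sigma$, and put $h(x)=f^{\tau(x)}(x)$.

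First, $h$ is a bijection $\Sigma\to\widehat\Sigma$. It is well defined and injective because every trajectory of $V$ meets $\widehat\Sigma$ in exactly one point. It is surjective because every point of $\widehat\Sigma$ lies on a trajectory of $V$, and that trajectory meets $\Sigma$.

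Second, the invariance of the two sets. $W^s_{\sigma_1}$ and $W^u_{\sigma_2}$ are invariant, and $W^s_{\sigma_1}\cap V$ and $W^u_{\sigma_2}\cap V$ are unions of trajectories of $V$. Hence $h(\Lambda_s)=\widehat\Sigma\cap W^s_{\sigma_1}=\widehat\Lambda_s$ and $h(\lambda_u)=\widehat\lambda_u$.

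Here one must check that $\Sigma$ and $\widehat\Sigma$ lie entirely in $V$. For $\Sigma$ this holds because $\varphi$ takes values $<c_2$ on ${\rm cl}\,W^u_{\sigma_1}$ (which consists of $\sigma_1$, its unstable separatrices and sinks, all below level $1$) and values $>c_2$ on ${\rm cl}\,W^s_{\sigma_2}$. For the latter, ${\rm cl}\,W^s_{\sigma_2}=W^s_{\sigma_2}\cup\{\alpha\}$, possibly together with heteroclinic parts, which lie in $W^u_{\sigma_1}$ and so stay above level $1$. This needs care but is just the monotonicity of $\varphi$. For $\widehat\Sigma$, this is part of the hypothesis.

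Third, and this is the main step, continuity of $\tau$ and of $h^{-1}$. By Definition~\ref{seq}, $\widehat\Sigma$ has a flow-box collar $e:\widehat\Sigma\times[-1,1]\to M^4$. Fix $x_0\in\Sigma$ and $y_0=h(x_0)$. The image of $e$ near $y_0$ contains an open neighbourhood $W$ of $y_0$; I would get this from invariance of domain, since $e$ is an embedding of a $4$-manifold into $M^4$. Every point of $W$ reaches $\widehat\Sigma$ within flow-time bounded by the collar parameter, and that time depends continuously on the point through $e^{-1}$. By continuity of the flow, $f^{\tau(x_0)}(x)\in W$ for $x\in\Sigma$ near $x_0$. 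Uniqueness of intersection then forces $\tau(x)$ to be $\tau(x_0)$ plus this small correction, so $\tau$ is continuous.

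One subtlety is that the correction time in the collar is measured along trajectory segments, so a reparametrization may be needed. I would handle it by composing $e$ with the flow: the map $(y,s)\mapsto f^s(y)$ on $\widehat\Sigma\times(-\varepsilon,\varepsilon)$ is injective by the uniqueness property, hence an open embedding by invariance of domain. This gives a genuine local inverse of the form $z\mapsto(\text{point on }\widehat\Sigma,\ \text{time})$.

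The same argument with the roles of $\Sigma$ and $\widehat\Sigma$ exchanged shows that $h^{-1}$ is continuous, using the smooth transversality of $\Sigma$, which is even easier. So $h$ is a homeomorphism carrying $\Lambda_s$ to $\widehat\Lambda_s$ and $\lambda_u$ to $\widehat\lambda_u$, which is exactly the equivalence of schemes in Definition~\ref{sh-eq}.

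Independence of the choice of $\varphi$ and of $c_2$ is a special case. For any other energy function $\varphi'$ and any $c_2'\in(1,2)$, the level set $\varphi'^{-1}(c_2')$ is a global cross-section of $f^t|_V$ of the kind just treated: it is transversal, it is met exactly once by every trajectory of $V$ by monotonicity, and $\varphi'\to$ values $\le1$ or $\ge2$ at the ends of such trajectories.
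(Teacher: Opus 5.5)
Your proposal is correct and follows the paper's proof, which likewise sends each point of one cross-section to the point where its orbit meets the other. You also supply the continuity argument the paper leaves implicit: $(y,s)\mapsto f^s(y)$ is a continuous injection $\widehat\Sigma\times\mathbb R\to V$, hence a homeomorphism onto $V$ by invariance of domain. One small slip: ${\rm cl}\,W^s_{\sigma_2}$ contains no heteroclinic pieces, since $W^u_{\sigma_1}\cap W^s_{\sigma_2}=\emptyset$ by transversality and $1+2<4$; so ${\rm cl}\,W^s_{\sigma_2}=W^s_{\sigma_2}\cup\{\alpha\}$ and $\varphi\ge 2$ there, whereas ``above level $1$'' would not have sufficed.
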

\begin{proof} The homeomorphism $h$, which realizes the equivalence of schemes, associates with each point $x\in \widehat \Sigma$ a point $h(x)$ lying in the intersection of the orbit $\mathcal O_x$ of $x$ and the secant $\Sigma$.
\end{proof}

By Proposition~\ref{hd}, the manifold $\Sigma$ is diffeomorphic either to the sphere $\mathbb S^3$ or to the direct product $\mathbb S^3\times \mathbb S^1$.
Recall that a simple closed curve $\lambda\subset S^3$ is called a \textit{trivial knot} if there exists a locally flat embedding $e: \mathbb B^2\to S^3$ such that $e(\partial \mathbb B^2)=\lambda$. We refer to  a simple closed curve $\lambda \subset S^2\times S^1$ as a trivial knot in $S^2\times S^1$ if there  exists a homeomorphism $h: S^2\times S^1\to \mathbb S^2\times \mathbb S^1$ such that $h(\lambda)=\{x\}\times \mathbb S^1$, $x\in \mathbb S^2$. By the well-known "lightbulb theorem" (see, e.g., [\onlinecite[Ex.3 \S F Ch.9]{Ro}]), a smooth knot $\lambda\subset S^2\times S^1$ is trivial if and only if there exists a sphere $S^2\times \{x\}$, $x\in S^1$, such that the intersection $\lambda\cap (S^2\times \{x\})$ consists of a single point. 

\begin{lemma}\label{seq1} Let $f^t\in G_{1,2,k}$. Then $\lambda_u$ is a trivial knot smoothly embedded in $\Sigma$, and $\Lambda_s$ is a 2-sphere smoothly embedded in $\Sigma$. If $k=4$, then $\Lambda_s$ does not divide $\Sigma$.
\end{lemma}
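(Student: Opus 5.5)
We work in the handle picture of Proposition~\ref{hd}, using the energy function $\varphi$ and the Morse coordinates of Proposition~\ref{SM}. Fix $c_2\in(1,2)$ and $\Sigma=\Sigma_{c_2}=\varphi^{-1}(c_2)$. There are two sets to describe: $\Lambda_s=\Sigma\cap W^s_{\sigma_1}$, which is the secant sphere of the $1$-handle seen from above, and $\lambda_u=\Sigma\cap W^u_{\sigma_2}$, which is the attaching (foot) sphere of the $2$-handle.

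\emph{The sphere $\Lambda_s$.} Near $\sigma_1$, in Morse coordinates, $\varphi=1-y_1^2+y_2^2+y_3^2+y_4^2$ and $W^s_{\sigma_1}=\{y_1=0\}$. For small $\varepsilon>0$ the level $\varphi^{-1}(1+\varepsilon)$ meets $W^s_{\sigma_1}$ in the round $2$-sphere $\{y_1=0,\ y_2^2+y_3^2+y_4^2=\varepsilon\}$. The trajectories of $W^s_{\sigma_1}\setminus\{\sigma_1\}$ between the levels $1+\varepsilon$ and $c_2$ contain no equilibria and cross each level transversally. Moving along them is therefore a diffeomorphism of the level sets that carries this sphere onto $\Lambda_s$, so $\Lambda_s$ is a smoothly embedded $2$-sphere (the belt sphere of the $1$-handle).

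\emph{Non-separation for $k=4$.} When $k=4$ there is a single sink $\omega$, so $W^u_{\sigma_1}$ consists of two trajectories, both ending at $\omega$. Joined with $\sigma_1$ and $\omega$ they form a closed curve $\ell$. This curve lies below $\Sigma$ and meets the level $1+\varepsilon$ only at $\sigma_1$'s handle. Dually, one can take a closed curve in $\Sigma$ crossing $\Lambda_s$ transversally exactly once: go over the $1$-handle along its core direction and come back through the boundary of the $0$-handle $M_{c_1}$, which is connected minus two small disks. Since the algebraic intersection number of this curve with $\Lambda_s$ is $1$, the sphere $\Lambda_s$ cannot separate $\Sigma$. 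For $k=5$ the same curve would have to pass through two different $0$-handles, and that is exactly why the argument fails there. Equivalently, $\Sigma\cong S^2\times S^1$, and $\Lambda_s$ is the belt sphere $S^2\times\{pt\}$.

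\emph{The curve $\lambda_u$.} Dually, near $\sigma_2$ we have $W^u_{\sigma_2}=\{y_3=y_4=0\}$, and just below the value $2$ it meets the level set in a circle. Flowing down to the level $c_2$ is a diffeomorphism of level sets, because the only critical value between them would be $1$, and $1<c_2$. So $\lambda_u$ is a smooth simple closed curve, the attaching circle of the $2$-handle.

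\emph{Triviality of $\lambda_u$.} This is the main step. The $2$-handle attached along $\lambda_u$ produces $M_{c_3}$, whose boundary $\Sigma_{c_3}$ is a $3$-sphere by Proposition~\ref{hd}, since $M^4\setminus{\rm int}\,M_{c_3}$ is a $4$-ball.
\begin{itemize}
\item Case $k=5$. Here $\Sigma\cong S^3$, and $\Sigma_{c_3}$ is obtained from $\Sigma$ by surgery on $\lambda_u$. Surgery on a knot in $S^3$ that yields $S^3$ forces the knot to be trivial, by the Gordon--Luecke theorem. The surgery slope is integral, so the knot complement theorem gives exactly this conclusion.
\item Case $k=4$. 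Here $M_{c_3}$ is a $4$-ball made from $S^1\times B^3$ and one $2$-handle, so the $1$-handle and $2$-handle must cancel. In particular $\lambda_u$ generates $\pi_1(S^2\times S^1)=\mathbb Z$, and surgery on $\lambda_u$ in $S^2\times S^1$ gives $S^3$. By Gabai's theorem on surgery in $S^2\times S^1$ (property R), a knot in $S^2\times S^1$ with an $S^3$ surgery is isotopic to $\{x\}\times S^1$.
\end{itemize}
The expected obstacle is precisely this invocation of deep $3$-manifold results (Gordon--Luecke, Gabai), together with the need to check that the homeomorphism realizing triviality can be taken in the sense defined above. After applying the cited result I would state it directly through the light bulb criterion: exhibit a sphere $S^2\times\{x\}$ meeting $\lambda_u$ in exactly one point.
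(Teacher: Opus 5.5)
Your proposal is correct. For $\Lambda_s$ and its non-separation you follow essentially the paper's reasoning, but more concretely. You carry the small Morse sphere up to $\Sigma$ by the flow instead of using $\Lambda_s\simeq W^s_{\sigma_1}\setminus\sigma_1$. You certify non-separation by a closed curve in $\Sigma$ meeting $\Lambda_s$ once, rather than by noting that the cocore ball $W^s_{\sigma_1}\cap M_{c_2}$ does not divide $M_{c_2}$. The curve $\ell$ is superfluous; it lies in $\varphi^{-1}([0,1])$, so it does not meet the level $1+\varepsilon$ at all.

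The genuine difference is in proving that $\lambda_u$ is trivial. The paper reverses time:
\begin{itemize}
\item $\Sigma$ is obtained from $\partial N_{3.5}\cong S^3$ by integral surgery on the belt circle $\lambda=W^s_{\sigma_2}\cap\partial N_{3.5}$, so Gordon--Luecke or Property R makes $\lambda$ unknotted.
\item The flow identifies $\Sigma\setminus\lambda_u$ with $\partial N_{3.5}\setminus\lambda$, so $\lambda_u$ has solid-torus exterior.
\item The knot-complement theorem ($k=5$), or the meridian-to-meridian gluing of two solid tori into $S^2\times S^1$ ($k=4$), then yields an explicit homeomorphism sending $\lambda_u$ to $\{x\}\times\mathbb S^1$.
\end{itemize}

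You instead surger upward along the attaching circle $\lambda_u$ itself, with result $\Sigma_{c_3}\cong S^3$. For $k=5$ this is shorter: one application of the Gordon--Luecke surgery theorem, with no transfer of complements.

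For $k=4$, the result you invoke (a knot in $S^2\times S^1$ with an $S^3$ surgery is isotopic to $\{x\}\times S^1$) is the dual of Property R, not Property R itself. Unless you can quote it in exactly that form, you must derive it, and the derivation is the paper's argument:
\begin{itemize}
\item pass to the core of the surgery solid torus in $S^3$, which is the belt circle $\lambda$;
\item apply Property R to get a solid-torus exterior for $\lambda_u$;
\item match meridians to find a sphere $S^2\times\{x\}$ meeting $\lambda_u$ once.
\end{itemize}
So in this case your route packages the paper's reasoning into a citation. The paper's version is self-contained modulo the surgery theorems and directly produces the homeomorphism required by the definition of a trivial knot in $S^2\times S^1$. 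Likewise, your remark that the $1$- and $2$-handles `must cancel' follows from this conclusion; it is not an independent step toward it.
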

\begin{proof} By definition, $\Lambda_s=\Sigma\cap W^s_{\sigma_1}$ is a closed two-dimensional manifold that is global cros-section for the restriction of $f^t$ to $W^s_{\sigma_1}\setminus \sigma_1$. Then  $\Lambda_s$ is a strong deformation retract of the set $W^s_{\sigma_1}\setminus \sigma_1$ and is homotopically  equivalent to it. Since the equilibrium state of $\sigma_1$ is hyperbolic, $W^s_{\sigma_1}\setminus \sigma_1$ is homeomorphic to $\mathbb R^3\setminus \{O\}$, and, consequently,  is homotopically equivalent to the sphere $\mathbb S^2$. Therefore, $\Lambda_s$ is diffeomorphic to $\mathbb S^2$.

In what follows, we use the notation of Section~\ref{dec}. In the proof of Proposition~\ref{hd}, we established that the manifold $M_{c_2}$ is obtained from $M_{c_1}$ by gluing a single 1-handle $H^4_1$. The set $W^s_{\sigma_1}\cap M_{c_2}$ is a 3-ball that divides the handle $H^4_1$. In the case $k=4$, the manifold $M_{c_2}$ is the result of gluing the handle $H^4_1$ to the ball $H^4_0=M_{c_1}$. Consequently, $W^s_{\sigma_1}\cap M_{c_2}$ does not divide $M_{c_2}$, and $\Lambda_s=W^s_{\sigma_1}\cap \partial M_{c_2}$ does not divide $\Sigma=\partial M_{c_2}$.

Now let us  prove that the knot $\lambda_u$ is trivial. Recall that $M_{c_2}=\varphi^{-1}([0,c_2])$, $c_2\in (1,2)$. Set $N_{c_2}=M^4\setminus {\rm int}\,M_{c_2}$.
The function $\psi=4-\varphi$ is the energy function for the flow $f^{-t}$ and $N_{c_2}=\psi^{-1}([0,4-c_2])$. From the properties of the Morse function, Proposition~\ref{SM} and Theorem~\ref{top}, it follows that $N_{c_2}$ is obtained by gluing a 2-handle to the ball $N_{3.5}=\psi^{-1}([0,0.5])$. Therefore, $\partial N_{c_2}$ is obtained from the sphere $\partial N_{3.5}$ by an  integer Dehn  surgery along the knot $\lambda=\partial N_{3.5}\cap W^s_{\sigma_2}$. But $\partial N_{c_2}=\Sigma$, and $\Sigma$, as proved above, is diffeomorphic either to $\mathbb S^2\times \mathbb S^1$ or to $\mathbb S^3$. It is proven in~[\onlinecite[Theorems~3,2]{GoLu}] that nontrivial Dehn surgery along nontrivial knot never yields the   sphere or  $\mathbb S^2\times S^1$. Then, the knot $\lambda$ is trivial in $\partial N_{3.5}$. The manifolds $\partial N_{3.5}\setminus \lambda$ and $\Sigma\setminus \lambda_u$ are diffeomorphic by means of a  diffeomorphism that assigns to a point $x\in \partial N_{3.5}\setminus \lambda$ the  point $\mathcal O_x\cap (\Sigma\setminus \lambda_u)$. By [\onlinecite[Theorem~1]{GoLu}), knots on  the sphere are determined by their complements.  For the case $k=5$ this immediately implies that  the knot $\lambda_u$ is trivial.

Consider the case $k=4$. In this case, the knot $\lambda$ belongs to the sphere $\partial N_{3.5}$, and the knot $\lambda_u$ belongs to the manifold $\Sigma$, which is diffeomorphic to $\mathbb S^2\times \mathbb S^1$. Let $P\subset \Sigma$ be a compact tubular neighborhood of the knot $\lambda_u$. By what was proved above, the manifold $Q=\Sigma\setminus {\rm int}\,P$ is diffeomorphic to the complement in the sphere $\partial N_{3.5}$ to an open tubular neighborhood of the trivial knot $\lambda$. Hence, $Q$ is diffeomorphic to a solid torus. There is only one way to obtain $\mathbb S^2\times \mathbb S^1$ by gluing two solid tori, in which case the gluing homeomorphism maps the meridian of the first solid torus to the meridian of the second. Therefore, the meridians of the solid tori $P, Q$ coincide. Let $x\in \mathbb S^2$, $D\in \mathbb S^2$ be a disk containing the point $x$ inside, $P_0=D\times \mathbb S^1$, $Q_0=(\mathbb S^2\setminus {\rm int}\, D)\times \mathbb S^1$, and $h_0: P\to P_0$ be a homeomorphism such that $h_0(\lambda_u)=x\times \mathbb S^1$. Since $h_0$ is a homeomorphism of solid tori, it maps the meridian $P$ to the meridian $P_0$. It follows that the homeomorphism $h_0|_{\partial P}=h_0|_{\partial Q}$ extends to a homeomorphism $h_1: Q\to Q_0$ such that $h_1|_{\partial Q}=h_0|_{\partial Q}$ (see, for example, [~\onlinecite[Ch.2, \S 5, Ex.5]{Ro}]). Then the homeomorphism $h: \Sigma_{f^t}\to \mathbb S^2\times \mathbb S^1$, defined by the formula
$$h(x)=\begin{cases}h_0(x), x\in P\cr h_1(x), x\in Q, \end{cases},$$ maps the knot $\lambda_u$ to the knot $x\times \mathbb S^1$. Therefore, the knot $\lambda_u$ is trivial. The lemma is proven.
\end{proof}

Note that in case $k=5$,  Jordan-Brown Theorem and Generalized Schoenflies Theorem imply that the two-dimensional sphere $\Lambda_s$ divides the sphere $\Sigma$ into two connected components, the closure of each of which is a three-dimensional ball.

Let $\lambda, \Lambda\subset \Sigma$ be a smoothly embedded 1-and 2-spheres that  intersect each other transversely. We fix the orientation of all three manifolds. If, at a point $x\in \lambda\cap \Lambda$, the orienting tangent frames to $\lambda, \Lambda$ generate a tangent space to $\Sigma$ whose orientation coincides with the chosen orientation of $\Sigma$, then we assign $+1$ to the point $x$. Otherwise, we assign $-1$ to the point $x$. The sum of all such numbers for all points in $\lambda\cap \Lambda$ is called the {\it intersection index} of the pair $\lambda, \Lambda$. The intersection index is a homological invariant. Therefore, it follows directly from Lemma~\ref{seq1} that the intersection index of the knot $\lambda_u$ and the sphere $\Lambda_s$ is equal to one for $k=4$ and  to zero for $k=5$. The trajectory of any point $x\in \lambda_u\cap \Lambda_s$ is the heteroclinic trajectory lying in the intersection $W^u_{\sigma_2}\cap W^s_{\sigma_1}$. Thus, the following statement is proven.

\begin{cor}\label{nmb_of_het} If $f^t\in G_{1,2,k}$ and $k=4(5)$, then the flow $f^t$ has an odd $($even$)$ number of heteroclinic trajectories.
\end{cor}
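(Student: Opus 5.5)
The plan is to count heteroclinic trajectories through intersection points on the cross-section $\Sigma$ and then take the parity of the algebraic intersection number.

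First, I will identify the heteroclinic trajectories with points of $\lambda_u\cap\Lambda_s$. Every trajectory in $W^u_{\sigma_2}\cap W^s_{\sigma_1}$ passes from $\sigma_2$ to $\sigma_1$, so $\varphi$ decreases from $2$ to $1$ along it. Hence it crosses the level $\Sigma=\varphi^{-1}(c_2)$, $c_2\in(1,2)$, exactly once, and the crossing point lies in $\lambda_u\cap\Lambda_s$. Conversely, each point of $\lambda_u\cap\Lambda_s$ determines such a trajectory. So the number of heteroclinic trajectories equals the cardinality of $\lambda_u\cap\Lambda_s$.

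This set is finite, and the intersection is transverse in $\Sigma$. Transversality of $W^u_{\sigma_2}$ and $W^s_{\sigma_1}$ in $M^4$, together with the flow's transversality to $\Sigma$, gives transversality of $\lambda_u$ and $\Lambda_s$ in $\Sigma$. Compactness then gives finiteness. By Lemma~\ref{seq1}, both $\lambda_u$ and $\Lambda_s$ are smooth.

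Next, I will compute the algebraic intersection index, which equals the cardinality modulo 2, so the parity of the number of trajectories is the parity of the index. The index depends only on the homology classes $[\lambda_u]\in H_1(\Sigma)$ and $[\Lambda_s]\in H_2(\Sigma)$, and I will treat the two cases separately.

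For $k=5$, Proposition~\ref{hd} gives $\Sigma\cong\mathbb S^3$. Then $H_2=0$, so the index is $0$ and the number of trajectories is even.

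For $k=4$, we have $\Sigma\cong\mathbb S^2\times\mathbb S^1$, and by Lemma~\ref{seq1} there is a homeomorphism taking $\lambda_u$ to $\{x\}\times\mathbb S^1$, which generates $H_1=\mathbb Z$. Since $\Lambda_s$ does not separate $\Sigma$, its class is nonzero mod 2. I plan to see this by choosing a closed curve that crosses $\Lambda_s$ once transversally, which exists because the complement is connected. Intersection pairing mod 2 is nondegenerate by Poincaré duality, so in $H_2(\Sigma;\mathbb Z_2)=\mathbb Z_2$ the class $[\Lambda_s]$ is the generator $[\mathbb S^2\times\{y\}]$. The generator meets $\{x\}\times\mathbb S^1$ in one point, so the index is odd.

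The main obstacle is a technical point: the trivializing homeomorphism in Lemma~\ref{seq1} is only topological, while intersection numbers are needed in the smooth category. I would avoid this by working purely homologically with $\mathbb Z_2$ coefficients. Homeomorphisms preserve homology classes, and the mod 2 intersection of transverse smooth representatives is a homological invariant. This sidesteps any need for smoothness of the trivializing map.
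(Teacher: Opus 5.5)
Your proposal is correct and follows essentially the same route as the paper: identify heteroclinic trajectories with the transverse points of $\lambda_u\cap\Lambda_s$ on $\Sigma$, and compute the homologically invariant intersection index from Lemma~\ref{seq1}. That index is $0$ for $k=5$ because $\Sigma\cong\mathbb S^3$, and odd for $k=4$ because $\lambda_u$ represents the generator of $H_1$ and the nonseparating sphere $\Lambda_s$ the generator of $H_2$; your use of $\mathbb Z_2$ coefficients and of an explicit curve crossing $\Lambda_s$ once fills in details that the paper leaves implicit (it simply asserts the index is one).
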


\begin{cor}\label{inters1} The schemes of flows $f^t, {f'}^t\in G_{1,2,4}$  each of which  have exactly one heteroclinic trajectory  are equivalent.
\end{cor}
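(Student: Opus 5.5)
The plan is to construct, for each flow, a homeomorphism of its cross-section onto the standard model $\mathbb S^2\times\mathbb S^1$. This homeomorphism should send $\lambda_u$ to $\{x_0\}\times\mathbb S^1$ and $\Lambda_s$ to $\mathbb S^2\times\{y_0\}$. Composing the map for $f^t$ with the inverse of the map for ${f'}^t$ then gives the required equivalence $h:\Sigma\to\Sigma'$ of schemes.

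\textbf{Step 1: straighten the sphere.} Let $f^t\in G_{1,2,4}$ have a unique heteroclinic trajectory. By Lemma~\ref{seq1}, $\Lambda_s$ is a smooth non-separating $2$-sphere in $\Sigma\cong \mathbb S^2\times\mathbb S^1$. By definition, the single heteroclinic trajectory means that $\lambda_u$ meets $\Lambda_s$ in exactly one point. Cut $\Sigma$ along $\Lambda_s$ to obtain a compact manifold $V$ with two boundary spheres $\Lambda_s^{\pm}$. Cap both boundary components with $3$-balls. The capped manifold has $\pi_1$ generated by loops that avoid $\Lambda_s$. Since $\Lambda_s$ does not separate and $\pi_1(\Sigma)=\mathbb Z$ is generated by a loop crossing $\Lambda_s$ once, the capped manifold is a simply connected closed $3$-manifold, hence $\mathbb S^3$. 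Here one may alternatively use that $\Sigma$ is irreducible apart from its $\mathbb S^2$ factor, to avoid invoking the Poincar\'e conjecture. Therefore $V\cong \mathbb S^2\times[0,1]$, by the generalized Schoenflies theorem applied to $\mathbb S^3$ minus two balls. This yields a homeomorphism $g:\Sigma\to \mathbb S^2\times\mathbb S^1$ with $g(\Lambda_s)=\mathbb S^2\times\{y_0\}$.

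\textbf{Step 2: straighten the arc.} Now $a=g(\lambda_u)\setminus \text{(the intersection point)}$ is a properly embedded arc in $\mathbb S^2\times[0,1]$. Its endpoints are $(z,0)$ and $(z,1)$, matching across the gluing. This is the main obstacle: I must show that such an arc is unknotted, i.e.\ isotopic rel endpoints to the straight arc $\{z\}\times[0,1]$. The same isotopy must also be compatible at the two ends, so that it descends to $\mathbb S^2\times\mathbb S^1$.

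I would argue via the light bulb trick. The arc can be isotoped, fixing $\partial$, so that it is monotone except for crossings in a projection. Each crossing change is then realized by swinging a strand around the $\mathbb S^2$ factor, using the free endpoint on a boundary sphere, as in the standard light bulb theorem [\onlinecite{Ro}]. In $\mathbb S^2\times[0,1]$ every arc with one endpoint on each boundary sphere is therefore isotopic rel boundary to a straight arc.

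Alternatively, the proof can be kept parallel to that of Lemma~\ref{seq1}. There, $\lambda_u$ is a trivial knot, so there is a sphere $\mathbb S^2\times\{y\}$ that $\lambda_u$ crosses once. Here $\Lambda_s$ itself is such a sphere. In the model, $\mathbb S^2\times\{y_0\}$ and $\{x\}\times\mathbb S^1$ are both standard, and the characterization of trivial knots via a single transverse intersection pins $\lambda_u$ down relative to $\Lambda_s$.

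\textbf{Step 3: compose.} After the isotopy, which is ambient and supported away from a neighborhood of $\Lambda_s$ (or adjusted near it by a radial map), we obtain $\tilde g:\Sigma\to\mathbb S^2\times\mathbb S^1$ with $\tilde g(\Lambda_s)=\mathbb S^2\times\{y_0\}$ and $\tilde g(\lambda_u)=\{z\}\times\mathbb S^1$. Doing the same for ${f'}^t$ and setting $h=\tilde g'^{-1}\circ\tilde g$ proves the corollary.
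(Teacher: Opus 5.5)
Your proof is correct, but it takes a different route from the paper's. You normalize in two stages. First you identify $(\Sigma,\Lambda_s)$ with $(\mathbb S^2\times\mathbb S^1,\mathbb S^2\times\{y_0\})$ by cutting along $\Lambda_s$ and recognizing the capped manifold as $\mathbb S^3$ (Poincar\'e conjecture or prime decomposition). Then you straighten the arc $\lambda_u\setminus\Lambda_s$ in $\mathbb S^2\times[0,1]$ rel boundary by the light bulb trick.

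The paper builds neither a normal form nor an isotopy. It passes to the covering $p:\mathbb R^3\setminus\{O\}\to\Sigma$ and takes the region $A$ between two consecutive lifts of $\Lambda_s$. It removes from $A$ regular neighborhoods of its two boundary spheres and of the single lifted arc of $\lambda_u$. What remains is bounded by a piecewise smooth $2$-sphere in $\mathbb R^3$, hence is a ball by the Schoenflies (Alexander) theorem. So $\Sigma=N_{\Lambda_s}\cup N_{\lambda_u}\cup D$ with $D$ a $3$-ball. Any homeomorphism $N_{\Lambda_s}\cup N_{\lambda_u}\to N_{\Lambda'_s}\cup N_{\lambda'_u}$ carrying sphere to sphere and knot to knot then extends over $D$ by the Alexander trick.

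That single use of Alexander's theorem in $\mathbb R^3$ does the work of both your steps. It replaces the Poincar\'e conjecture in your Step 1. It is also exactly the statement that the exterior of an arc in $\mathbb S^2\times[0,1]$ joining the two boundary spheres is a ball, which is the cleanest proof of your Step 2. It directly yields a homeomorphism fixing $\partial(\mathbb S^2\times[0,1])$ pointwise, which is what you need to descend to $\mathbb S^2\times\mathbb S^1$, with no crossing-change argument. Your route gives a bit more, namely an explicit model and an isotopy in Step 2, but at the cost of heavier input.

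Two small points. In Step 1, simple connectivity should come from van Kampen: $\pi_1(\Sigma)\cong\pi_1(V)*\mathbb Z$, since $\pi_1(\Lambda_s)=1$, which forces $\pi_1(V)=1$. Knowing that a generator crosses $\Lambda_s$ once does not by itself show that loops in $V$ are trivial in $V$. Also, your alternative in Step 2 does not work on its own. Quoting the characterization ``trivial iff it meets some level sphere once'' only gives a homeomorphism sending $\lambda_u$ to a fiber, with no control over the image of $\Lambda_s$. What you need is the proof of that characterization, carried out in the complement of the level sphere, which is your main light bulb argument.
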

\begin{proof} Since $\Lambda_s$ and $\lambda_u$ intersect  each other transversely at a unique point, there exist compact tubular neighborhoods $N_{_{\Lambda_s}}, N_{\lambda_u}\subset \Sigma$ of the sphere $\Lambda_s$ and the knot $\lambda_u$, respectively, such that the intersection $N_{_{\Lambda_s}}\cap N_{\lambda_u}$ is diffeomorphic to $B^2\times [0,1]$, and the fibers of the two-dimensional and one-dimensional foliations defining the structure of the direct product lie on the fibers of the foliations transversal to $\lambda_u, \Lambda_s$, determined by the structure of the tubular neighborhoods $N_{_{\Lambda_s}}, N_{\lambda_u}$ (See, for example, [\onlinecite[Ch.4, Section 6, Exercise 2]{Hi}]).

Since the characteristic cross-section $\Sigma$ is diffeomorphic to $\mathbb S^2\times \mathbb S^1$, there exists a covering $p:\mathbb R^3\setminus \{O\}\to \Sigma$. Since the sphere $\Lambda_s$ does not divide $\Sigma$, its complete preimage $p^{-1}(\Lambda_s)$ is a countable set of pairwise disjoint smoothly embedded 2-spheres, each of which bounds a ball in $\mathbb R^3$ whose interior contains the point $O$. Let $A\subset \mathbb R^3$ be the compact subset bounded by the pair of spheres $S_0, S_{1}\subset p^{-1}(\Lambda_s)$ such that ${\rm int}\, A\cap p^{-1}(\Lambda_s)=\emptyset$. Set $$\widehat N_{\Lambda_s}=p^{-1}(N_{\Lambda_s})\cap A,\  \widehat N_{\lambda_u}=p^{-1}(N_{\Lambda_s})\cap A,$$ $$\widehat D=A \setminus {\rm int}(\widehat N_{\Lambda_s}\cup \widehat N_{\lambda_u}).$$ The boundary of $\widehat D$ is a piecewise smooth sphere $S^2$. The Generalized Schoenflies Theorem implies that $\widehat D$ is homeomorphic to the ball $\mathbb B^3$. We set $D=p(\widehat D)$. Since the restriction of $p$ to ${\rm int A}$ is a homeomorphism, $D$ is homeomorphic to the ball.

We denote by $N_{_{\Lambda'_s}}, N_{\lambda'_u}, D'\subset \Sigma'$ the sets corresponding to the flow ${f'}^t$ similar  to the sets $N_{_{\Lambda_s}}, N_{\lambda_u}, D\subset \Sigma$. Let $$\xi: N_{_{\Lambda_s}}\cup  N_{\lambda_u}\to N_{_{\Lambda'_s}}\cup  N_{\lambda'_u}$$ be a diffeomorphism such that
$\xi(\Lambda_s)=\Lambda'_s, \xi(\lambda_u)=\lambda'_u$. By Alexander Theorem, any homeomorphism from the boundary of a ball can be extended to the ball, so the homeomorphism $\xi$ can be extended to a homeomorphism $\Xi:D\to D'$ such that $\Xi|_{\partial D}=\xi|_{\partial D}$. Therefore, there is a homeomorphism $h: \Sigma\to \Sigma'$ such that $h(\Lambda_s)=\Lambda_s'$, $h(\lambda_u)=\lambda_u'$, which means that the flow schemes $f^t, {f'}^t$ are equivalent.
\end{proof}

One more corollary of Lemma~\ref{seq1}  can be proved similarly to~[\onlinecite[Theorem~4]{MeZh13}]. 

\begin{cor}\label{flat} The closure ${\rm cl}\, W^s_{\sigma_2}$ of the set $W^s_{\sigma_2}$ is a locally flat 2-sphere in $M^4$.
\end{cor}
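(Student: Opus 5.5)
We prove that ${\rm cl}\, W^s_{\sigma_2}=W^s_{\sigma_2}\cup\{\alpha\}$, where $\alpha$ is the unique source (Remark~\ref{nw}), is a 2-sphere that is locally flat at every point. Since $W^s_{\sigma_2}$ is two-dimensional and $\sigma_2$ is the only saddle of index $2$, every point of $W^s_{\sigma_2}\setminus\{\sigma_2\}$ lies on a trajectory leaving the source $\alpha$. Indeed, no trajectory can come from $\sigma_1$: transversality forces $\dim(W^u_{\sigma_1}\cap W^s_{\sigma_2})=1+2-4<1$. Hence ${\rm cl}\,W^s_{\sigma_2}=W^s_{\sigma_2}\cup\{\alpha\}$ is the one-point compactification of an open 2-disk, i.e.\ a topological 2-sphere. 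As in the proof of Lemma~\ref{seq1}, we work in $N_{c_2}=M^4\setminus{\rm int}\,M_{c_2}$. By Proposition~\ref{SM}, this is the ball $B=N_{3.5}=\psi^{-1}([0,0.5])$ (a neighbourhood of $\alpha$) with a single 2-handle $H$ attached along the knot $\lambda=\partial B\cap W^s_{\sigma_2}$.

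Local flatness holds automatically at every point other than $\alpha$. $W^s_{\sigma_2}$ is a smoothly embedded (injectively immersed, and properly embedded near $\sigma_2$ by the Morse chart) submanifold, so it is locally flat at each of its points. The key step is therefore $\alpha$. Near $\alpha$ the set ${\rm cl}\,W^s_{\sigma_2}\cap B$ is the cone over $\lambda\subset\partial B\cong\mathbb S^3$ with vertex $\alpha$. To see this, use the Morse coordinates at $\alpha$ and the radial structure of the trajectories of $f^{-t}$ in $B$: the trajectories through points of $\lambda$ run from $\alpha$ to $\partial B$ and sweep out exactly the cone. A cone over a knot in $S^3$ is locally flat at the vertex precisely when the knot is trivial.

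In the proof of Lemma~\ref{seq1} we showed, via Gordon--Luecke, that $\lambda$ is a trivial knot in $\partial N_{3.5}$: Dehn surgery on $\lambda$ yields $\Sigma$, which is $\mathbb S^3$ or $\mathbb S^2\times\mathbb S^1$, so the surgery is trivial or the knot is. We then take an embedded disk $d\subset\partial B$ with $\partial d=\lambda$. Using the conical (radial) structure we build a homeomorphism of pairs $(B,\,{\rm cone}(\lambda))\cong(\mathbb B^4,\,\mathbb B^2)$ by coning off a homeomorphism $(\partial B,\lambda)\to(\mathbb S^3,\mathbb S^1)$. This gives local flatness at $\alpha$, and hence ${\rm cl}\,W^s_{\sigma_2}$ is a locally flat 2-sphere.

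The main obstacle is to justify that, near $\alpha$, the closure really is the topological cone over $\lambda$ in a flow-compatible way. This requires that the flow in $B$ is topologically conjugate to the linear radial flow, which holds after a suitable choice of Morse coordinates, e.g.\ via Grobman--Hartman. It also requires that the trivializing homeomorphism of $(\partial B,\lambda)$ extends conically. I would handle both points exactly as in~[\onlinecite[Theorem~4]{MeZh13}], using a homeomorphism from $B\setminus\{\alpha\}$ to $\partial B\times(0,1]$ along the trajectories. A secondary point is whether case $k=5$ needs the same argument; it does, since $\lambda$ is trivial in both cases.
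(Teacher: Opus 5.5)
Your proposal is correct and is essentially the paper's argument. The paper only says the corollary follows from Lemma~\ref{seq1} in the same way as [\onlinecite[Theorem~4]{MeZh13}], the relevant input being the triviality of the knot $\lambda=\partial N_{3.5}\cap W^s_{\sigma_2}$ obtained there via Dehn surgery; your description of ${\rm cl}\,W^s_{\sigma_2}\cap B$ as the cone over $\lambda$ with vertex $\alpha$, followed by coning off a trivializing homeomorphism $(\partial B,\lambda)\to(\mathbb S^3,\mathbb S^1)$, is that argument written out. You do not need Grobman--Hartman: the trajectory map $B\setminus\{\alpha\}\to\partial B\times(0,1]$ already extends to a homeomorphism from the cone on $\partial B$ onto $B$, because $\partial B$ is compact and $\varphi$ increases uniformly along backward trajectories away from $\alpha$.
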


We note that in~[\onlinecite{MeZh13, GuSa24}], an algorithm for constructing a series of polar flows with two saddle equilibria, the closures of whose invariant manifolds are wild two-dimensional spheres.

\section{Necessary and Sufficient Conditions for Topological Equivalence of Flows from $G_{1,2,k}$}

In this section, we prove that the flow scheme $f^t\in G_{1,2,k}$ introduced in Definition~\ref{sh-exdef} is a complete topological invariant of it, that is, the following theorem holds.

\begin{theorem}\label{cond} Flows $f^t, {f'}^t\in G_{1,2,k}, k\in \{4,5\}$, are topologically equivalent if and only if their schemes are equivalent. % there exists a homeomorphism $h: \Sigma\to \Sigma'$ such that $h(\Lambda_s)=\Lambda_s'$, $h(\lambda_u)=\lambda_u'$.
\end{theorem}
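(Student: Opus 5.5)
Necessity is immediate. If $H:M^4\to {M'}^4$ is a topological equivalence, it sends saddles to saddles of the same index, because it maps invariant manifolds to invariant manifolds. Hence $H(W^s_{\sigma_1})=W^s_{\sigma'_1}$ and $H(W^u_{\sigma_2})=W^u_{\sigma'_2}$. The image $\widehat\Sigma=H(\Sigma)$ is a topological submanifold, and it is a global cross-section for the restriction of ${f'}^t$ to $M'^4\setminus({\rm cl}\,W^u_{\sigma'_1}\cup{\rm cl}\,W^s_{\sigma'_2})$. By Proposition~\ref{indep}, the triple $\{\widehat\Sigma,H(\Lambda_s),H(\lambda_u)\}$ is equivalent to the scheme of ${f'}^t$. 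Composing $H|_\Sigma$ with the homeomorphism from Proposition~\ref{indep} gives the required $h$.

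For sufficiency, let $h:\Sigma\to\Sigma'$ realize the equivalence of schemes. The plan is to extend $h$ along trajectories piece by piece. Set $V=M^4\setminus({\rm cl}\,W^u_{\sigma_1}\cup{\rm cl}\,W^s_{\sigma_2})$. Since $\Sigma$ is a global cross-section of $f^t|_V$, the map
\[
H(f^t(x))={f'}^{t}(h(x)),\qquad x\in\Sigma,\ t\in\mathbb R,
\]
defines a conjugacy $V\to V'$. It maps $W^s_{\sigma_1}\cap V$ and $W^u_{\sigma_2}\cap V$ to their primed counterparts, because $h(\Lambda_s)=\Lambda'_s$ and $h(\lambda_u)=\lambda'_u$. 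This $H$ does not extend continuously to the attractor $A={\rm cl}\,W^u_{\sigma_1}$ and the repeller $R={\rm cl}\,W^s_{\sigma_2}$ as it stands. The remaining work is to first modify $h$ (equivalently, reparametrize along orbits) near $\Lambda_s$ and $\lambda_u$ so that it becomes compatible with the local linear structure of the saddles, and then extend.

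The extension goes as follows.
\begin{enumerate}
\item Near $\sigma_1$, use the Morse coordinates of Proposition~\ref{SM}. The set $W^s_{\sigma_1}\setminus\sigma_1$ is foliated by trajectories crossing $\Lambda_s$, and a neighbourhood of $\Lambda_s$ in $\Sigma$ is a trivial interval bundle $\Lambda_s\times[-1,1]$. Flowing this collar toward $\sigma_1$, its orbits accumulate on the two separatrices of $W^u_{\sigma_1}$.
\item Isotope $h$ in a tubular neighbourhood of $\Lambda_s$ so that it respects this product structure and carries the fibres $\{y\}\times[-1,1]$ to the corresponding fibres for ${f'}^t$. This is possible because $\Lambda_s$ has a trivial normal bundle and $h|_{\Lambda_s}$ may be taken fixed.
\item Define $H$ on $W^u_{\sigma_1}$ as the conjugacy of the one-dimensional unstable separatrices given by the fibre parameter. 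Then define it on the basins of the sinks by continuity. The linearization argument in the style of Grobman--Hartman, together with the fibre-preserving form of $h$, gives continuity at $W^u_{\sigma_1}$.
\item Treat $\sigma_2$ and $\lambda_u$ symmetrically using $f^{-t}$: the normal bundle of $\lambda_u$ is $\lambda_u\times D^2$, and its orbits accumulate on $W^s_{\sigma_2}$.
\end{enumerate}
The sink(s) and the source are then handled by cone extension, since near a sink the flow is conjugate to a linear one on a ball.

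The main obstacle is that the two modifications (near $\Lambda_s$ and near $\lambda_u$) interact at the heteroclinic points of $\Lambda_s\cap\lambda_u$. There, the tubular neighbourhoods must be chosen compatibly, as in the proof of Corollary~\ref{inters1}: a product chart $B^2\times[0,1]$ at each intersection point in which the fibrations of the two neighbourhoods are the coordinate foliations, with the same construction in $\Sigma'$. I would first make $h$ conjugate these local product charts, using transversality and the fact that $h$ preserves both $\Lambda_s$ and $\lambda_u$. I would then extend the fibre-compatible adjustment over the rest of the tubular neighbourhoods by an isotopy supported away from the intersection points. This requires only isotopy extension on the $2$-sphere $\Lambda_s$ minus discs and on arcs of $\lambda_u$, which is routine in dimension $3$.

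With compatible $h$, continuity of $H$ at points of ${\rm cl}\,W^u_{\sigma_1}$ and ${\rm cl}\,W^s_{\sigma_2}$ follows from the standard convergence of orbits near hyperbolic saddles, as in the works of Grines, Gurevich and Medvedev cited above.
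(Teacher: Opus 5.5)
Your necessity argument is fine, and your overall plan (conjugate on $V$ by flowing $h$, then correct $h$ near $\Lambda_s$ and $\lambda_u$ to match local models at the saddles) is the paper's plan. The problem is that the way you dispose of what you rightly call the main obstacle does not work.

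\textbf{Compatibility at heteroclinic points is dynamical, not a choice of charts.} The structures on $\Sigma$ that govern continuity are not tubular-neighbourhood fibrations you are free to pick.
\begin{itemize}
\item Near $\Lambda_s$ the relevant structure is the family of spheres parallel to $\Lambda_s$ given by the exit time from a neighbourhood of $\sigma_1$ (the traces of the stable $3$-balls of $\sigma_1$). Continuity at $W^u_{\sigma_1}$ needs $h$ to carry these to the corresponding spheres for ${f'}^t$, with a reparametrization independent of the point of $\Lambda_s$.
\item Near $\lambda_u$ it is the family of circles parallel to $\lambda_u$ cut out by the unstable $2$-discs of $\sigma_2$. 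Continuity at $W^s_{\sigma_2}$ needs $h$ to carry these to the corresponding circles for ${f'}^t$.
\end{itemize}
These families are dictated by the flow; only the transverse fibres $\{y\}\times[-1,1]$ and $\{v\}\times D^2$ are free. So for a chart at $p\in\Lambda_s\cap\lambda_u$ whose coordinate foliations are your two fibrations to be compatible with the dynamics, two things must hold near $p$. The unstable arcs of $\sigma_1$ must lie on the circles of $\sigma_2$, and the stable discs of $\sigma_2$ must lie on the spheres of $\sigma_1$. This is the consistency of canonical neighbourhoods, which the paper obtains in Proposition~\ref{Step1} by building the invariant foliations of both saddles together with the $\lambda$-lemma. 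Corollary~\ref{inters1} is a purely topological statement in $\Sigma$ and gives nothing of this kind.

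Moreover, the map in such a chart is not yours to choose. On the arc of $\lambda_u$ through $p$ it is dictated by the conjugacy on $W^u_{\sigma_1}$, and on the disc of $\Lambda_s$ around $p$ by the conjugacy on $W^s_{\sigma_2}$. The same two conjugacies apply at every heteroclinic point; these are the maps $\xi^u_p,\xi^s_p$ of Proposition~\ref{local_conj}, which make $\chi_1=\chi_2$ on $V_1\cap V_2$.

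\textbf{The final extension is not routine isotopy extension.} Your fibre-preserving condition in steps 2 and 4 hides the reason. A fibre-preserving map of $\lambda_u\times D^2$ may rotate the discs $\{v\}\times D^2$ as $v$ runs around $\lambda_u$. Continuity at $W^s_{\sigma_2}$ forbids this: orbits close to a point of $W^s_{\sigma_2}$ cross $\Sigma$ near every point of $\lambda_u$, so the induced map on normal directions of $\lambda_u$ must asymptotically not depend on the point of $\lambda_u$. Equivalently, the corrected map must send the longitudes $\partial B^u_{\sigma_2,y}$ of $N_{\lambda_u}$ (the framing of the $2$-handle of $\sigma_2$) to the corresponding longitudes for ${f'}^t$.

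A scheme homeomorphism $h$ may send this longitude to the primed one plus $n\neq 0$ meridians. Correcting this does not move $\Lambda_s$ or $\lambda_u$; it must change the normal framing of $\lambda_u$ by $n$, that is, act on $\partial N_{\lambda_u}$ as the $n$-th power of a meridional Dehn twist. No map supported in $N_{\lambda_u}$ does this. Whether it is possible at all, compatibly with $\Lambda_s$ and with the forced maps in the charts, depends on the global position of $\Lambda_s$ and $\lambda_u$ in $\Sigma$.

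This is the role of the longitude $\widetilde\lambda_u$ in Proposition~\ref{Step0} (via Lemma~\ref{dehn}). It is also the role of the meridian--longitude argument in Proposition~\ref{Step4}, where $\chi_2\eta^{-1}|_{\partial P'_2}$ is shown to be isotopic to the identity. Nothing in your plan plays this role.
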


Theorem~\ref{clas} is an immediate consequence of Theorem~\ref{cond} and Corollaries~\ref{nmb_of_het},~\ref{inters1}. Theorem~\ref{clas+} follows from Theorem~\ref{cond}, Lemma~\ref{cp2}, proved below, and Lemma~\ref{rea}, the proof of which (independent of the proof of Theorem~\ref{cond}) is presented in the next section.

The necessity of the conditions of Theorem~\ref{cond} follows directly from the definition of topological equivalence and Proposition~\ref{indep}. The proof of sufficiency is presented below in the sequence of Propositions~\ref{Step1}-\ref{Step4}.

\subsection{Two auxiliary topological lemmas}

\begin{lemma}\label{cp2} Let $\Lambda, \Lambda'\subset S^3$ be two-dimensional spheres smoothly embedded in $S^3$, and let $\lambda, \lambda'\subset S^3$ be smooth trivial knots such that the intersection of the sphere $\Lambda (\Lambda')$ and the knot $\lambda (\lambda')$ is transversal and consists of $b (b')$ points. If $b=b'$, then there exists a homeomorphism $h: S^3\to S^3$ such that $h(\Lambda)=\Lambda', h(\lambda)=\lambda'$.
\end{lemma}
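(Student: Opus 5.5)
The plan is to put each pair $(\Lambda,\lambda)$ into a normal form determined only by $b$. Then compose the normalizing homeomorphism for $(\Lambda,\lambda)$ with the inverse of the one for $(\Lambda',\lambda')$.

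First, by the Generalized Schoenflies Theorem (as noted after Lemma~\ref{seq1}), $\Lambda$ splits $S^3$ into two closed 3-balls $B_1,B_2$ with $\partial B_1=\partial B_2=\Lambda$. Since $\lambda$ is transverse to $\Lambda$ and meets it in $b$ points, $b$ is even. For $b=0$, $\lambda$ lies in the interior of one ball, say $B_1$. Because $\lambda$ is a trivial knot in $S^3$ and $B_2$ is disjoint from it, $\lambda$ is also trivial in $B_1$ (we may isotope its spanning disk off $B_2$ by shrinking $B_2$ to a small ball far from the disk). Hence $(S^3,\Lambda,\lambda)$ is standard, and the case $b=0$ is settled.

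For $b=2m>0$, the curve $\lambda$ is cut into arcs $a_1,\dots,a_m\subset B_1$ and $a'_1,\dots,a'_m\subset B_2$, each properly embedded. The key step is to show that the tangles $(B_1,\bigcup a_i)$ and $(B_2,\bigcup a'_j)$ are trivial, i.e.\ the arcs are simultaneously boundary parallel. I would argue this with the spanning disk $\Delta$ of $\lambda$ guaranteed by triviality. Put $\Delta$ in general position with respect to $\Lambda$. Remove the closed curves of $\Delta\cap\Lambda$ by innermost-disk arguments, using that both sides of $\Lambda$ are balls and hence irreducible. Then $\Delta\cap\Lambda$ consists of arcs joining the points of $\lambda\cap\Lambda$. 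An outermost arc of $\Delta\cap\Lambda$ on $\Delta$ cuts off a disk in $\Delta$ lying in $B_1$ or $B_2$, which exhibits one arc $a_i$ (or $a'_j$) as parallel to an arc $\beta\subset\Lambda$ through a disk disjoint from the other arcs. Pushing $a_i$ across this disk to just past $\Lambda$ reduces $b$ by $2$ while preserving triviality of $\lambda$. By induction on $m$ we obtain a standard position.

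To make this a classification rather than just a reduction, I would reverse the procedure. I would show that $(S^3,\Lambda,\lambda)$ is homeomorphic to the model in which $\Lambda$ is the equatorial sphere and $\lambda$ is a planar unknot in a vertical plane crossing the equator $2m$ times. This needs an induction statement carried along: the pair with $b=2m$ is obtained from the pair with $b=2m-2$ by a finger move of a subarc of $\lambda$ across $\Lambda$ along an arc $\beta\subset\Lambda$ whose interior misses $\lambda$. Any two such finger moves are equivalent by a homeomorphism of $\Lambda$ that extends to $S^3$ preserving the previous configuration. Such an extension exists because $\Lambda\setminus\lambda$ is a sphere with $2m-2$ punctures and the relevant arcs can be matched by homeomorphisms of $\Lambda$ fixing the punctures setwise, extended conewise into both balls using triviality of the tangles.

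The main obstacle is this last uniqueness step. One must check that the embedding of the tangle in each ball, together with the gluing pattern on $\Lambda$, is forced. A priori, different pairings of the $2m$ points by arcs in $B_1$ versus $B_2$ could produce inequivalent configurations with the same $b$. The resolution is that, since $\lambda$ is a single connected unknot and both tangles are trivial, the pairings form a single cycle. Such a pair of trivial tangles glued along $\Lambda$ is determined up to homeomorphism by a planar ``meander'' pattern, and I would show that every meander yields the standard one after a homeomorphism of the balls. This uses the Alexander trick to extend homeomorphisms of $\Lambda$ over $B_1$ and $B_2$, applied as in Corollary~\ref{inters1}.
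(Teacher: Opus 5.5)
\textbf{Verdict: there is a genuine gap, and it cannot be closed, because the statement is false for every even $b\ge 4$.}

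\textbf{Where your argument fails.} Your key claim is that both tangles $(B_1,\bigcup a_i)$ and $(B_2,\bigcup a'_j)$ are trivial, and that claim is false. A counterexample:
let $\lambda$ be a round unknot in the plane $z=0$;
let $\alpha$ be an arc in $\{z>0\}$ joining two points of $\lambda$ and containing a local trefoil;
let $\Lambda=\partial N(\alpha)$ for a thin regular neighbourhood $N(\alpha)$.
Then $\Lambda$ is a smooth sphere meeting the trivial knot $\lambda$ transversally in $b=4$ points, and the tangle inside $N(\alpha)$ is trivial. The exterior of the outer tangle, however, is the exterior of the theta-curve $\lambda\cup\alpha$. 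A sphere around the local knot cuts a trefoil exterior off it along a meridional annulus. By van Kampen, its fundamental group therefore contains the trefoil group and is not free, whereas the exterior of a trivial tangle is a handlebody. If $\alpha$ is unknotted instead, you get the standard $2$-bridge position, in which both tangles are trivial. So no homeomorphism of $S^3$ carries one pair to the other. Adding small extra fingers gives counterexamples for every even $b\ge 4$. Only $b\in\{0,2\}$ are rigid: your $b=0$ argument is fine, and for $b=2$ each arc is a connected summand of the unknot, hence unknotted.

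\textbf{Where the induction breaks.} An outermost arc of $\Delta\cap\Lambda$ shows only that one arc is $\partial$-parallel, on one side. After the push, the inductive hypothesis says the reduced pair is standard. But on the side that received the finger, the arcs $a'_j,a'_{j'}$ have been merged into one arc, and triviality of the merged tangle says nothing about the original tangle. In the example, both outermost arcs of the flat spanning disc cut off half-discs inside $N(\alpha)$. One push already gives the standard $b=2$ pair, yet the original outer tangle is knotted. Removing the closed curves is also unjustified: an innermost curve may be essential in $\Lambda\setminus\lambda$, i.e.\ bound a splitting disc, and irreducibility of the balls does not remove it.

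\textbf{Why the reverse step fails.} A finger move is determined not by the arc $\beta\subset\Lambda$ but by the disc guiding a subarc of $\lambda$ to $\beta$. That disc can be knotted relative to the rest of $\lambda$. Even when both tangles are trivial, they are not determined by the meander pattern of endpoint pairings. Uniqueness of bridge positions of the unknot is Otal's theorem, not an Alexander-trick argument.

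\textbf{The paper's own proof.} It breaks at the corresponding point. It asserts that triviality of $\lambda$ makes $R_\pm$ handlebodies, via the claimed trivial link $\{\mu_i\}$. In the example above, one of the $R_\pm$ is the exterior of the knotted theta-curve $\lambda\cup\alpha$, which is not a handlebody. Via Lemma~\ref{rea} and the easy (necessity) half of Theorem~\ref{cond}, the two pairs above give non-equivalent flows in $G_{1,2,5}$ with four heteroclinic trajectories each, contrary to Theorem~\ref{clas+}.
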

\begin{proof} Since the intersection $\lambda\cap \Lambda$ is transversal, $b$ is even. Let $b=0$. Denote by $B_{\lambda} (B_{\lambda'})$ the connected component of $S^3\setminus \Lambda (S^3\setminus \Lambda')$ that contains the knot $\lambda (\lambda')$. Since the knots $\lambda, \lambda'$ are trivial, there exists a homeomorphism $h: S^3\to S^3$ such that $h(\lambda)=\lambda'$. From~[\onlinecite[Theorems 3.1, 3.2 of Chapter 8]{Hi}] it follows that for any two balls with locally flat boundaries in $S^3$ there exists an ambient isotopy with compact support that maps one ball to the other. Therefore, we can assume that $h(S^3\setminus B_\lambda)=S^3\setminus B_{\lambda'}$ (a detailed proof of this fact is presented, for example, in~[\onlinecite[Proposition~4]{GuSa26}]). Thus, for the case $b=0$, the lemma is proved. We prove the lemma for the case $b=2k>0$.

It follows from  Generalized Schoenflies Theorem that the closures $B_+, B_-$ of the connected components of the set $\Sigma\setminus \Lambda$ are three-dimensional balls.
We orient the knot $\lambda$ arbitrarily and number the points $p_1,\dots, p_b$ belonging to the intersection $\lambda\cap \Lambda$ in the order they appear when traversing the knot $\lambda$ in the chosen direction. We denote by $N_\lambda$ the solid torus that is a tubular neighborhood of the knot $\lambda$ in $S^3$. Since the intersection $N_\lambda\cap \Lambda$ is transversal, without loss of generality we can assume that $N_\lambda\cap \Lambda$ consists of a finite set of pairwise disjoint two-dimensional balls $D_1,\dots, D_b$ such that $p_i\subset {\rm int}D_i$ for each $i\in \{1,\dots, b\}$. Set $H_+=B_+\cup N_\lambda, H_-=B_-\cup N_\lambda$. Each of the compact manifolds $H_{\pm}$ is a ball with $k$ handles of index 1, i.e., a handlebody of genus $k$.

Let us show that the triviality of the knot $\lambda$ implies that the closure  $R_+(R_-)$ of the complement to $H_{+} (H_-)$ is also a handlebody of genus $k$. Let $\widetilde \lambda\subset \partial N_\lambda$ be a knot whose linking number   with  $\lambda$ is zero and which  intersect $\Lambda$ transversely at $b$ points $\tilde p_1,\dots, \tilde p_b$. We suppose that  the indices of points $\{\tilde p_i\}$ are chosen in the same way as the indices of the points $\{p_i\}$. For each $i\in \{1,3,\dots, b-1\}$ we  connect the points $\tilde p_i, \tilde p_{i+1}$ by an arc $\gamma_i\subset \Lambda$ such that $\gamma_i\cap \gamma_j=\emptyset$ for $i\neq j$. In the same way, we connect  the points $\tilde p_i, \tilde p_{i+1}$ by an arc $\tilde l_i\subset \tilde \lambda\cap B_-$ and set  $\mu_i=\tilde l_i\cup \gamma_i$.  Then  the union  $\{\mu_i\}$ is a trivial link (a set of trivial knots, each of which lies in a ball that has no common points with other knots in the link).  Since the link is trivial, there exist locally flatly embedded in $B_-$ pairwise disjoint two-dimensional balls $P_1, P_3,\dots, P_{b-1}$ such that $\partial P_i=\mu_i$, $i\in \{1,3,\dots, b-1\}$. Let $Q_1,Q_3,\dots, Q_{b-1}\subset B_-$ be pairwise disjoint cylindrical neighborhoods of these balls, that is, the images of  embeddings $e_i$ of the 2-handle $H^3_2=\mathbb B^2\times \mathbb B^1$ into $B_-$ such that $e_i(\mathbb B^2\times \{0\})=P_i$  (see Fig.~\ref{hb1}). Then the sets $X=H_+\cup (\bigcup \sum \limits_{j=1}^kQ_{2j-1})$, $Y=S^3\setminus {\rm int}\, X$ are locally flat three-dimensional balls in $S^3$, and $R_+=Y\cup (\bigcup \sum \limits_{j=1}^kQ_{2j-1})$ is a ball with $k$ glued 1-handles. It can be shown similarly that $R_-$ is also a handlebody of genus $k$.

\begin{figure}
\centering{\includegraphics[width=0.25\textwidth]{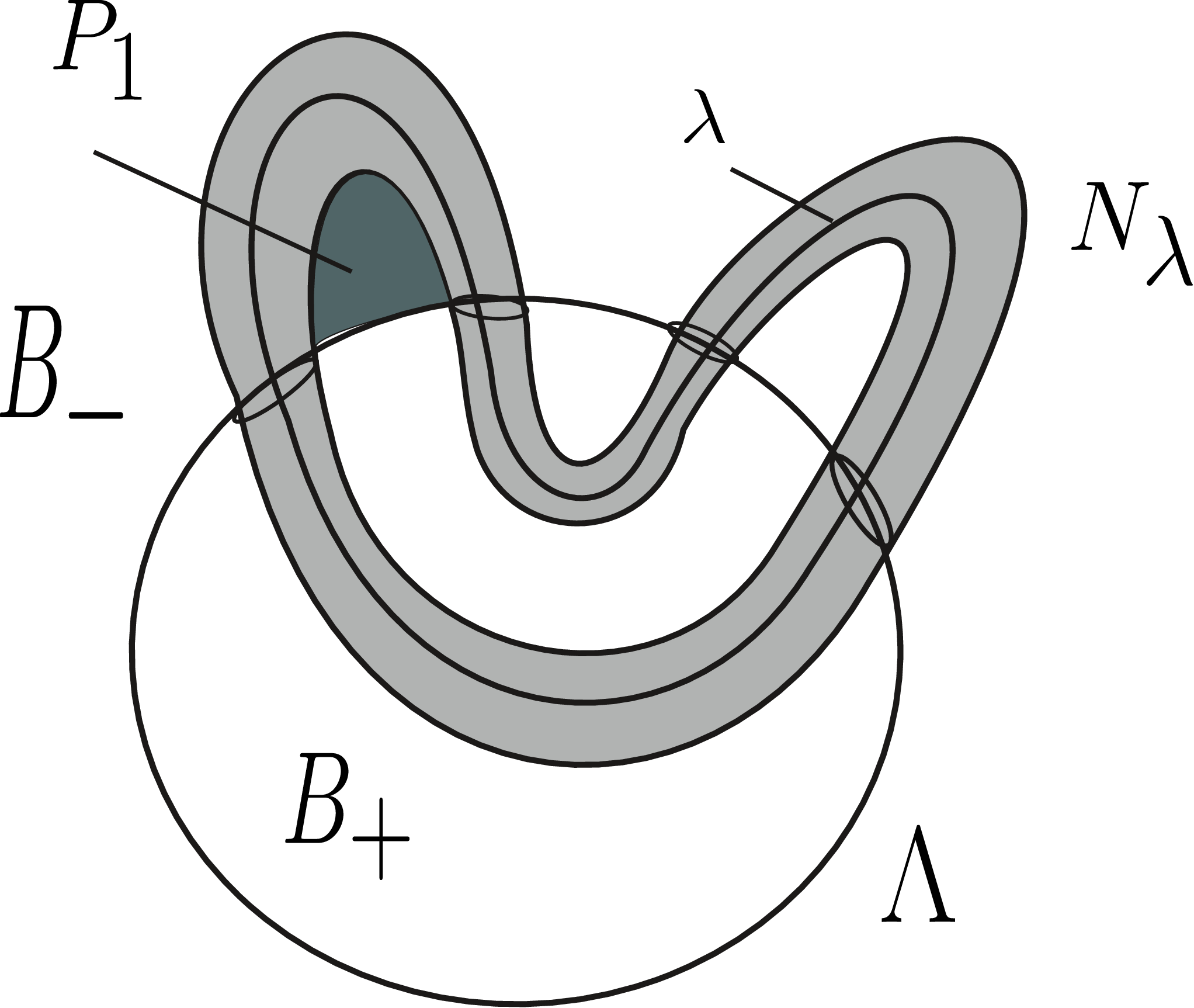}}
\caption{Sphere $\Lambda$, trivial knot $\lambda$, its neighborhood $N_\lambda$, and disk $P_1$}
\label{hb1}
\end{figure}

Objects similar to those introduced above, but corresponding to the pair $\lambda', \Lambda'$ will be denoted by the same symbols with an added prime. To prove the lemma, it suffices to construct a homeomorphism $\Phi_+:\partial H_+\to H'_+$ such that $\Phi_+(\Lambda)=\Lambda', \Phi_+(\lambda)=\lambda'$, and extendable to $H_+\cup R_+$. Recall that the meridians of a handlebody $H$ of genus $k$ are defined as a set of pairwise disjoint curves $c_1,\dots, c_k\subset \partial H$, each of which bounds an open disk in ${\rm int}\, H$, and $\partial H\setminus \bigcup\limits_{i=1}^k c_i$ is homeomorphic to the sphere $S^2$ with $k$ disks cut out. A homeomorphism defined on the boundary of a handlebody extends to a homeomorphism of the hanldebody $H$ if and only if it preserves the  meridians. Note that in the case of handlebodies $H_+$ and $R_+$, the meridians of the first body are the boundaries of the disks $D_1, D_3, \dots D_{b-1}$, and the meridians of the second are the boundaries of the disks $P_1, P_{3},\dots, P_{b-1}$.

Let $\psi:N_\lambda\to  N_{\lambda'}$ be a diffeomorphism such that $\psi(\lambda)=\lambda', \psi(\tilde \lambda)=\tilde \lambda'$, $\psi(N_{\lambda}\cap B_+)\subset B'_+$, and $\psi(D_i)=D'_i$ for each $i\in \{1,\dots, b\}$. From~[\onlinecite[Theorem 3.2 Chapter 8]{Hi}] it follows that there exists a homeomorphism $\Psi: \Lambda\to \Lambda'$ such that $\Psi|_{D_i}=\psi|_{D_i}$ for any $i\in \{1,\dots, b\}$.

The homeomorphisms $\Psi, \psi|_{\partial N_\lambda}$ determine homeomorphisms $\Phi_+: \partial H_{+}\to \partial H'_+, \Phi_-: \partial H_{-}\to \partial H'_-$ as follows:
\begin{equation}
\Phi_{\pm}(x)=\begin{cases} \Psi(x), x\in \Lambda;\cr
\psi(x), x\in \partial N_\lambda.
\end{cases}
\end{equation}

From the definition of the homeomorphisms $\psi, \Phi_{+}$ it follows that the set of curves $\{\Phi_{+}(\partial Q_i)\}$ forms a trivial link. Then there exists a set of pairwise disjoint locally flatly embedded two-dimensional balls $\widetilde P_1,\widetilde P_3, \dots, \widetilde P_{b-1}$ such that $\partial \widetilde P_i=\{\Phi_+(\partial Q_i)\}$ for each $i\in\{1,3,\dots, b-1\}$. Hence,  these curves can be viewed as the meridians of the handlebody $R'_+$, and the homeomorphism $\Phi_+$ extends to a homeomorphism $\Theta+:R_+\to R_+$. Similarly, it can be proved that the homeomorphism $\Phi_-$ extends to a homeomorphism $\Theta_-:R_-\to R_-$. Then the desired homeomorphism $h:S^3\to S^3$ is defined by the relations

\begin{equation}
h(x)=\begin{cases} \psi(x), x\in N_\lambda;\cr
\Theta_+(x), x\in R_+=B_-\setminus {\rm int}\,N_\lambda;\cr
\Theta_-(x), x\in R_-=B_+\setminus {\rm int}\, N_\lambda.
\end{cases}
\end{equation}
\end{proof}

\begin{lemma}\label{dehn}
Let $\mathbb B^2_r=\{x\in \mathbb R^2|\ |x|=r\}$, $0<r_1<r<1$; $x\in \partial \mathbb B^2_r$ --- some point and $g: \mathbb B^2_r\times [0,1]\to \mathbb B^2_r\times [0,1]$ --- a homeomorphism such that $g(\{O\}\times [0,1])=\{O\}\times [0,1]$. Then there exists a homeomorphism $h:\mathbb B^2\times [0,1]\to \mathbb B^2\times [0,1]$ with the following properties:
\begin{enumerate}
\item $h|_{\mathbb B^2_{r_1}\times [0,1]}={\rm id}$;
\item $h(\mathbb B^2_r\times [0,1])=\mathbb B^2_r\times [0,1]$;
\item $h(g(x\times [0,1]))=x\times [0,1]$;
\item $h|_{\partial \mathbb B^2\times [0,1]}={\rm id}$.
\end{enumerate}

\end{lemma}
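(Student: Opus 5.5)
The statement as printed is slightly garbled. I will read $\mathbb B^2_r$ as the closed disk $\{|x|\le r\}$, and $x\times[0,1]$ as a vertical segment on the lateral boundary $\partial\mathbb B^2_r\times[0,1]$. Since $g$ is a homeomorphism of the solid cylinder, I will also assume (as in the intended application to tubular neighbourhoods near a heteroclinic curve) that $g$ preserves the top and bottom disks $\mathbb B^2_r\times\{0\}$, $\mathbb B^2_r\times\{1\}$. This assumption does real work: it puts the arc $\alpha=g(x\times[0,1])$ in the annulus $\mathcal A=\partial\mathbb B^2_r\times[0,1]$, joining its two boundary circles. After a preliminary rotation supported near the ends, I may assume $\alpha$ has the same endpoints $x\times\{0\}$, $x\times\{1\}$ as the standard arc.

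The plan is to build $h$ as a composition $h=h_2\circ h_1$ of two homeomorphisms supported in the shell $(\mathbb B^2\setminus{\rm int}\,\mathbb B^2_{r_1})\times[0,1]$. Both will preserve every cylinder $\partial\mathbb B^2_\rho\times[0,1]$, so properties 1, 2 and 4 of the statement are automatic; only property 3 needs work.

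\textbf{Step 1 (reduce the arc to a standard spiral).} Arcs in $\mathcal A\cong S^1\times[0,1]$ joining the two boundary components with fixed endpoints are classified, up to isotopy of $\mathcal A$ rel $\partial\mathcal A$, by one integer: the winding number $n$ of the lift of $\alpha$ to the universal cover $\mathbb R\times[0,1]$. This is the standard classification of arcs in an annulus (Alexander trick on the disk obtained by cutting $\mathcal A$ along the arc). So there is an isotopy $\alpha_s$ of $\mathcal A$ rel $\partial\mathcal A$ carrying $\alpha$ to the spiral
$$\beta_n=\{(r e^{i(\theta_x+2\pi n t)},t):\ t\in[0,1]\},$$
where $x=re^{i\theta_x}$. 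I then spread this isotopy radially. Choose $r_1<r'<r<r''<1$ and a bump function $\tau$ equal to $1$ at $\rho=r$ and $0$ outside $(r',r'')$. Let $h_1$ act on the cylinder of radius $\rho$ by the time-$\tau(\rho)$ map of the isotopy, transported radially. Then $h_1$ preserves each cylinder, is the identity outside $r'<\rho<r''$, and $h_1(\alpha)=\beta_n$.

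\textbf{Step 2 (untwist the spiral).} Define
$$h_2(\rho e^{i\theta},t)=(\rho e^{i(\theta-2\pi n\,\tau(\rho)\,t)},t).$$
This map is identity for $\rho\le r_1$ and $\rho=1$, and on the top and bottom disks, since the rotation by $2\pi n$ is trivial at $t=1$ and vanishes at $t=0$. It preserves each cylinder and sends $\beta_n$ to $x\times[0,1]$. Hence $h=h_2\circ h_1$ satisfies properties 1 through 4.

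\textbf{Main obstacle.} The delicate point is Step 1. First, one has to justify that $\alpha$ lies in the lateral annulus and has the right endpoints; this is exactly where the boundary-preservation hypothesis on $g$ enters, which the statement leaves implicit. Second, one must realize the isotopy of arcs in the annulus by ambient homeomorphisms that are continuous in the radial parameter. I would do this via the Alexander trick, which provides a canonical isotopy depending continuously on the parameter. The remaining verifications are routine.
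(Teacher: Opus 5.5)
Your argument is correct and proves the lemma by a more explicit route than the paper.

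\textbf{The paper's route.}
\begin{enumerate}
\item It first produces some homeomorphism $\psi$ of the lateral annulus $\partial\mathbb B^2_r\times[0,1]$ with $\psi(l)=l'$, using the Alexander trick on the disk left after removing a strip around the arc.
\item It extends $\psi$ to homeomorphisms $\Psi_\pm$ of the boundary tori of the two solid tori $P_+=(\mathbb B^2\setminus{\rm int}\,\mathbb B^2_r)\times[0,1]$ and $P_-=(\mathbb B^2_r\setminus{\rm int}\,\mathbb B^2_{r_1})\times[0,1]$. This is done by interpolating isotopies of the boundary circles across the horizontal annular faces.
\item It composes with a Dehn twist supported in the bottom face, so that the meridian goes to a meridian.
\item It invokes the criterion that a torus homeomorphism preserving the meridian extends over the solid torus.
\end{enumerate}

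\textbf{Your route.} You split the annulus map into a part isotopic to the identity rel $\partial\mathcal A$, which you spread radially with the bump $\tau$, and a pure twist, which you extend by the explicit shear $h_2$. On each of $P_\pm$, the restriction of $h_2$ to the face $t=1$ is a power of a Dehn twist of that annular face. So the underlying mechanism is the same as the paper's correction. The differences:
\begin{itemize}
\item You never need the solid-torus extension criterion.
\item Every concentric cylinder is preserved, so properties 1, 2 and 4 are immediate.
\item Arbitrary twist powers are handled uniformly, and $\psi$ need not be interpolated to the identity on the boundary circles.
\item The price is that you rely on the classification of arcs in an annulus up to isotopy rel boundary by winding number, i.e.\ $\pi_0$ of the homeomorphism group of $\mathcal A$ rel $\partial\mathcal A$ is $\mathbb Z$, in the topological category. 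The paper needs only the existence of some homeomorphism carrying $l$ to $l'$.
\end{itemize}

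\textbf{The extra hypothesis is necessary.} Your assumption that $g$ preserves the horizontal faces is not merely convenient. Properties 2 and 4 force $h$ to preserve the boundary sphere and $\mathbb B^2_r\times[0,1]$, hence their common frontier $\partial\mathbb B^2_r\times[0,1]$. Property 3 can therefore hold only if $g(l)$ lies in that annulus. The paper uses this silently when it takes a neighbourhood of $l'$ inside $\partial\mathbb B^2_r\times[0,1]$.

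\textbf{One correction.} $h_2$ is not the identity on the top disk. At $t=1$ it rotates the circle of radius $\rho$ by $-2\pi n\tau(\rho)$, which is a genuine twist on $r'<\rho<r''$. In fact, when $n\neq 0$, no homeomorphism with properties 1--4 can fix both horizontal faces pointwise. If it did, its restriction to $P_+$ would carry the curve formed by $\alpha$, the two radial segments and an outer vertical segment (which is not a meridian) onto a meridian. The lemma does not ask for this property, so nothing breaks; just drop that sentence.
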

\begin{proof} Set $l=x\times [0,1], l'=g(l)$. Let $e:[-1,1]\times [0,1]\to \partial \mathbb B^2_r\times [0,1]$ be a topological embedding such that $e(\{0\}\times [0,1])=l$. Set $N_l=e([-1,1]\times [0,1])$ and denote by $e', N_{l'}$ a similar embedding and its image, which is a compact neighborhood of the arc $l'$ in $\partial \mathbb B^2_r\times [0,1]$.
The set $$C_l={\rm cl} (\partial \mathbb B_r\times [0,1]\setminus N_l)$$ is homeomorphic to the disk $\mathbb B^2$. By Alexander Theorem, the restriction of the homeomorphism $e'e^{-1}: N_l\to N_{l'}$ to $\partial N_l$ extends to a homeomorphism $C_l\to C_{l'}$. Therefore, there exists an orientation-preserving homeomorphism
$$\psi: \partial \mathbb B_r\times [0,1]\to \partial \mathbb B^2_r\times [0,1]$$
such that $\psi(l)=l'$. 

Set $$P_+=(\mathbb B^2\setminus {\rm int}\, \mathbb B^2_{r}) \times [0,1],\  P_-=(\mathbb B^2_{r}\setminus {\rm int}\, \mathbb B^2_{r_1}) \times [0,1].$$

 $P_+, P_-$ are homeomorphic to the solid torus   $\mathbb B^2\times \mathbb S^1$. We extend $\psi$ to a homeomorphism $\Psi_\pm:\partial P_\pm\to \partial P_\pm$ that will be identity on $\partial B_r\times [0,1], \partial B_{r_1}\times [0,1]$ and could be extended to a homeomorphism $\Phi_\pm: P_\pm\to P_\pm$. 

Let $\psi_{i}=\psi|_{\partial \mathbb B^2_r\times \{i\}}, i\in \{0,1\}$, and denote by $\psi^t_i: \partial \mathbb B^2_r\times \{i\}\to \partial \mathbb B^2_r\times \{i\}$ an isotopy connecting $\psi^1_i=\psi_i$ with the identity map $\psi^0_i$.  Set $$\tau_+(|x|)=\frac{1-|x|}{1-r},$$ and  define the   homeomorphisms $\Psi_+: \partial P_+\to \partial P_+$ by the relations

$$
\Psi_+(x)=\begin{cases}\psi(x),\ x\in \partial \mathbb B^2_r\times [0,1];\cr
x,\ x\in \partial \mathbb B^2\times [0,1];\cr
\frac{|x|}{r} \psi^{\tau_+(|x|)}_i(\frac{rx}{|x|}),\ x\in \mathbb B^2\setminus \mathbb B^2_r\times \{i\}, i\in \{0,1\}.
\end{cases}
$$

Let  $\rho_{x}$  be a radial  segment of   $\mathbb B^2$ joining the point $x\in \partial \mathbb B^2_r$ with a point  $x'\in \partial \mathbb B^2$. Then the union 

$$\mu=l\cup \{\rho_x\times \{0,1\}\}\cup \{x'\times [0,1]\}$$

is the meridian of solid torus $P_+$. A curve $\Psi_+(\mu)\subset \partial P_+$ is a closed curve that intersects a canonical longitude $\partial \mathbb B^2\times \{0\}$ of the torus $\partial P_+$ at exactly one point. Then there  are two possibilities. 

1)   $\Psi_+(\mu)$ is a meridian of $P_+$, in this case there exists a homeomorphism $\Phi_+: P_+\to P'_+$ such that $\Phi_+|_{\partial P_+}=\Psi_{+}|_{\partial P_+}$.

2) There is a Dehn twist $\tau: \partial P_+\to \partial P_+$ with a support $(\mathbb B^2\setminus {\rm int}\, \mathbb B^2_r)\times \{0\}$ such that
$\tau(\Psi_+(\mu))$ is  a meridian of $P_+$.  In this case the composition $\tau \Psi_+$ sends $l$ to $l'$ and  extends to the desired homeomorphism $\Phi_+: P_+\to P'_+$.

 Similarly, one may construct a homeomorphism $\Phi_-: P_-\to P_-$ such that $\Phi_-|_{\partial P_+\cap \partial P_-}=\Phi_+|_{\partial P_+\cap \partial P_-}=\psi_{\partial P_+\cap \partial P_-}$. Then the  desired homeomorphism $h$ is defined by the formula

\begin{equation}
h(x)=\begin{cases}\Phi_+(x), x\in (\mathbb B^2\setminus {\rm int}\, \mathbb B^2_{r}) \times [0,1];\cr
\Phi_-(x), x\in (\mathbb B^2_r\setminus {\rm int}\, \mathbb B^2_{r_1}) \times [0,1];\cr
x,\ x\in \mathbb B_{r_1}\times [0,1].\end{cases}
\end{equation}

\end{proof}

\subsection{Refining the properties of the homeomorphism $h: \Sigma\to \Sigma'$ satisfying the definition of equivalence of schemes}

From the transversality of the intersection $\Lambda_s\cap \lambda_u$, it follows that there exist tubular neighborhoods $N_{_{\Lambda_s}}, N_{\lambda_u}\subset \Sigma$ of the sphere $\Lambda_s$ and the knot $\lambda_u$, respectively, such that the intersection $N_{_{\Lambda_s}}\cap N_{\lambda_u}$ consists of a finite set of connected components each of which is diffeomorphic to $B^2\times [0,1]$, and the fibers of the two-dimensional and one-dimensional foliations on this component, defining the direct product structure, lie on the fibers of foliations transverse to $\lambda_u, \Lambda_s$ that defined by the structure of tubular neighborhoods $N_{_{\Lambda_s}}, N_{\lambda_u}$ (see, for example,~[\onlinecite[Ch.4, Section 6, Exercise~2]{Hi}]). Since the secant $\Sigma$ is an orientable manifold, the neighborhood $N_{\lambda_u}$ of the knot ${\lambda_u}$ is a solid torus. Let $\tilde \lambda_u\subset \partial N_{\lambda_u}$ denote the longitude  of the solid torus $N_{\lambda_u}$ whose intersection with each connected component of the set $N_{_{\Lambda_s}}\cap N_{\lambda_u}$ consists of a single one-dimensional arc, which is a fiber of the one-dimensional foliation.

Let $N_{_{\Lambda'_s}}, N_{\lambda'_u}$, $\widetilde \lambda'_u$ be similar manifolds for a flow ${f'}^t$ whose scheme is equivalent to the scheme of the flow $f^t$.

\begin{propos}\label{Step0} Let the schemes of the flows $f^t, {f'}^t\in G_{1,2,k}$ be equivalent. Then there is a homeomorphism $\eta: \Sigma\to \Sigma'$ such that:
\begin{enumerate}
\item $\eta(\Lambda_s)=\Lambda'_s$, $\eta(\lambda_u)=\lambda'_u$,
\item $\eta(N_{_{\Lambda_s}})=N_{_{\Lambda'_s}}$, $\eta(N_{\lambda_u})=N_{\lambda'_u}$;
\item if $k=5$, then $\eta(\widetilde \lambda_u)=\widetilde \lambda'_u$.
\end{enumerate}
\end{propos}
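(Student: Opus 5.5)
We start from a homeomorphism $h:\Sigma\to\Sigma'$ with $h(\Lambda_s)=\Lambda'_s$ and $h(\lambda_u)=\lambda'_u$, which exists because the schemes are equivalent. We correct it in three stages: first make it nice near the intersection points, then fix the tubular neighborhoods, and finally, when $k=5$, fix the longitude. Since $h$ is only a homeomorphism, we first replace it by a map that is smooth near $\Lambda_s\cup\lambda_u$. We use the fact that in dimension three, homeomorphisms can be approximated by diffeomorphisms (Moise), keeping the invariant pair, or we argue directly with local flatness. Then $h(\lambda_u\cap\Lambda_s)=\lambda'_u\cap\Lambda'_s$, and near each point of $\lambda_u\cap\Lambda_s$ the pair looks locally like a coordinate line crossing a coordinate plane.

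\emph{Matching the neighborhoods.} We construct a diffeomorphism $\xi:N_{\Lambda_s}\cup N_{\lambda_u}\to N_{\Lambda'_s}\cup N_{\lambda'_u}$ directly from the product structures on the components of $N_{\Lambda_s}\cap N_{\lambda_u}$, just as in the proof of Corollary~\ref{inters1}. We require that $\xi$ agrees with $h$ on $\Lambda_s\cup\lambda_u$ up to isotopy, and that $\xi$ respects the combinatorics, meaning the cyclic order of the points along $\lambda_u$ and the side of $\Lambda_s$ toward which each of them points. These data are preserved by $h$. By uniqueness of tubular neighborhoods, the two maps $h|_{N_{\Lambda_s}\cup N_{\lambda_u}}$ and $\xi$ then differ by an isotopy supported near $\Lambda_s\cup\lambda_u$, possibly composed with twisting along the normal circle bundle of $\lambda_u$. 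The isotopy extension theorem now gives a homeomorphism $\eta_1:\Sigma\to\Sigma'$ satisfying items 1 and 2. Concretely, we take $\eta_1=\xi$ on $N_{\Lambda_s}\cup N_{\lambda_u}$ and $\eta_1=h$ isotoped on the complement.

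\emph{The longitude for $k=5$.} Here $\lambda_u$ is trivial in $S^3$, and $\eta_1(\widetilde\lambda_u)$ is a longitude of $N_{\lambda'_u}$ that may differ from $\widetilde\lambda'_u$ by some number of meridian twists. Note that $\widetilde\lambda_u$ is characterized by having exactly one arc in each component of $N_{\Lambda_s}\cap N_{\lambda_u}$. We therefore compose $\eta_1$ with a self-homeomorphism of $\Sigma'$ supported in a collar of $\partial N_{\lambda'_u}$ inside $N_{\lambda'_u}$, away from $\lambda'_u$. This map is a Dehn twist along a meridian disk, and inside $N_{\lambda'_u}$ it untwists between $\partial N_{\lambda'_u}$ and a smaller concentric solid torus. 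The composition then preserves $\lambda'_u$ and $N_{\lambda'_u}$ and carries $\eta_1(\widetilde\lambda_u)$ to $\widetilde\lambda'_u$. To keep it compatible with $N_{\Lambda'_s}$, we place the twist region in a segment $\mathbb B^2\times[0,1]$ of $N_{\lambda'_u}$ lying between two consecutive disks of $N_{\Lambda'_s}\cap N_{\lambda'_u}$. This is exactly the situation of Lemma~\ref{dehn}: the map $g$ is the restriction of $\eta_1$, and we obtain $h$ that is the identity on the inner core and on the outer boundary and straightens the arc $x\times[0,1]$. Gluing the resulting map with the identity elsewhere gives $\eta$.

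The main obstacle is the second stage: matching the neighborhoods rigidly. Specifically, we must check that a homeomorphism which merely preserves $\Lambda_s$ and $\lambda_u$ can be isotoped, rel these sets, to one that maps the chosen tubular neighborhoods onto each other fiberwise. In the topological category this requires local flatness and a topological uniqueness of normal bundles. I would try first to prove it by the explicit cut-and-paste route of Corollary~\ref{inters1} and Lemma~\ref{cp2}. On the complement of $\Lambda_s\cup N_{\lambda_u}$, extend $\xi|_\partial$ using the fact that $h$ already gives a homeomorphism of the complements. Then correct on the boundary torus and spheres by an isotopy, using that mapping classes of punctured spheres relative to the boundary are trivial up to the twists handled above.
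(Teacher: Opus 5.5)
Your proposal has two genuine gaps. The longitude correction in stage~3 uses a mechanism that cannot work, and the neighbourhood matching in stage~2, which carries items 1--2, is asserted rather than proved.

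\emph{Stage 3 fails as written.} A self-homeomorphism of $\Sigma'$ supported in a collar of $\partial N_{\lambda'_u}$ inside $N_{\lambda'_u}$ is the identity on $\partial N_{\lambda'_u}$, because that torus lies in the frontier of the support. So it cannot move $\eta_1(\widetilde\lambda_u)\subset\partial N_{\lambda'_u}$.

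Lemma~\ref{dehn} does not repair this, whichever way you match the cylinders.
\begin{itemize}
\item If the segment of $N_{\lambda'_u}$ plays the role of $\mathbb B^2\times[0,1]$, the straightened arc lies on $\partial\mathbb B^2_r\times[0,1]$, strictly inside $N_{\lambda'_u}$, and $\partial N_{\lambda'_u}$ stays fixed.
\item If the segment plays the role of $\mathbb B^2_r\times[0,1]$, the lemma's map is not the identity on the end disks $\mathbb B^2\times\{0,1\}$. In the second case of its proof the twist $\tau$ sits exactly on the end annulus $(\mathbb B^2\setminus{\rm int}\,\mathbb B^2_r)\times\{0\}$. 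So ``gluing with the identity elsewhere'' is discontinuous.
\end{itemize}
This is unavoidable. Suppose a homeomorphism is the identity on all of $\partial(\mathbb B^2\times[0,1])$, preserves $\mathbb B^2_r\times[0,1]$, and straightens an arc winding $n$ times. It would then relate a meridian of the solid torus $P_+=(\mathbb B^2\setminus{\rm int}\,\mathbb B^2_r)\times[0,1]$ to a curve homologous to that meridian plus $n$ times the core of $\partial\mathbb B^2_r\times[0,1]$. For $n\neq 0$ that curve bounds no disk in $P_+$.

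There is also a global obstruction. For $k=5$, every homeomorphism of $S^3$ leaving $\lambda'_u$ invariant preserves, up to sign, the linking number of a longitude with $\lambda'_u$. Hence a net meridian-twist discrepancy between $\eta_1(\widetilde\lambda_u)$ and $\widetilde\lambda'_u$ can never be removed. Item~3 therefore needs the longitudes to be normalized (linking number zero, as in Lemma~\ref{cp2}), and then the net discrepancy vanishes automatically.

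What remains is how the winding is distributed among the segments between consecutive disks. As you note yourself, the ``one arc per component'' condition does not determine this. It has to be corrected at the intersection points. For example, rotate a small ball around $p\in\lambda'_u\cap\Lambda'_s$ about the axis $\lambda'_u$, through an angle that is $2\pi$ near the component of $N_{\Lambda'_s}\cap N_{\lambda'_u}$ at $p$ and $0$ near the boundary of the ball. This preserves $\Lambda'_s,\lambda'_u,N_{\Lambda'_s},N_{\lambda'_u}$ and moves one full turn from one adjacent segment to the other. Alternatively, as the paper arranges it, apply Lemma~\ref{dehn} inside $C'=\Sigma'\setminus{\rm int}\,N_{\Lambda'_s}$, where the end disks lie on $\partial C'$, and only afterwards extend across $N_{\Lambda'_s}$.

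\emph{Stage 2, which you flag as the main obstacle, is left unproved, and the justification given does not suffice.}
\begin{itemize}
\item Moise's approximation theorem, as you invoke it, does not keep $\Lambda_s\mapsto\Lambda'_s$ and $\lambda_u\mapsto\lambda'_u$. A relative version is needed and is not supplied.
\item $\Lambda_s\cup\lambda_u$ is not a submanifold, so tubular-neighbourhood uniqueness does not apply to it directly.
\item Uniqueness of neighbourhoods only says that $h(N_{\Lambda_s}\cup N_{\lambda_u})$ can be moved onto $N_{\Lambda'_s}\cup N_{\lambda'_u}$. It does not say that $h$ then agrees on the frontier with your pre-chosen fibrewise map $\xi$. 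The normal twisting you mention is exactly what can prevent setting $\eta_1=\xi$ on the neighbourhoods and $\eta_1=h$ on the complement, and isotopy extension does not remove it.
\end{itemize}
For items 1--2 you do not need $\xi$ at all. It suffices to compose $h$ with a homeomorphism of $\Sigma'$ that preserves $\Lambda'_s$ and $\lambda'_u$ and carries $h(N_{\Lambda_s})$, $h(N_{\lambda_u})$ onto $N_{\Lambda'_s}$, $N_{\lambda'_u}$.

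The paper gets this by cutting first:
\begin{itemize}
\item For $k=4$ it lifts $h$ to the fundamental domain $A$ of the covering $\mathbb R^3\setminus\{O\}\to\Sigma$. Conjugating by the collar-compressing maps $A\to C$ and $A'\to C'$ gives a homeomorphism $C\to C'$ carrying $\lambda_u\cap C$ onto $\lambda'_u\cap C'$.
\item It then uses Bing--Moise to make the arcs and their neighbourhoods subpolyhedra. Uniqueness of regular neighbourhoods (Rourke--Sanderson) then matches $N_{\lambda_u}\cap C$ with $N_{\lambda'_u}\cap C'$.
\item Only after that does it treat the longitude arcs with Lemma~\ref{dehn} and extend across $N_{\Lambda_s}$ by a product isotopy.
\end{itemize}
Your outline needs an argument of this kind in place of the appeal to tubular-neighbourhood uniqueness.
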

\begin{proof} We prove the proposition for $k=4$ (the arguments for $k=5$ are similar). In this case, by Proposition~\ref{hd} and Lemma~\ref{seq1}, the manifold $\Sigma$ is diffeomorphic to $S^2\times S^1$, and the sphere $\Lambda_s$ does not divide $\Sigma$.

By the condition, the schemes of the flows  $f^t, {f'}^t\in G_{1,2,4}$ are equivalent,  that  means the  existing of  a homeomorphism $h: \Sigma\to \Sigma'$ such that $h(\Lambda_s)=\Lambda_s'$, $h(\lambda_u)=\lambda_u'$ (see definition~\ref{sh-eq}). We show that the existence of $h$ implies the existence of the homeomorphism $\eta$.

Set $C=\Sigma\setminus {\rm int}\, N_{\Lambda_s}, C'=\Sigma'\setminus {\rm int}\, N'_{\Lambda_s}$. The proof consists of two steps. In the first step, we show that there exists an orientation  preserving homeomorphism $\eta_{_{C,C'}}: C\to C'$ such that $\eta_{_{C,C'}}(\lambda_u\cap C)=\lambda'_u\cap C$,
$\eta_{_{C,C'}}(N_{\lambda_u}\cap C)=N_{\lambda'_u}\cap C'$, and
$\eta_{_{C,C'}}(\widetilde \lambda_u\cap C)=\widetilde \lambda'_u\cap C$. In the second step, we show that the homeomorphism $\eta_{_{C,C'}}$ can be extended to the set $N_{\Lambda_s}$  to  the desired homeomorphism $\eta$.

{\it Step 1. Constructing the homeomorphism $\eta_{_{C,C'}}: C\to C'$}

Consider the covering $p:\mathbb R^3\setminus \{O\}\to \Sigma$.
Since the sphere $\Lambda_s$ does not divide $\Sigma$, its complete preimage $p^{-1}(\Lambda_s)$ is a countable set of pairwise disjoint smooth  two-dimensional spheres  each of which bounds a ball in $\mathbb R^3$ whose interior contains the point $O$. Let $A\subset \mathbb R^3$ be the compact subset bounded by the pair of spheres $S_0, S_{1}\subset p^{-1}(\Lambda_s)$ such that ${\rm int}\, A\cap p^{-1}(\Lambda_s)=\emptyset$. Set $\widehat N_{\Lambda_s}=p^{-1}(N_{\Lambda_s})\cap A$. Note that one of the connected components of $p^{-1}(C)$ lies in ${\rm int}\, A$ and the restriction of $p$ to this component is a diffeomorphism onto $C$. We will identify this connected component with $C$ denoting it by the same symbol, and keep the notation $\lambda_u\cap C$, etc. for sets $p^{-1}(\lambda_u)\cap C$, etc. Note that Annulus Theorem implies that the sets $A, C$ are diffeomorphic to the annulus $\mathbb S^2\times [0,1]$, as is each connected component of the set $\widehat N_{\Lambda_s}$. Denote by $N_{c}$ the collar of the boundary $\partial C$ in $C$, that is, the image of a smooth embedding $e: \mathbb S^2\times [0,1]\times \mathbb S^0\to C$ such that $ e(\mathbb S^2\times \{0\}\times \mathbb S^0)=\partial C$. Set $N=\widehat N_{\Lambda_s}\cup N_c$ and denote by $N_{-1}, N_{1}$ the connected components of the set $N$. We will define an arbitrary point $p\in N_{\pm 1}$ by two coordinates $x\in \partial C, y\in [0,1]$  and assume that $(x,0)\subset \partial A, \widehat N_{\Lambda_s}\cap N_{\pm 1}=\{(x,y)|\, y\in [0,1/2]\}$.

We define a diffeomorphism $\psi: A\to C$ by setting
\begin{equation}
\psi(p)=\begin{cases}p, p\in A\setminus {\rm int}\, N;\cr
(x, 0.5y+0.5), p=(x,y)\in N_{\pm 1}
\end{cases}
\end{equation}

Let $p',\psi', A', C'$ denote  mappings and sets similar to those introduced above, but corresponding to the flow ${f'}^t$. The homeomorphism $h: \Sigma\to \Sigma'$ defines a homeomorphism $$\tilde h: A\to A'$$ such that $p'\tilde h|_A=h p|_{A}$. We define the homeomorphism $g_{_{C,C'}}: C\to C'$ setting  $$g_{_{C,C'}}=\psi\tilde h\psi^{-1}.$$ By construction, $g_{_{C,C'}}(\lambda_u\cap C)=\lambda'_u\cap C'$, $g_{_{C,C'}}(N_c)=\widetilde N'_c$.

Due to  results of Bing and Moise,see~[\onlinecite[Theorem 8]{Bi}], a compact locally flat subset of a closed 3-manifold is a subpolyhedron, so the compact arcs $\lambda'_u\cap C'$ and their compact neighborhoods $N_{\lambda'_u}\cap C', g_{_{C,C'}}(N_{\lambda_u}\cap C)$ are subpolyhedra of the annulus  $C'$. From~[\onlinecite[Theorem 4.11, Ch.4]{RuSa}] it follows that there exists a homeomorphism $h_{_{C', C'}}: C'\to C'$ such that $h_{_{C',C'}}(g_{_{C,C'}}(N_{\lambda_u}\cap C))=N_{\lambda'_u}\cap C', h_{_{C',C'}}(\lambda'_u\cap C')=\lambda'_u\cap C'$. It follows from Lemma~\ref{dehn} that there exists a homeomorphism $\tilde h_{_{C', C'}}: C'\to C'$ such that $\tilde h_{_{C',C'}}(g_{_{C,C'}}(N_{\lambda_u}\cap C))=N_{\lambda'_u}\cap C', \tilde h_{_{C',C'}}(\lambda'_u\cap C')=\lambda'_u\cap C'$ and $\tilde h_{_{C', C'}}(g_{_{C,C'}}(\tilde \lambda_u\cap C))=\tilde \lambda'_u\cap C$. Then the superposition $\eta_{_{C,C'}}=\tilde h_{_{C', C'}}g_{_{C', C'}}$ is the desired homeomorphism.

{\it Step 2. Extend the homeomorphism $\eta_{_{C,C'}}$ to the set $N_{\Lambda_s}$}.

Recall that from the definition of neighborhoods $N_{\Lambda_s}, N_{\lambda_u}$ it follows that there exists a homeomorphism $$\xi: \mathbb S^2\times [-1,1]\to N_{\Lambda_s}$$ such that $\xi(\mathbb S^2\times\{0\})=\Lambda_s$, and for each connected component $X$ of the set $N_{\Lambda_s}\cap N_{\lambda_u}$ there is a ball $B^2\subset\mathbb S^2$ such that $X=\xi(B^2\times [-1,1])$, while $\xi(O_i\times [-1,1])\subset \lambda_u$ for some interior point $O$ of the ball $B^2$. Moreover, without loss of generality, we can assume that $\xi(K\times [-1,1])\subset \widetilde \lambda_u$ for some point $K\in \partial B^2$. Denote by $\xi': \mathbb S^2\times [-1,1]\to N_{\Lambda'_s}$ a homeomorphism with similar properties.

We define the homeomorphism $$\theta: \mathbb S^2\times \{-1,1\}\to \mathbb S^2\times \{-1,1\}$$ by $$\theta={\xi'}^{-1}\eta_{_{C,C'}}\xi|_{\mathbb S^2\times \{-1,1\}}.$$ Since each of the mappings $\xi, \xi', \eta_{_{C,C'}}$ is a homeomorphism of the annulus, the homeomorphism $\theta$ either simultaneously preserves or simultaneously reverses the orientation of both spheres $\mathbb S^2\times \{-1\}, \mathbb S^2\times \{1\}$. It follows that there exists an isotopy $\theta_t:\mathbb S^2\to \mathbb S^2$ such that $\theta_0=\theta|_{\mathbb S^2\times \{-1\}}, \theta_1=\mathbb S^2\times \{1\}$. We define the homeomorphism $\Theta: \mathbb S^2\times [-1,1]\to \mathbb S^2\times [-1,1]$ by setting for $x\in \mathbb S^2, y\in [-1,1]$

$$\Theta(x,y)=(\theta_{(y+1)/2}(x),y).$$

Now the desired homeomorphism $\eta: \Sigma\to \Sigma'$ is defined by the formula

\begin{equation}
\eta(p)=\begin{cases}\eta_{_{C,C'}}(p),\ p\in C;\cr
\xi'(\Theta(\xi^{-1}(p))),\ p\in N_{\Lambda_s}.\end{cases}
\end{equation}

\end{proof}

\subsection{Construction of consistent canonical neighborhoods of saddle equilibria}

In this section, we prove a Proposition~\ref{Step1} establishing that the handle decomposition of  the ambient manifold  determinated by the energy  function  of the flow $f^t\in G(1,2,k)$ can be adjusted so that the foliation determinated  by the structure of the direct product  $\mathbb B^i\times \mathbb B^{4-i}$ on each handle  became  $f^t$-invariant as it is defined by item 3 of the following definition.

\begin{figure}
\centering{\includegraphics[width=0.47\textwidth]{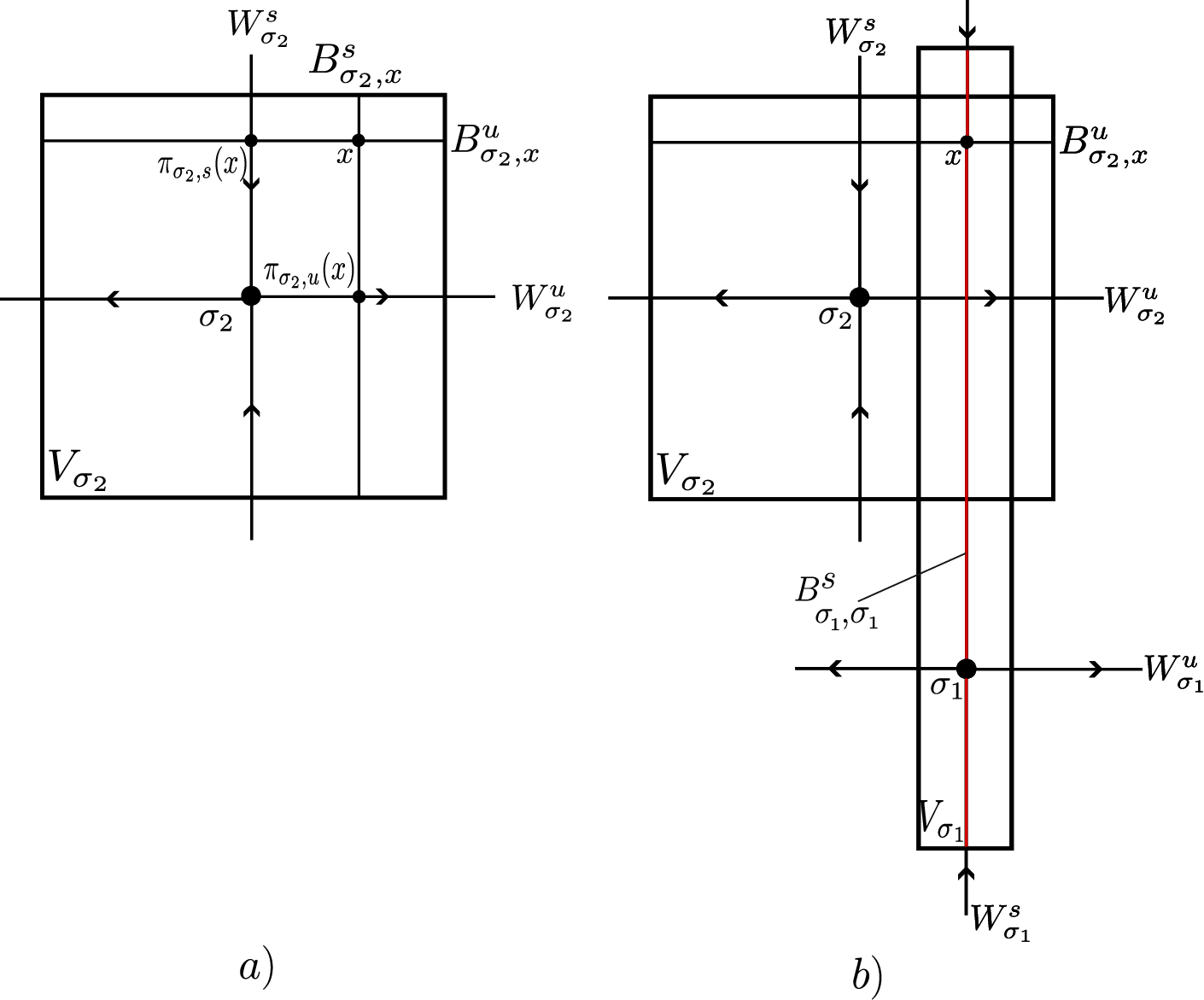}}
\caption{a) canonical neighborhood of a saddle; b) consistent canonical neighborhoods of saddles $\sigma_1,\sigma_2$}
\label{agr}
\end{figure}

\begin{definition}\label{can} A compact neighborhood $V_i$ of a saddle equilibrium state $\sigma_i$ is called a {\it canonical compact neighborhood} if
there exist continuous maps $\pi_{\sigma_i,u}: V_{i}\to W^u_{\sigma_i}, \pi_{\sigma_i,s}: V_{i}\to W^s_{\sigma_i}$ with the following properties (see Fig.~\ref{agr}, a)).

\begin{enumerate}
\item for any points $x\in V_{i}\cap W^u_{\sigma_i}, y\in V_{i}\cap W^s_{\sigma_i}$ the sets $B^s_{\sigma_i,x}=\pi_{\sigma_i,u}^{-1}(x), B^u_{\sigma_i,y}=\pi_{\sigma_i,s}^{-1}(y)$ are smoothly embedded compact balls of dimension $4-i, i$, respectively that intersect each other  transversely;
\item $B^s_{\sigma_i,\sigma_i}\subset W^s_{\sigma_i}$, $B^u_{\sigma_i,\sigma_i}\subset W^u_{\sigma_i}$;
\item $f^{t}(B^s_{\sigma_i,x})\subset B^s_{\sigma_i,f^{t}(x)}$, $f^{t}(B^u_{\sigma_i,y})\supset B^u_{\sigma_i,f^{t}(y)}$.
\end{enumerate}
\end{definition}

The mappings $\pi_{\sigma_i,u}, \pi_{\sigma_i,s}$ define a direct product structure $B^{u}_{\sigma_i, \sigma_i}\times B^s_{\sigma_i, \sigma_i}$ in a neighborhood of $V_{i}$, that is, each neighborhood of $V_i$ is an $i$-handle. As before, we will denote by $P_i$ the foot of the handle $V_i$ and by $Q_i$ the closure of the complement to  $P_i$ in $\partial V_i$.

\begin{definition} Let $f^t\in G_{1,2,k}$. Canonical neighborhoods $V_1, V_2$ of saddle equilibria $\sigma_1, \sigma_2\in \Omega_{f^t}$ are called consistent if either $V_1\cap V_2=\emptyset$ or the following inclusions hold for any point $p\in V_1\cap V_2$ (see Fig.~\ref{agr},b))
$$B^u_{\sigma_1, \pi_{\sigma_1,s}(p)}\subset B^u_{\sigma_2, \pi_{\sigma_2,s}(p)}, B^s_{\sigma_2, \pi_{\sigma_2,u}(p)}\subset B^s_{\sigma_1, \pi_{\sigma_1,u}(p)}.$$
\end{definition}

\begin{propos}\label{Step1} For any flow $f^t\in G_{1,2,k}$, there exists a handle decomposition  of the ambient  manifold $M^4$ such that each handle contains a unique equilibrium state of  $f^t$ and the index of any  handle  is equal to the Morse index of the corresponding equilibrium state. Moreover,
handles of indices 1 and 2 are consistent canonical neighborhoods of saddle equilibria.
\end{propos}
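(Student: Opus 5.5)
We will build the decomposition from the energy function $\varphi$ of Proposition~\ref{SM}, keeping its $0$- and $4$-handles and replacing the handles around $\sigma_1,\sigma_2$ by canonical neighborhoods built from the Morse charts, then making them consistent.

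\emph{Step 1 (canonical neighborhoods).} In the Morse coordinates $y_1,\dots,y_4$ of item 4 of Proposition~\ref{SM}, we first use the Grobman--Hartman theorem (or a smooth linearization where possible) to conjugate $f^t$ near $\sigma_i$ to a linear saddle flow. For the linear flow, the neighborhood $\{|y^u|\le\varepsilon,\ |y^s|\le\varepsilon\}$ with the coordinate projections $\pi_{\sigma_i,u}$, $\pi_{\sigma_i,s}$ satisfies items 1--3 of Definition~\ref{can}: the fibers are coordinate discs, they are transverse, and they are invariant in the required sense because the linear flow preserves the product structure. We then pull this back, shrinking so that the foot $P_i$ lies in $\Sigma_{c_i}$ and $Q_i$ lies in $\Sigma_{c_{i+1}}$. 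We do this by redefining the level sets of $\varphi$ near $\sigma_i$ along the flow: every trajectory crossing $\partial V_i$ through $P_i$ or $Q_i$ is transverse to it, so we can modify $\varphi$ in a collar so that $P_i$ and $Q_i$ become pieces of regular levels. Then $M_{c_{i+1}}=M_{c_i}\cup V_i$ is the handle decomposition required, and $M_{c_1}$ and the complement of $M_{c_3}$ are the $0$- and $4$-handles, as in Proposition~\ref{hd}.

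\emph{Step 2 (consistency).} This is the main obstacle, since the fibrations of $V_1$ and $V_2$ must be compatible along the heteroclinic curves, where $V_1\cap V_2\neq\emptyset$. We follow the standard scheme of Palis--Smale (the $\lambda$-lemma and compatible systems of tubular families). First we fix $V_2$. The stable foliation $\{B^s_{\sigma_2,x}\}$ is carried forward by the flow: the union $W^s_{\sigma_1}\cap\pi^{-1}_{\sigma_2,u}(\cdot)$ near $\lambda_u$ transports along trajectories to a neighborhood of $W^u_{\sigma_2}\cap V_1$. By the $\lambda$-lemma, the images $f^t(B^s_{\sigma_2,x})$ for large $t$ are $C^1$-close to discs of $W^s_{\sigma_1}$-type near $\sigma_1$. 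We then construct $V_1$'s $3$-dimensional stable fibers $B^s_{\sigma_1,\cdot}$ so that on the part of $V_1$ swept by the flow from $V_2$ they contain the transported fibers $B^s_{\sigma_2,\cdot}$. We extend to the rest of $V_1$ by the tubular-family extension lemma, using a partition of unity on the fundamental domain $\Lambda_s\times[t_0,t_0+1]$ and then saturating by the flow.

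Dually, we adjust the unstable fibers of $V_2$ using the reversed flow $f^{-t}$ and $V_1$'s one-dimensional unstable fibers, so that $B^u_{\sigma_1,\cdot}\subset B^u_{\sigma_2,\cdot}$. The two adjustments affect different foliations, so they do not interfere. Shrinking $V_1$ and $V_2$ along the flow if needed then keeps the invariance in item 3 and the transversality of the feet to the levels $\Sigma_{c_i}$. This gives consistent canonical neighborhoods, and by Step~1 they are the $1$- and $2$-handles.
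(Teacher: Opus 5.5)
\paragraph{Step 2 adapts the foliations in the wrong direction, and this cannot be fixed.} Take a heteroclinic point $q\in\lambda_u\cap\Lambda_s$ (such points exist for $k=4$ by Corollary~\ref{nmb_of_het}). It lies in $V_1\cap V_2$, because $Q_1$ and $P_2$ are neighborhoods of $\Lambda_s$ and $\lambda_u$ in $\Sigma$. Since $q\in W^s_{\sigma_1}\cap W^u_{\sigma_2}$, we have $\pi_{\sigma_1,u}(q)=\sigma_1$ and $\pi_{\sigma_2,s}(q)=\sigma_2$. Consistency at $q$ therefore reads $B^s_{\sigma_2,q}\subset B^s_{\sigma_1,\sigma_1}\subset W^s_{\sigma_1}$ and $B^u_{\sigma_1,q}\subset B^u_{\sigma_2,\sigma_2}\subset W^u_{\sigma_2}$. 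In other words, the stable disc of $\sigma_2$ at $q$ must lie in $\Lambda_s$, and the unstable arc of $\sigma_1$ at $q$ must lie in $\lambda_u$. This is exactly what the proof of Proposition~\ref{local_conj} uses.

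The central fibers $W^s_{\sigma_1}\cap V_1$ and $W^u_{\sigma_2}\cap V_2$ are not yours to choose. If you first fix the stable foliation of $V_2$, for instance from a linearization at $\sigma_2$ that knows nothing about $W^s_{\sigma_1}$, its disc at $q$ will generically not lie in $\Lambda_s$, and no choice of stable fibers of $V_1$ can contain it. Dually, fixing the unstable arcs of $V_1$ and then adjusting $V_2$ fails at $q$ for the same reason. The order must be reversed: build the stable foliation of the downstream saddle $\sigma_1$ and the unstable foliation of the upstream saddle $\sigma_2$ first, then adapt the other two to them.

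Your $\lambda$-lemma claim is also reversed. Forward images of a $2$-disc crossing $W^s_{\sigma_1}$ accumulate on $W^u_{\sigma_1}$, not on $3$-discs close to $W^s_{\sigma_1}$.

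The paper proceeds the right way round, in four steps:
\begin{enumerate}
\item It pulls back by $f^{-t}$ the attaching $3$-balls of $H_1$ in $\partial H_0$. These become $C^1$-close to $W^s_{\sigma_1}\cap H_1$ and give an invariant stable foliation of $\sigma_1$ whose traces foliate $Q_1$ by spheres parallel to $\Lambda_s$.
\item It pushes forward by $f^t$ the unstable $2$-discs of $H_2$ through $W^s_{\sigma_2}\cap\partial H_4$; their boundary circles foliate $P_2$.
\item It extends the resulting arcs on $Q_1\cap P_2$ to a foliation of $Q_1$ transverse to the spheres, and pushes these arcs forward. This yields the unstable fibers of $\sigma_1$, which automatically lie in those of $\sigma_2$.
\item Symmetrically, it chooses the stable discs of $\sigma_2$ on $P_2$ inside the spheres of $Q_1$ and pulls them back.
\end{enumerate}

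\paragraph{Step 1 also does not deliver what Definition~\ref{can} requires.} Grobman--Hartman gives only a $C^0$ conjugacy. Your fibers are therefore topological discs, not smoothly embedded, mutually transverse balls, and not the $C^1$ discs a $\lambda$-lemma needs. Smooth linearization is not available in general. The paper never linearizes; smoothness comes from applying the $\lambda$-lemma to smooth discs.

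Moreover, $P_i\subset\Sigma_{c_i}$ together with $Q_i\subset\Sigma_{c_{i+1}}$ is impossible. The set $P_i\cap Q_i=\partial P_i=\partial Q_i\neq\emptyset$ would have to lie in two disjoint level sets, so $M_{c_{i+1}}=M_{c_i}\cup V_i$ cannot hold. In the paper:
\begin{itemize}
\item $V_1$ is attached to $H_0$ along $P_1\subset\partial H_0$;
\item the resulting cross-section $\widehat\Sigma=\partial(H_0\cup V_1)$ is only piecewise smooth and is not a level of $\varphi$;
\item $V_2$ is attached along a neighborhood of $\lambda_u$ in $\widehat\Sigma$;
\item the $4$-handle is the complement of $H_0\cup V_1\cup V_2$, which is a ball because its boundary is a global section of the flow on $W^u_\alpha\setminus\alpha$.
\end{itemize}
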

\begin{proof}
We prove the proposition for the case $k=4$ (the proof for $k=5$ is similar).
From Proposition~\ref{hd} it follows that the manifold $M^4$ admits a handle decomposition $H_0\cup H_1\cup H_2\cup H_4$ such that each handle $H_i$ is a compact neighborhood of an equilibrium state of the flow $f^t$ of the corresponding index. By definition, $\Sigma=\partial (H_0\cup H_1)$.

Denote by $e_i: \mathbb B^i\times \mathbb B^{n-i}\to H_i$ the diffeomorphism defining the handle structure. For a point $p\in H_i$, denote by $\widetilde B^s_{i,p} ( \widetilde B^u_{i,p})$ the leaf of the foliation $e_i(\{x\}\times \mathbb B^{4-i})$  ($e_i(\mathbb B^{i}\times \{y\}$, respectively) passing through $p$. {Note that $\widetilde B^u_{i,\sigma_i}=W^u_{\sigma_i}\cap H_i, \widetilde B^s_{i,\sigma_i}=W^s_{\sigma_i}\cap H_i$}.

Let $\widetilde P_2, \widetilde Q_1$ denote the tubular neighborhoods of the knot $\lambda_u$ and the sphere $\Lambda_s$ in $\Sigma$ that belong to the boundary  of the handles $H_2,H_1$, respectively.
We denote by 
 $\widetilde B^s_{1,p_+},\widetilde B^s_{1,p_-}\in \partial H_0$  smoothly embedded in $\partial H_0$  3-balls that are  connected components of  $H_0\cap H_1$, and set $p_+=\widetilde B^s_+\cap W^u_{\sigma_1}, p_-=\widetilde B^s_-\cap W^u_{\sigma_1}$. Let  $\varepsilon_{0}>0$ be a number such that for any smooth $\varepsilon_0$-close to $\widetilde B^s_{1,\sigma_1}$ 3-ball $B^s_p$ (in the $C^1$ topology) the intersections $B^s_p\cap \widetilde B^u_{1,\sigma_1}$, $B^s_p\cap \Sigma$ are transversal. It follows from the classical $\lambda$-lemma (see~[\onlinecite{PaMe}, Lemma 7.1 and the remark in $\S 7$ of Chapter 2]) that there exists $T(\varepsilon_1)>0$ such that for all $t>T(\varepsilon_0)$ the connected component $B^s_{1,f^{-t}(p_{\pm})}$ of the set $f^{-t}(\widetilde B^s_{1,p_\pm})$ containing the point $f^{-t}(p_\pm)$ is a 3-ball $\varepsilon_0$-close to $\widetilde B^s_{1,\sigma_1}$. Then the union $V_{1,\varepsilon_0}=\bigcup\limits_{t\geq T(\varepsilon_0)+1} (\widetilde B^s_{1,f^{-t}(p_\pm)})\cup \widetilde B^s_{1,\sigma_1})$ forms a compact neighborhood of the point $\sigma_1$, foliated by smoothly embedded 3-balls transversal to $\widetilde B^u_{1,\sigma_1}$ and this foliation satisfies item 3 of the Definition~\ref{can}. Without loss of generality, assume that $\partial \widetilde B^s_{1,p_\pm}\subset \Sigma$ (if this is not the case, then replace each ball $\widetilde B^s_{1,p_\pm}$ with a ball that is a connected component of the intersection $\widetilde B^s_{1,p_\pm}\cap H_1$ containing the point $f^{-t}(p_\pm)$).
Set $Q_1=\partial V_{1,\varepsilon_0}\cap \Sigma$. The boundaries of the balls $B^s_{1,f^{-t}(p_{\pm})}$ form a two-dimensional foliation of the set $Q_1$ transverse to the knot $\lambda_u$.

Now let $\varepsilon_{4}>0$ such that for any smooth $\varepsilon_4$-close to $\widetilde B^u_{2,\sigma_2}$ 2-ball $B^u_p$ the intersection $B^u_p\cap \widetilde B^s_{2,\sigma_2}$ is transversal.
By reducing $\varepsilon_0$, if necessary, we can ensure that the intersection $B^u_p$ with each fiber $B^s_{1,f^{-t}(p)}\subset V_{\varepsilon_0}\cap \Sigma$ of the foliation constructed in the previous paragraph is also transversal.
Applying the $\lambda$-lemma again, we find $T(\varepsilon_4)>0$ such that for any point $z\in W^s_{\sigma_2}\cap \partial H_4$ and for all $t>T(\varepsilon_4)$, the connected component $B^u_{2,f^t(z)}$ of the set $f^{t}(\widetilde B^u_{2,z})$ containing the point $f^{t}(z)$, is a 2-ball $\varepsilon_4$-close to $\widetilde B^u_{2,\sigma_2}$.
Then the collection $V_{2,\varepsilon_4}=\bigcup\limits_{t\geq T(\varepsilon_4)+1,z\in W^s_{\sigma_2}\cap \partial H_4} (B^u_{1,f^t(z)})\cup \widetilde B^u_{2,\sigma_2})$ forms a compact neighborhood of $\sigma_2$, fibered by smoothly embedded 2-balls transversal to $\widetilde B^u_{2,\sigma_2}$ and to each fiber $\{\partial B^s_{1,f^{-t}(p)}\}$ of the foliation constructed in the previous step, and this foliation satisfies item 3 of Definition~\ref{can}.

The boundaries of the 2-balls $B^u_{2,f^t(z)}$ define a one-dimensional foliation in the intersection $V_{2,\varepsilon_4}\cap Q_1$ that is transversal to the two-dimensional foliation of the annulus $Q_1$ constructed in the previous paragraph. The connected components of the set $Q_1\cap V_{2,\varepsilon_4}$ are tubular neighborhoods of compact smooth arcs $Q_1\cap \lambda_u$ regularly embedded in $Q_1$. Since the tubular neighborhood is unique, it follows that the one-dimensional  foliation defined on $Q_1\cap V_{2,\varepsilon_4}$ extends to a one-dimensional foliation on the whole of $Q_1$ that is transversal to the previously defined two-dimensional foliation. Let $D^u_x$ denote the fiber of this one-dimensional foliation passing through the point $x\in \Lambda_s$.

Let $\varepsilon_{1}>0$ be such that for any smooth 1-ball $B^u_p$ close to $\widetilde B^u_{1,\sigma_1}$ the intersection $B^u_p\cap \widetilde B^s_{1,\sigma_1}$ is transverse. By reducing $\varepsilon_0$, if necessary, we can ensure that the intersection of $B^u_p$ with each fiber $B^s_{1,f^{-t}(p)}\subset V_{\varepsilon_0}$ of the two-dimensional foliation constructed above is also transverse. Applying the $\lambda$-lemma, we find $T(\varepsilon_1)>0$ such that for any point $x\in \Lambda_s$ and for all $t>T(\varepsilon_1)$, the connected component $B^u_{1,f^t(x)}$ of the set $f^{t}(D^u_{x})$ containing the point $f^{t}(x)$ is a 1-ball $\varepsilon_1$-close to $\widetilde B^u_{1,\sigma_1}$. Then the collection $V_{\varepsilon_1}=\bigcup\limits_{t\geq T(\varepsilon_1)+1} (B^u_{1,f^t(x)})\cup \widetilde B^u_{1,\sigma_1})$ forms a compact neighborhood of $\sigma_1$, foliated by smoothly embedded 1-balls transversal to $\widetilde B^s_{1,\sigma_1}$ and to $\{B^s_{1,f^{-t}(x)}\}$, and this foliation satisfies the Definition~\ref{can}. Set $V=V_{\varepsilon_0}\cap V_{\varepsilon_1}$, choose $T>0$ such that $f^T(V)\cap H_0\subset (\widetilde B^s_+\cup \widetilde B^s_-)$, $f^T(V)\cap \Sigma\subset Q_1$, $f^T(V)\subset {\rm int}\,(H_0\cup H_1)$,
and set $V_{\sigma_1}=f^T(V)\cap H_1$. Since any iteration of a canonical neighborhood is also a canonical neighborhood, and the intersection of $V_{\sigma_1}$ with $\partial H_0, \Sigma$ consists of leaves belonging to the constructed $f^t$-invariant foliations, $V_{\sigma_1}$ is a canonical neighborhood of the saddle $\sigma_1$.

Similarly, we define an $f^t$-invariant foliation $\{B^s_{2,\sigma_2}\}$ of a neighborhood $V_{2,\varepsilon_4}$ of the saddle equilibrium state $\sigma_2$ that is transverse to the foliation $\{B^u_{2,\sigma_2}\}$, and define a canonical neighborhood $V_{\sigma_2}$ that is consistent with $V_{\sigma_1}$ and has a non-empty intersection with $\Sigma$ and $\partial H_0$. The union $N=H_0\cup V_{\sigma_1}\cup V_{\sigma_2}$ is a manifold with boundary $\partial N$ lying in $W^u_\alpha\setminus \alpha$ and being a global secant for the flow $f^t|_{W^u_{\alpha}\setminus \alpha}$. It follows that $\partial N$ is a three-dimensional sphere bounding the ball $B^4_\alpha\subset W^u_\alpha$ (see, for example,~[\onlinecite[Proposition~10]{GuSa25}]). We choose this ball as a handle of index $4$. Then the union $H_0\cup V_{\sigma_1}\cup V_{\sigma_2}\cup B^4_\alpha$ is the desired handle decomposition of the manifold $M^4$.
\end{proof}

\subsection{Local topological conjugacy of flows $f^t, {f'}^t\in G_{1,2,k}$ in consistent canonical neighborhoods of saddle equilibria}

In this section, Proposition~\ref{local_conj} constructs a homeomorphism that conjugates flows $f^t, {f'}^t$ in the union of consistent canonical neighborhoods of their saddle equilibria.

Let $\widehat \Sigma$ be the boundary of the union of handles of indices 0,1 in the handle decomposition described in Proposition~\ref{Step1}, $\widehat \Lambda_s=\widehat \Sigma\cap W^s_{\sigma_1}, \widehat \lambda_u=\widehat \Sigma\cap W^u_{\sigma_2}$. Then $\widehat \Sigma$ is a piecewise smooth global cross-section (in the sense of definition~\ref{seq}) for the restriction of $f^t$ to the set $M^4\setminus ({\rm cl}\, W^u_{\sigma_1}\cup {\rm cl}\, W^s_{\sigma_2})$. It follows from Proposition~\ref{indep} that the set $\{\widehat \Sigma, \widehat \Lambda_s, \widehat \lambda_u\}$ is a scheme of $f^t$ equivalent to the scheme $\{\Sigma, \Lambda_s, \lambda_u\}$. In the following constructions of this section, we will use the set $\{\widehat \Sigma, \widehat \Lambda_s, \widehat \lambda_u\}$ as the flow scheme, omitting the symbol $\ \widehat{}\ $ in the notations.

The consistent canonical neighborhoods, projections, and other objects for the saddle equilibrium state $\sigma'_i$ of the flow ${f'}^t$, similar to those constructed above for the flow $f^t$, will be denoted by $V'_{i}$, $\pi_{\sigma'_i,u}: V'_{i}\to W^u_{\sigma'_i}, \pi_{\sigma'_i,s}: V'_{i}\to W^s_{\sigma'_i}$, etc.

Note that the components $P_2, Q_2$ of the boundary of the handle $V_2$ are solid tori. Recall that the {\it meridian} of a solid torus $P$ is a simple closed curve $\mu\in \partial P$ that is non-homotopic to zero in $\partial P$ and bounds a disk in $P$.

\begin{propos}\label{local_conj}
Let $\{V_1,V_2\}, \{V'_1, V'_2\}$ be pairs of consistent canonical neighborhoods of saddle equilibria of flows $f^t, {f'}^t\in G_{1,2,k}$ whose schemes are equivalent via the homeomorphism $h:\Sigma\to \Sigma'$. Then there exist homeomorphisms $\chi_{1}: V_{1}\to V'_{1}, \chi_{2}: V_{2}\to V'_{2}$ with the following properties:

\begin{enumerate}
\item $\chi_{i}f^t={f'}^t\chi_{i}$ for all $t$ for which the right-hand and left-hand sides of the equality are defined;
\item $\chi_2(p)=\chi_1(p)$ for all points $p\in V_1\cap V_2$;
%\item $\chi_{i}(\lambda_u\cap \Lambda_s)=\lambda'_u\cap \Lambda'_s$;
\item $\chi_{2}$ maps the meridians of the solid tori $P_2, Q_2$ to the meridians of the solid tori $P'_2$, $Q'_2$, respectively.
%\item commutation with projections ??
\end{enumerate}
\end{propos}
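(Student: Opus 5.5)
Using Proposition~\ref{Step0}, we start from a homeomorphism $\eta:\Sigma\to\Sigma'$ with $\eta(\Lambda_s)=\Lambda'_s$, $\eta(\lambda_u)=\lambda'_u$, $\eta(N_{\Lambda_s})=N_{\Lambda'_s}$, $\eta(N_{\lambda_u})=N_{\lambda'_u}$ (and $\eta(\widetilde\lambda_u)=\widetilde\lambda'_u$ when $k=5$). Replacing $V_1,V_2$ by suitable shifts along the flow, we may assume $Q_1=V_1\cap\Sigma=N_{\Lambda_s}$ and $P_2=V_2\cap\Sigma=N_{\lambda_u}$. By the consistency of the canonical neighborhoods, $N_{\Lambda_s}\cap N_{\lambda_u}$ then consists of the $2$-disk by arc boxes described before Proposition~\ref{Step0}. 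These boxes are traces on $\Sigma$ of the fibers $B^s_{\sigma_1,\cdot}$ (intersected with $\Sigma$) and of the fibers $B^u_{\sigma_2,\cdot}$. We further adjust $\eta$ on $N_{\Lambda_s}\cup N_{\lambda_u}$ so that it maps the $2$-dimensional foliation $\{\partial B^s_{\sigma_1,x}\cap\Sigma\}$ on $Q_1$ and the $1$-dimensional foliation $\{B^u_{\sigma_2,y}\cap\Sigma\}$ on $P_2$ to the corresponding foliations for ${f'}^t$. This is possible by the same product-structure argument used in the construction of $\eta$, since both foliations are the product foliations of tubular neighborhoods.

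\textbf{Construction of $\chi_1$.} Every point $p\in V_1\setminus W^u_{\sigma_1}$ lies on a trajectory that either meets $Q_1$ (backward or forward) or meets the foot $P_1\subset\partial H_0$. We define $\chi_1$ first on $W^s_{\sigma_1}\cap V_1$ by $\chi_1(f^t(x))={f'}^t(\eta(x))$ for $x\in\Lambda_s$, and on $W^u_{\sigma_1}\cap V_1$, which is an arc, by any conjugacy of the one-dimensional flows fixing $\sigma_1$. A point $p\in V_1$ off the invariant manifolds is determined by the pair $(\pi_{\sigma_1,s}(p),\pi_{\sigma_1,u}(p))$. Because the two foliations are transverse and $f^t$-invariant, we set $\chi_1(p)$ to be the unique point of $B^u_{\sigma'_1,\chi_1(\pi_{\sigma_1,s}(p))}\cap B^s_{\sigma'_1,\chi_1(\pi_{\sigma_1,u}(p))}$. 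This is the standard Grobman--Hartman/Palis-type conjugacy in canonical coordinates, and it commutes with the flows by property 3 of Definition~\ref{can}. Continuity at $W^u_{\sigma_1}\cup W^s_{\sigma_1}$ follows from the product structure.

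\textbf{Construction of $\chi_2$.} We build $\chi_2$ in the same way. On $W^u_{\sigma_2}\cap V_2$ we define $\chi_2(f^t(y))={f'}^t(\eta(y))$ for $y\in\lambda_u$; on the disk $W^s_{\sigma_2}\cap V_2$ we take an arbitrary conjugacy of the radial flows. The key requirement is that on the heteroclinic region $V_1\cap V_2$ the value of $\chi_2$ agrees with $\chi_1$. By consistency, for $p\in V_1\cap V_2$ the fiber $B^u_{\sigma_1,\cdot}$ through $p$ lies inside $B^u_{\sigma_2,\cdot}$, and $B^s_{\sigma_2,\cdot}$ lies inside $B^s_{\sigma_1,\cdot}$. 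So we first define $\chi_2$ on the $\sigma_2$-stable leaves meeting $V_1$ by restricting $\chi_1$, which is legitimate because $\chi_1$ maps $\sigma_1$-leaves to $\sigma'_1$-leaves. We then extend to the remaining $B^s_{\sigma_2}$-leaves, parametrized by $W^u_{\sigma_2}\cap V_2$. The points where this extension is constrained are exactly those of $\lambda_u\cap\Lambda_s$, and there $\chi_1$ and $\eta$ agree by construction.

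The main obstacle is item 3, the meridian condition, which is where a Dehn twist ambiguity appears. The meridian of $P_2=N_{\lambda_u}$ is $\partial$ of a fiber disk, and it is preserved because $\eta$ maps $N_{\lambda_u}$ to $N_{\lambda'_u}$ as a tubular neighborhood. For $Q_2$, however, the meridian is the boundary of $B^u_{\sigma_2,z}$ for $z\in\partial$, and the extension over the leaves of $W^s_{\sigma_2}\cap V_2$ could introduce a twist. We would fix this with Lemma~\ref{dehn}: after defining $\chi_2$ on $\partial Q_2$ away from $V_1$, we correct it by a homeomorphism that is the identity near the core arc and on the outer boundary and sends the image of a longitude arc to the standard one. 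In this way the image of a meridian of $Q_2$ is a meridian of $Q'_2$. We expect that checking that this correction can be performed $f^t$-equivariantly, by doing it on a fundamental domain and spreading it by the flow, and compatibly with $\chi_1$ on $V_1\cap V_2$, will be the most delicate point.
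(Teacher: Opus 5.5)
\textbf{There is a genuine gap at item 2.} Building each $\chi_i$ separately as the product of flow-conjugacies on the unstable and stable disks in the canonical coordinates is the same as in the paper. The problem is how you choose the four factor maps: that choice breaks item 2, and the remedy you sketch does not repair it.

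Here $V_1\cap V_2=Q_1\cap P_2$ is the union of the boxes around the points $p\in\lambda_u\cap\Lambda_s$. On such a box the two product maps coincide if and only if, for every $p$,
\[
\pi_{\sigma'_1,u}\circ h^u_2=h^u_1\circ\pi_{\sigma_1,u}\ \text{on the arc } B^u_{\sigma_1,p}\subset\lambda_u,\qquad \pi_{\sigma'_2,s}\circ h^s_1=h^s_2\circ\pi_{\sigma_2,s}\ \text{on the disk } B^s_{\sigma_2,p}\subset\Lambda_s .
\]
So the constraint is not only at the points of $\lambda_u\cap\Lambda_s$, as you claim. It holds along whole arcs and disks.

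In your choices, $h^u_2$ and $h^s_1$ are the flow-extensions of $\eta|_{\lambda_u}$ and $\eta|_{\Lambda_s}$, while $h^u_1$ and $h^s_2$ are arbitrary flow-conjugacies. Hence $\eta$, read on each arc through $\pi_{\sigma_1,u}$ and on each disk through $\pi_{\sigma_2,s}$, would have to be exactly $h^u_1$ and the radial conjugacy $h^s_2$, the same for all $p$. Making $\eta$ preserve the two foliations only makes these induced maps independent of $p$. It does not make them flow-equivariant, let alone equal to your arbitrary choices.

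Defining $\chi_2$ on the disks meeting $V_1$ by restricting $\chi_1$ and then extending does not escape this. With your product formula, $\chi_2$ reads as the single map $h^s_2$ on every disk $B^s_{\sigma_2,y}\subset P_2$. Even without the product form, continuity of $\chi_2$ on $W^s_{\sigma_2}\cap V_2$ constrains these disk maps near their centres. This is because orbits accumulating on $W^s_{\sigma_2}$ leave $V_2$ through every disk $B^s_{\sigma_2,y}$, $y\in\lambda_u$, close to its centre.

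\textbf{The missing idea is the order of choices used in the paper.}
\begin{enumerate}
\item First fix $h^u_1$ and $h^s_2$, as flow-extensions of boundary maps $g^u_1$ and $g^s_2$.
\item The identities above then force maps $\xi^u_p$ on the arcs $B^u_{\sigma_1,p}$ and $\xi^s_p$ on the disks $B^s_{\sigma_2,p}$.
\item The equivalence of schemes (correspondence of heteroclinic points, their cyclic order on $\lambda_u$, consistency of sides and orientations) lets you extend these to homeomorphisms $g^u_2:\lambda_u\to\lambda'_u$ and $g^s_1:\Lambda_s\to\Lambda'_s$.
\item Only then do you flow-extend to $h^u_2$ and $h^s_1$.
\end{enumerate}
Equivalently, you would have to modify $\eta$ on every box to be precisely this product, which is more than Proposition~\ref{Step0} provides.

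\textbf{Item 3 is not where the difficulty lies.} With $\chi_2=h^u_2\times h^s_2$, the meridian $\partial B^u_{\sigma_2,\sigma_2}\times\{y\}$ of $Q_2$ goes to $\partial {B'}^u_{\sigma'_2,\sigma'_2}\times\{h^s_2(y)\}$. In fact any conjugating homeomorphism $V_2\to V'_2$ carries the exit and entrance sets $P_2,Q_2$ onto $P'_2,Q'_2$, and hence meridians to meridians. The Dehn-twist correction via Lemma~\ref{dehn} is unnecessary here, and, as you suspect, it would endanger flow-equivariance. The twist mismatch you sense is between $\chi_2|_{P_2}$ and $\eta$, and the paper deals with it afterwards, in Proposition~\ref{Step4}.
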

\begin{proof}

We describe the construction of the homeomorphisms $\chi_1, \chi_2$ for the case when $V_1\cap V_2\neq \emptyset$ (in the case $V_1\cap V_2=\emptyset$, the constructions are similar). From the equivalence of the schemes  it follows that there exists a homeomorphism $h:\Sigma\to \Sigma$ such that $h(\Lambda_s)=\Lambda'_s, h(\lambda_u)=\lambda'_u$.

Recall that, by the definition of consistent neighborhoods,
$\lambda_u=\partial B^u_{\sigma_2,\sigma_2},\Lambda_s=\partial B^s_{\sigma_1, \sigma_1}$, and $P_2=\Sigma\cap V_{\sigma_2}, Q_1=\Sigma\cap V_{\sigma_1}$ are tubular neighborhoods of the knote $\lambda_u$ and the sphere $\Lambda_s$ in $\Sigma$, respectively. Similar relations are true for the consistent neighborhoods $V'_1, V'_2$ of the saddle equilibrium states of the flow ${f'}^t$.

Let $g^u_1:\partial B^u_{\sigma_1,\sigma_1}\to \partial {B'}^u_{\sigma'_1, \sigma'_1}$, $g^s_2:\partial B^s_{\sigma_2,\sigma_2}\to \partial {B'}^s_{\sigma'_2, \sigma'_2}$ be  arbitrary homeomorphisms.

We associate a time $t_{x} (t_y)\in \mathbb R$ to any  point $x\in B^u_{\sigma_1, \sigma_1}$ $(y\in B^s_{\sigma_2,\sigma_2})$  such that $f^{t_{x}}(x)\in \partial B^u_{\sigma_1,\sigma_1} (f^{t_{y}}(y)\in \partial B^s_{\sigma_2,\sigma_2}$) and define homeomorphisms  

$$h^u_1: B^u_{\sigma_1,\sigma_1}\to B^u_{\sigma'_1,\sigma'_1},\ h^s_2: B^s_{\sigma_2,\sigma_2}\to B^s_{\sigma'_2,\sigma'_2}$$ by

$$h^u_1(x)=\begin{cases}
{f'}^{-t_{x}}(g^u_1(f^{t_{x}}(x))), x\in B^u_{\sigma_1,\sigma_1}\setminus \sigma_1 \cr
\sigma'_1, x=\sigma_1;
\end{cases}$$

$$h^s_2(y)=\begin{cases}
{f'}^{-t_{y}}(g^s_2(f^{t_{y}}(y))), y\in B^s_{\sigma_2,\sigma_2}\setminus \sigma_2\cr
\sigma'_2, x=\sigma_2;
\end{cases}$$

Let $p\in \lambda_u\cap \Lambda_s$. Then $B^u_{\sigma_1, p}\subset \lambda_u=\partial B^u_{\sigma_2,\sigma_2}, B^s_{\sigma_2, p}\subset \Lambda_s=\partial B^s_{\sigma_1,\sigma_1}$. For each point $p\in \lambda_u\cap \Lambda_s$ we define homeomorphisms

$$\xi^u_p: B^u_{\sigma_1, p}\to B^u_{\sigma'_1, h(p)}, \xi^s_p: B^s_{\sigma_2, p}\to B^s_{\sigma'_2, h(p)}$$ such that

$$h^u_1\pi_{u,\sigma_1}=\pi_{u,\sigma'_1}\xi^u_p,\ h^s_2\pi_{s,\sigma_2}=\pi_{s,\sigma'_2}\xi^s_p.$$

From the equivalence of the schemes it follows that the homeomorphisms $\xi^u_p, \xi^s_p$ extend to homeomorphisms

$$g^u_2:\partial B^u_{\sigma_2,\sigma_2}\to \partial {B'}^u_{\sigma'_2, \sigma'_2}, g^s_1:\partial B^s_{\sigma_1,\sigma_1}\to \partial {B'}^s_{\sigma'_1, \sigma'_1},$$ such that for any point $p\in \lambda_u\cap \Lambda_s$

$$g^u_2|_{B^u_{\sigma_1, p}}=\xi^u_p|_{B^u_{\sigma_1, p}},\
g^s_1|_{B^s_{\sigma_2, p}}=\xi^s_p|_{B^s_{\sigma_2, p}}.$$

Let us extend homeomorphisms $g^u_2, g^s_1$ to homeomorphisms
$$h^u_2: B^u_{\sigma_2,\sigma_2}\to {B'}^u_{\sigma'_2, \sigma'_2}, h^s_1: B^s_{\sigma_1,\sigma_1}\to {B'}^s_{\sigma'_1, \sigma'_1}$$

similar to the construction of homeomorphisms $h^u_1, h^s_2$.

Finally, we define the desired homeomorphisms $\chi_{i}: V_i\to V'_i, i\in \{1,2\},$ by the relation
$$\chi_{i}(z)=\pi_{\sigma'_i,u}^{-1}(h^u(\pi_{\sigma_i,u}(z)))\cap {\pi_{\sigma'_i,s}}^{-1}(h^s(\pi_{\sigma_i,s}(z))),\ z\in V_{i}.$$

Note that the solid tori $P_2, Q_2\subset \partial V_2$ have the structure of direct products $\partial B^u_{\sigma_2, \sigma_2}\times B^s_{\sigma_2, \sigma_2}$ and $B^u_{\sigma_2, \sigma_2}\times \partial B^s_{\sigma_2, \sigma_2}$. Then for an arbitrary pair of points $x\in \partial B^u_{\sigma_2,\sigma_2}$, $y\in \partial B^s_{\sigma_2, \sigma_2}$ the curves $\mu_x=\{x\}\times \partial B^s_{\sigma_2, \sigma_2}$, $\mu_y=\partial B^u_{\sigma_2, \sigma_2}\times \{y\}$ are the meridians of $P_2, Q_2$, respectively. It follows from the construction that the curves $\chi_2(\mu_x)=\mu_{h^u_2(x)}, \chi_2(\mu_y)=\mu_{h^s_2(y)}$ are the meridians of the solid tori $P'_2, Q'_2$, respectively.
\end{proof}

\subsection{End of the proof of the Theorem~\ref{cond}}

\begin{propos}\label{Step4} Let  schemes of flows $f^t, {f'}^t\in G_{1,2,k}$ be equivalent. Then there exists a homeomorphism $H: M^4\to M^4$ such that $Hf^t={f'}^tH$.
\end{propos}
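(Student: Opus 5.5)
The plan is to build $H$ piece by piece, following the handle decomposition $M^4=H_0\cup V_1\cup V_2\cup B^4_\alpha$ from Proposition~\ref{Step1}, and the analogous decomposition for ${f'}^t$. The order is: saddle neighborhoods first, then the sink basins, then the source basin.

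\textbf{Step 1 (saddle neighborhoods).} First I would fix the homeomorphisms $\chi_1,\chi_2$ given by Proposition~\ref{local_conj}, choosing the scheme homeomorphism $h$ so that it also has the properties of $\eta$ from Proposition~\ref{Step0}. By item 2 of Proposition~\ref{local_conj}, $\chi_1$ and $\chi_2$ agree on $V_1\cap V_2$. So they define a conjugating homeomorphism $\chi$ on $V=V_1\cup V_2$.

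\textbf{Step 2 (basins of the sinks).} Next I extend $\chi$ to the sinks' basins.
\begin{itemize}
\item The set $\partial H_0\cap V_1$ is a pair of $3$-balls, each foliated by leaves $B^s_{\sigma_1,x}$. The set $\partial H_0\cap V_2$ is foliated in the same way.
\item The map $\chi$ restricted to $\partial H_0\cap(V_1\cup V_2)$ is a homeomorphism onto the corresponding part of $\partial H'_0$. This boundary is a $3$-sphere, or two $3$-spheres when $k=5$.
\item The complement of $\partial H_0\cap(V_1\cup V_2)$ in $\partial H_0$ is bounded by a sphere, or by a handlebody-type surface. I would show it is a ball or a solid torus whose meridian is the meridian of $Q_2$ pushed along the flow.
\item On this complement, extend $\chi$ using Alexander's theorem in the ball case. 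In the solid torus case, use the meridian-preservation in item 3 of Proposition~\ref{local_conj}.
\item This gives $\chi_0:\partial H_0\to\partial H'_0$. Spread it over $W^s_\omega\setminus\omega$ by the flow, using the time to reach $\partial H_0$, and send $\omega$ to $\omega'$.
\end{itemize}
Continuity at the boundary of $V$ should follow because the foliations of $V$ are $f^t$-invariant, so flow-extension is compatible with $\chi$ on $V$.

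\textbf{Step 3 (source basin).} The same construction is applied to the source. Set $N=H_0\cup V_1\cup V_2$. By Proposition~\ref{Step1}, $\partial N$ is a $3$-sphere and a global cross-section of $W^u_\alpha\setminus\alpha$. The map already defined on $\partial N$ is the restriction of the conjugacy built on $N$, and it extends to all of $W^u_\alpha$ by the flow.

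\textbf{Main obstacle.} I expect the hardest point to be the extension across the complement of the saddle neighborhoods in Step 2. There the map must be controlled on the torus $\partial P_2$, or on $\partial Q_2$ when viewed from the source side. An arbitrary gluing could introduce a Dehn twist, and the extension over the solid torus exists only if meridians go to meridians. My first attempt would be to use item 3 of Proposition~\ref{local_conj} directly, combined with the lightbulb-type triviality of $\lambda_u$ from Lemma~\ref{seq1}. If the induced map still fails on the longitude, I would correct it with a twist supported in a collar, as in Lemma~\ref{dehn}, and then push the correction back into $V_2$ along the flow so that the conjugacy is preserved.
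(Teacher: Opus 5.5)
\textbf{There is a genuine gap in Step~2.} Your overall architecture matches the paper's: a conjugacy $\chi$ on $V_1\cup V_2$ from Proposition~\ref{local_conj}, transport along orbits through a cross-section, and extension to the equilibria. But Step~2 breaks at exactly the point where the equivalence of schemes has to be used.

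The part of $\partial H_0$ not covered by $V_1\cup V_2$ is $\Sigma\setminus{\rm int}\,(Q_1\cup P_2)$, since $\partial H_0\setminus{\rm int}\,P_1=\Sigma\setminus{\rm int}\,Q_1$. For $k=4$ this is the annulus $S^2\times[0,1]$ with tubular neighborhoods of the $b=|\lambda_u\cap\Lambda_s|$ arcs of $\lambda_u\setminus{\rm int}\,Q_1$ removed.
\begin{itemize}
\item Its boundary is a closed surface of genus $b-1$. So it is a ball only for $b=1$, and it is never a solid torus, because $b$ is odd.
\item It need not even be a handlebody. Suppose an arc that returns to the same side of $\Lambda_s$ carries a local trefoil; this is allowed, since $\lambda_u$ stays trivial in $S^2\times S^1$ by the lightbulb theorem. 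Then the fundamental group of the region contains the peripheral $\mathbb Z^2$ of the knot exterior.
\end{itemize}
So neither Alexander's trick nor the meridian preservation of item~3 of Proposition~\ref{local_conj} can produce the extension.

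More fundamentally, no extension built without $h$ on this region can work. The construction of $\chi_1,\chi_2$ uses $h$ only on $\lambda_u\cup\Lambda_s$. That data does not see how the arcs of $\lambda_u$ are knotted in $\Sigma\setminus\Lambda_s$, and this knotting is precisely what separates the countably many classes in Theorem~\ref{realiz}. If your Step~2 worked as written, it would identify flows that Theorem~\ref{realiz} shows to be non-equivalent. The Dehn-twist issue on $\partial P_2$ that you flag as the main obstacle is real, but it is secondary.

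\textbf{The missing idea} is that the map on this region must be the scheme homeomorphism itself, corrected only near $\partial P_2$. The paper does this directly on $\Sigma$:
\begin{itemize}
\item Take $\eta$ from Proposition~\ref{Step0}. The self-map $\psi=\chi_2\eta^{-1}|_{P'_2}$ of the solid torus $P'_2$ preserves the meridian class and the longitude $\widetilde\lambda'_u$, so it is isotopic to the identity on $\partial P'_2$.
\item Spread this isotopy over a collar $N'$ of $\partial P'_2$ whose fibres lie in the spheres foliating $Q'_1$. This gives $\Psi$, and $\theta=\Psi\eta$ agrees with $\chi_2$ on $P_2$ and with $\eta$ away from $P_2\cup\eta^{-1}(N')$. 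All the global knotting information is thus carried by $\eta$.
\item The conjugacy is then $\theta$ transported along orbits crossing $\Sigma$, together with $\chi_i$ inside $V_i$.
\end{itemize}
You can run the same correction on $\partial H_0$ instead, since it shares this region with $\Sigma$. But the map there has to come from $\eta$, not from an abstract extension of boundary data.
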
 We prove the proposition for $k=4$ (the arguments for $k=5$ are similar). From Proposition~\ref{Step1} it follows that the sphere $S^4$ admits a decomposition into handles in which the handles of indices 1 and 2 are compact canonical neighborhoods $V_1, V_2 (V'_1, V'_2)$ of saddle equilibrium states of the flow $f^t ({f'}^t$, respectively). Recall that $\partial V_i=P_i\cup Q_i$, where $P_i$ is the foot of the handle $V_i$. Then $Q_1, P_2$ are tubular neighborhoods of the sphere $\Lambda_s$ and the knot $\lambda_u$ in the characteristic cross-section $\Sigma$.

Denote by $\widetilde \lambda_u$ a simple closed curve belonging to $\partial P_2\cup \partial Q_2$, which is a meridian of the solid torus $Q_2$ (and, therefore, a parallel to the solid torus $P_2$). From Proposition~\ref{Step0} it follows that there exists a homeomorphism $\eta:\Sigma\to \Sigma'$ such that $\eta(Q_1)=Q'_1, \eta(P_2)=P_2'$, $\eta(\Lambda_s)=\Lambda'_s, \eta(\lambda_u)=\lambda'_u$, and $\eta(\widetilde \lambda_u)=\widetilde \lambda'_u$.
We define the homeomorphism $\psi: P'_2\to P'_2$ by the formula $\psi=\chi_2\eta^{-1}|_{P'_2}$. The homeomorphism $\psi$ is a homeomorphism of the solid torus onto itself, and therefore preserves the meridian class. Moreover, $\psi(\widetilde \lambda'_u)=\widetilde \lambda'_u$. This implies that the restriction $\psi|_{\partial P'_2}$ is isotopic to the identity map.

Let $N'$ be the collar of $\partial P'_2$ in $\Sigma\setminus {\rm int}\,P'_2$ such that the fibers of the one-dimensional foliation defining the structure $\partial P'_2\times [0,1]$ on $N'$ lie on the two-dimensional spheres foliating the neighborhood $Q'_1$ of $\Lambda'_s$ (the existence of such a collar is guaranteed by the transversality of the intersection of $\Lambda'_s$ with $\lambda'_u$ and~\cite[Ch.4, Section 6, Exercise~2]{Hi}). Using $N'$ and an isotopy to the identity map, we extend $\psi$ to a map $\Psi:\Sigma'\to \Sigma'$ that is the identity outside $N'\cup Q'_2$ and preserves $Q'_1$. Finally, we define a homeomorphism $\theta: \Sigma\to \Sigma'$ by $\theta=\Psi\eta$.

To each point $z\in S^4\setminus \Omega_{f^t}$ we assign a time $t_z\in \mathbb R$ such that $f^{t_z}(z)\subset \Sigma$ and set

\begin{equation} G(z)=\begin{cases}
{f'}^{-t_z}(\theta(f^{t_z}(z))), z\in \Sigma_{f^t};\cr
{f'}^{-t_z}(h_{\sigma_i,\sigma'_i}(f^{t_z}(z))), z\in V_{\sigma_i}, i\in \{1,2\}.
\end{cases} \label{bij}
\end{equation}

Formula~(\ref{bij}) defines a continuous bijection on the set $S^4\setminus \Omega_{f^t}$, which has a unique continuous extension to $\Omega_{f^t}$ to the desired homeomorphism.

\section{Realization of Topological Equivalence Classes}

Let $\Sigma_k$ denote the direct product of $\mathbb S^2\times \mathbb S^1$ for $k=4$ and the sphere $\mathbb S^3$ for $k=5$; $\Lambda, \lambda\subset \Sigma_k$ be a smoothly embedded two-dimensional sphere and a trivial knot, respectively. We call  the set $\{\Sigma_k, \Lambda, \lambda\}$  an {\it abstract scheme} if $\Lambda, \lambda$ intersect each other  transversely and in the case $k=4$ the sphere $\Lambda$ does not divide $\Sigma_4$.

\begin{lemma}\label{rea} For any abstract scheme $\{\Sigma_k, \Lambda, \lambda\}$
there exists a flow $f^t\in G_{1,2,k}, k\in \{4,5\}$, whose scheme is equivalent to the abstract scheme.  
\end{lemma}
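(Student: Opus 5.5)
The plan is to build $f^t$ by gluing together two gradient-like pieces along $\Sigma_k$: a lower piece carrying the sink(s) and the index-1 saddle, and an upper piece carrying the index-2 saddle and the source. The construction has four steps.

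\emph{Lower piece.} Let $M_-$ be $\mathbb B^4$ with a $1$-handle attached, or two copies of $\mathbb B^4$ joined by a $1$-handle. This gives $\mathbb S^1\times\mathbb B^3$ for $k=4$ and $\mathbb B^4$ for $k=5$, with boundary $\Sigma_k$. On the standard model we take the obvious gradient flow of a Morse function with one minimum (two minima for $k=5$) and one index-1 critical point. The flow enters through $\partial M_-$, and the stable manifold of the saddle meets $\partial M_-$ in the belt sphere $\Lambda_0$ of the handle: $\Lambda_0=\mathbb S^2\times\{x\}$ for $k=4$, and an unknotted equatorial sphere for $k=5$. We then need a diffeomorphism $\phi$ of $\Sigma_k$ with $\phi(\Lambda_0)=\Lambda$.

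\emph{Getting $\phi$.} For $k=5$, the Generalized Schoenflies Theorem shows $\Lambda$ bounds balls on both sides, so such a $\phi$ exists; one can take it to be a diffeomorphism since we are in dimension 3. For $k=4$, lift $\Lambda$ to the cover $\mathbb R^3\setminus\{O\}$ as in the proof of Corollary~\ref{inters1}. Because $\Lambda$ is nonseparating, the lift is a sphere around $O$, and the Annulus Theorem gives a product region between consecutive lifts. Hence $\Sigma_4\setminus\Lambda$ is an open $\mathbb S^2\times(0,1)$, and this yields the required $\phi$. Transport the flow on $M_-$ by $\phi$. We now have a flow whose boundary cross-section is $\Sigma_k$ and whose saddle stable manifold meets it exactly in $\Lambda$.

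\emph{Upper piece.} Since $\lambda$ is a trivial knot, we attach a $2$-handle to $M_-$ along $\lambda$ with the framing that makes the result a $4$-ball with boundary $\mathbb S^3$ after the surgery. For $k=4$, surgery on $\{x\}\times\mathbb S^1\subset\mathbb S^2\times\mathbb S^1$ with the product framing gives $\mathbb S^3$; for $k=5$, $\pm1$-surgery on the unknot gives $\mathbb S^3$. We then cap off with a $4$-handle. The $2$-handle carries the standard saddle flow in Morse coordinates, as in Proposition~\ref{SM}. Its attaching circle is $\lambda$, so the unstable manifold of $\sigma_2$ meets $\Sigma_k$ exactly in $\lambda$. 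The $4$-handle carries a source. The flows on the pieces all cross the gluing hypersurfaces transversally and inward, so they glue after a standard smoothing of the vector field near the gluing surfaces.

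\emph{Checking the result.} The nonwandering set consists of $k$ hyperbolic equilibria, so the flow is gradient-like provided transversality holds. The only nontrivial transversality is between $W^u_{\sigma_2}$ and $W^s_{\sigma_1}$, and it reduces on $\Sigma_k$ to the assumed transversality of $\lambda$ and $\Lambda$. By construction the flow has scheme $\{\Sigma_k,\Lambda,\lambda\}$, and Proposition~\ref{indep} gives independence from the choice of cross-section. The resulting closed manifold is $\mathbb S^4$ or $\mathbb{CP}^2$, consistent with Theorem~\ref{top}.

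The main obstacle is the second step in the case $k=4$: proving that every nonseparating smooth sphere in $\mathbb S^2\times\mathbb S^1$ is carried to a fiber $\mathbb S^2\times\{x\}$ by a diffeomorphism. If the covering argument proves too delicate, I would instead avoid $\phi$ altogether. The idea is to attach the $1$-handle with its belt sphere placed directly on $\Lambda$, by cutting $\Sigma_4$ along $\Lambda$ and recognizing the result as $\mathbb S^2\times[0,1]$ via the Annulus Theorem. A secondary technical point is choosing the $2$-handle framing so that the final boundary really is $\mathbb S^3$.
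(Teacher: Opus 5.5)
Your proposal is correct and follows the paper's construction: linear model flows on the $0$- and $1$-handles, a diffeomorphism of the resulting boundary onto $\Sigma_k$ carrying the belt sphere to $\Lambda$, a $2$-handle attached along the preimage of $\lambda$, and a capping $4$-handle; you also justify two points the paper only asserts, namely the existence of $\phi$ for $k=4$ (via the covering and the Annulus Theorem) and the need for a $\pm1$ framing when $k=5$ (with framing $0$ the new boundary would be $\mathbb S^2\times\mathbb S^1$, not $\mathbb S^3$). One small slip: for $k=5$ the union of $M_-$ with the $2$-handle is a punctured $\mathbb{CP}^2$, not a $4$-ball; only its boundary is $\mathbb S^3$, which agrees with your own closing remark about $\mathbb{CP}^2$.
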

\begin{proof} We define a vector field on the handle $H^4_i=\mathbb B^i\times \mathbb B^{4-i}$   by the system $$\begin{cases}\dot{x}=x,\cr \dot y =-y,\end{cases}$$ where $x\in \mathbb B^i, y\in \mathbb B^{4-i}$, $i\in \{0,\dots, 4\}$. In case $k=4(5)$, we glue the handle $H^4_1$ to one (two, respectively) copies of the handle $H^4_0$ using a diffeomorphism that ensures smooth gluing of the vector fields. By smoothing the resulting manifold in the standard way, we obtain a manifold $Y$ whose boundary is diffeomorphic to $\Sigma_k$, and a vector field on it transversal to $\partial Y$ which has two (three) singular points: one (two, respectively) sinks and a saddle of index 1. The stable manifold of the saddle intersects $\partial Y$ along the two-dimensional sphere $\Lambda_s$. In the case $k=4$ this sphere does not divide $\partial \Sigma$. Then there exists a diffeomorphism $h: \partial Y\to \Sigma_k$ such that $h(\Lambda_s)=\Lambda$. We set $\lambda_u=h^{-1}(\lambda)$. We attach the handle $H^4_2$ to $Y$ by means of  a diffeomorphism that maps the foot sphere of the handle  to the knot $\lambda_u$. Since the knot $\lambda_u$ is trivial, the boundary of the resulting manifold $Z$ is diffeomorphic to the sphere $\mathbb S^3$. We attach the handle $H^4_4$ to $Z$, resulting in a closed smooth manifold $M^4$ and a vector field on it which  induces a flow $f^t\in P$ whose scheme is equivalent to the given abstract scheme.
\end{proof}

\subsection{Proof of Theorem~\ref{realiz}}

The first part of Theorem~\ref{realiz} follows immediately from Lemma~\ref{rea}. We show that for any integer $\gamma \geq 1$, the class $G_{1,2,4}$ contains a countable set of topologically nonequivalent flows with exactly $(2\gamma+1)$ heteroclinic trajectories. To do this, by Lemma~\ref{rea} and Theorem~\ref{cond}, it suffices to provide a countable set of pairwise nonequivalent abstract schemes in each of which the intersection of the knot $\lambda$ and the sphere $\Lambda$ consists of exactly $(2\gamma+1)$ points. Examples of such abstract schemes are shown in Fig.~\ref{diff}.

\begin{figure}
\centering{\includegraphics[width=0.4\textwidth]{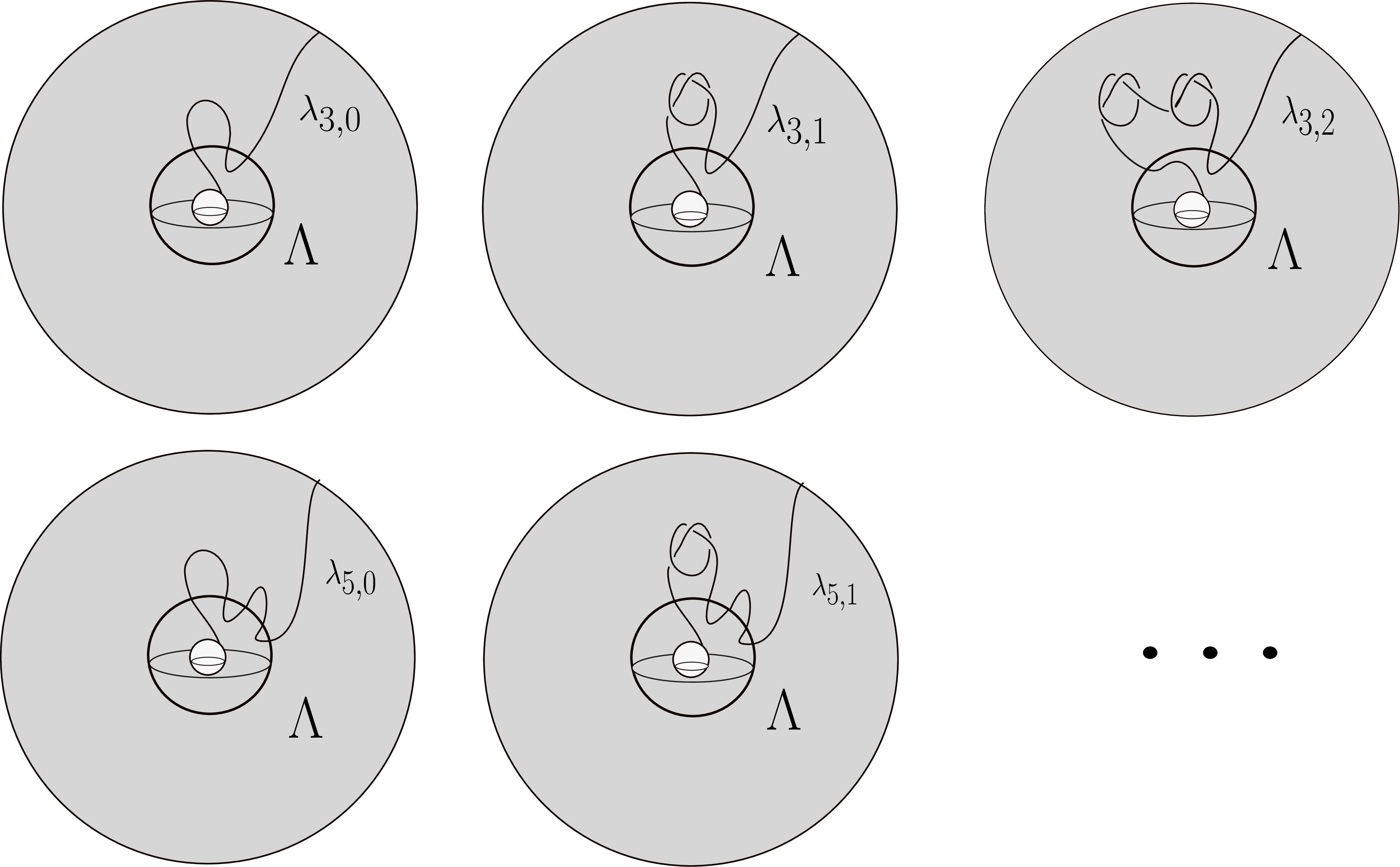}}
\caption{Nontrivial sphere $\Lambda$ and trivial knot $\lambda_{k,i}$ in $\Sigma=\mathbb S^2\times \mathbb S^1$}
\label{diff}
\end{figure}

A  top part of Figure~\ref{diff} shows three cases of mutual arrangement of a homotopically nontrivial two-dimensional sphere $\Lambda$ and trivial knot $\lambda_{3,0}, \lambda_{3,1}, \lambda_{3,2}$ in $\Sigma_4=\mathbb S^2\times \mathbb S^1$, transversely intersecting at three points. To obtain $\mathbb S^2\times \mathbb S^1$, it is necessary to glue together   the boundary spheres of the annulus  highlighted in gray  by a homothety. The series of knots shown in the figure naturally extends to a countable series of trivial knots $\{\lambda_{3, i}\}$. We  show that there is no  homeomorphism $h: \Sigma_4\to \Sigma_4$ such that $h(\Lambda)=\Lambda$, $h(\lambda_{3,i})=\lambda_{3,j}$ for $i\neq j$. Assume the contrary.
\begin{figure}
\centering{\includegraphics[width=0.34\textwidth]{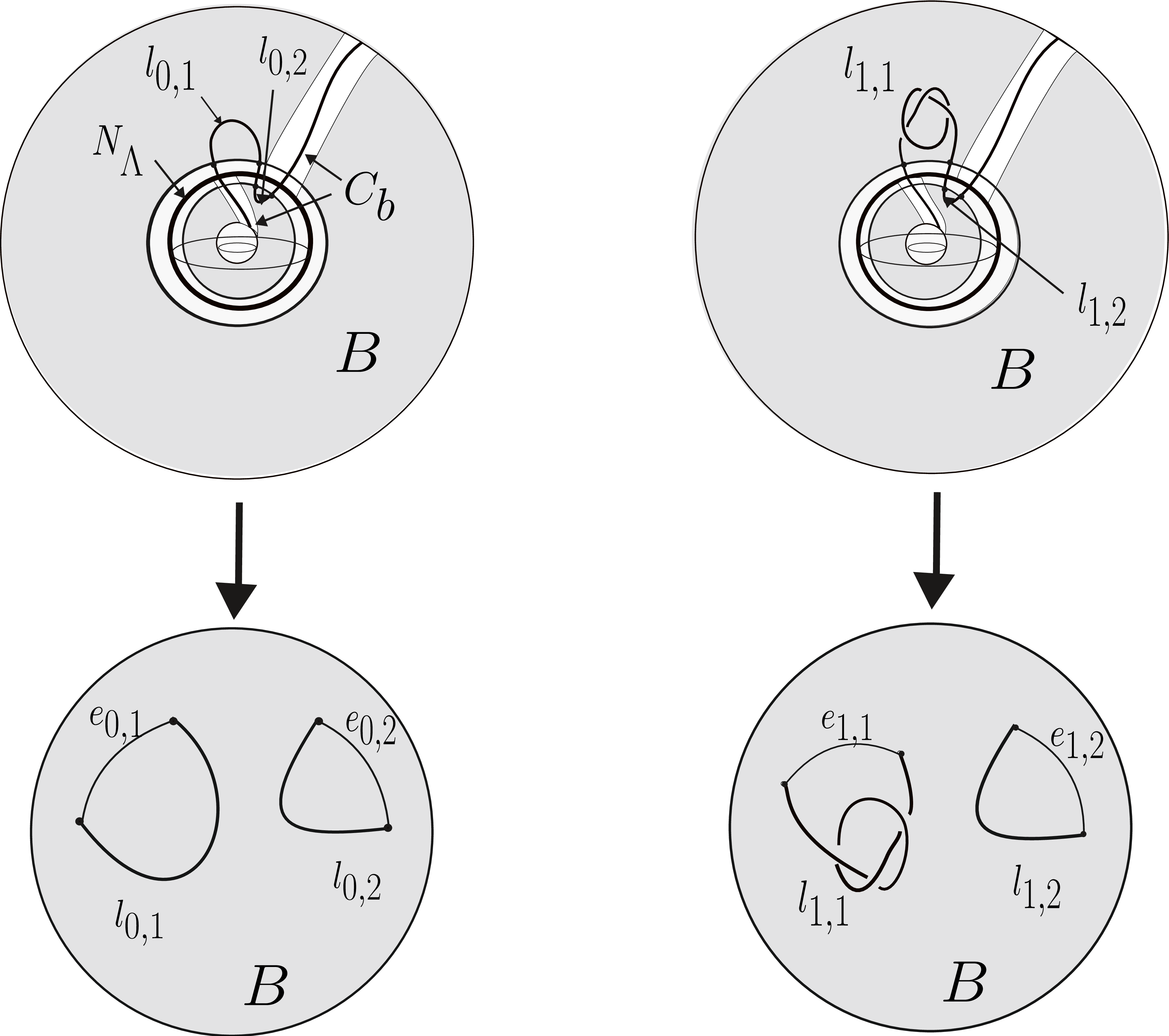}}
\caption{Ball $B=\Sigma\setminus ({\rm int}\, N_{_{\Lambda}}\cup {\rm int}\, C_b)$ and arcs $l_{i,j}, e_{i,j}\subset B$ for $i\in \{0,1\}, j\in \{1,2\}$}
\label{diff+}
\end{figure}

Let $N_{_\Lambda}, N_{\lambda_{3,i}}$ be compact tubular neighborhoods of the sphere $\Lambda$ and the knot $\lambda_{3,i}$ in $\Sigma_4$, respectively. Since the sphere $\Lambda$ does not divide $\Sigma_4$, there exists a compact arc $b_{3,i}\subset \lambda_{3,i}$ whose endpoints lie on the  different connected components of the boundary of the annulus $N_{_\Lambda}$. Let $C_b\subset \lambda_{3,i}$ be a connected component of the set $N_{\lambda_{3,i}}\setminus {\rm int}\,\Lambda$ containing the arc $b_{3,i}$. The set $B=\Sigma_4\setminus {\rm int}\, (N_{_\Lambda}\cup  C_{b})$ is homeomorphic to the ball $\mathbb B^3$. Let $l_{i,1}, l_{i,2}$ denote the connected components of the set $B\cap \lambda_{3,i}$ (see Fig.~\ref{diff+}, which illustrates cases $i\in \{0,1\}$). The endpoints of each of the arcs $l_{i,1}, l_{i,2}$ belong to the boundary $\partial B$ of the ball $B$. We  connect the endpoints of the arcs $l_{i,j}$ by an arc $e_{i,j}\subset \partial B$ such that $e_{i,1}\cap e_{i,2}=\emptyset$ and set  $k_{i,j}=l_{i,j}\cup e_{i,j}$. We glue the ball $\mathbb B^3$ to the ball $B$, arbitrarily identifying the boundary spheres $\mathbb S^2$ and  $\partial B$. As a result, we obtain the sphere $S^3$ and the link $L_{3,i}=\{k_{i,1}, k_{i,2}\}$ in it. By construction, for each $i$, at least one of the knots, say $k_{i,2}$, is trivial, and there exists a three-dimensional ball $D\subset S^3$ such that $k_{i,2}\subset {\rm int}\, D$, $k_{i,1}\cap D=\emptyset$.
The existence of a homeomorphism $h: \Sigma_4\to \Sigma_4$ implies the existence of a homeomorphism $H: S^3\to S^3$ such that $H(k_{i,1})=k_{j,1}$ for $i\neq j$. But the knot $k_{i,1}$ is the connected sum of the trivial knot and $i$ copies of the trefoil. All such knots are pairwise nonequivalent, hence, we get a contradiction.  

The bottom of the figure~\ref{diff} shows how the series of knots $\lambda_{3,i}$ is modified into a series of knots $\lambda_{2\gamma+1,i}$, each of which has a given number $2\gamma+1$ of intersections with the sphere $\Lambda$. Arguments similar to those presented above prove the existence of a countable set of topologically nonequivalent flows from the class $G_{1,2,4}$ with a given number $2\gamma+1$ of heteroclinic intersections. Thus, Theorem~\ref{realiz} is proved.

\begin{acknowledgments}
The work was supported by the Russian Science Foundation (grant No. 23-71-30008).
\end{acknowledgments}

\nocite{*}
\bibliography{aipsamp}% Produces the bibliography via BibTeX.

\end{document}